\newcommand{\D}{\mathcal{D}}              
\newcommand{\K}{\mathbb{K}}               
\newcommand{\C}{\mathbb{C}}               
\newcommand{\R}{\mathbb{R}}               
\newcommand{\Z}{\mathbb{Z}}                
\newcommand{\N}{\mathbb{N}}                
\newcommand{\SE}{\mathrm{SE}}              
\renewcommand{\Re}{\mathrm{Re}\,}          
\renewcommand{\Im}{\mathrm{Im}\,}          
\renewcommand{\L}{\mathcal{L}}             
\newcommand{\A}{\mathcal{A}}             
\newcommand{\B}{\mathcal{B}}             
\newcommand{\diag}{\mathrm{diag}}          
\DeclareMathOperator{\esssup}{ess\,sup\,}
\DeclareMathOperator{\dcup}{\dot{\cup}}
\newcommand{\herm}{{\mathsf{H}}}
\newcommand{\amin}{a_{\mathrm{min}}} 
\newcommand{\amax}{a_{\mathrm{max}}} 
\newcommand{\azero}{a_0}
\newcommand{\aone}{a_1}
\newcommand{\bzero}{b_0}
\newcommand{\begriff}[1]{\textit{#1}}
\renewcommand{\@secnumfont}{\bfseries}
  \def\section{\@startsection{section}{1}%
    \z@{.7\linespacing\@plus\linespacing}{.5\linespacing}%
    {\normalfont\LARGE\bfseries}}
\def\@seccntformat#1{%
  \protect\textup{%
    \protect\@secnumfont
    \expandafter\protect\csname format#1\endcsname 
    \csname the#1\endcsname
    \protect\@secnumpunct
  }%
}
\newcommand{\sect}
{
  \setcounter{equation}{0}
  \setcounter{figure}{0}
  \section
}
\theoremstyle{definition}
\newtheorem{definition}{Definition}[section]
\newtheorem{assumption}[definition]{Assumption}
\newtheorem{example}[definition]{Example}
\newtheorem{remark}[definition]{Remark}
\newtheorem{remarks}[definition]{Remarks}
\theoremstyle{plain}
\newtheorem{theorem}[definition]{Theorem}
\newtheorem{lemma}[definition]{Lemma}
\newtheorem{corollary}[definition]{Corollary}
\begin{document}
\title[Fredholm Properties and $L^p$-Spectra of\\Localized Rotating Waves in Parabolic Systems]{Fredholm Properties and $L^p$-Spectra of\\Localized Rotating Waves in Parabolic Systems}
\setlength{\parindent}{0pt}
\vspace*{0.75cm} 
\begin{center}
\normalfont\huge\bfseries{\shorttitle}\\
\vspace*{0.25cm} 
\end{center}

\vspace*{0.8cm}
\noindent
\begin{minipage}[t]{0.99\textwidth}
\begin{minipage}[t]{0.48\textwidth}
\hspace*{1.8cm} 
{\bf Wolf-J{\"u}rgen Beyn}${}^{ }$\hspace*{-0.025cm}\footnotemark[1]${}^{,}$\footnotemark[3] \\
\hspace*{1.8cm}
Department of Mathematics \\
\hspace*{1.8cm}
Bielefeld University \\
\hspace*{1.8cm}
33501 Bielefeld \\
\hspace*{1.8cm}
Germany
\end{minipage}
\begin{minipage}[t]{0.48\textwidth}
\hspace*{1.8cm}
\textbf{Denny Otten}\footnotemark[2]${}^{,}$\footnotemark[3] \\
\hspace*{1.8cm}
Department of Mathematics \\
\hspace*{1.8cm}
Bielefeld University \\
\hspace*{1.8cm}
33501 Bielefeld \\
\hspace*{1.8cm}
Germany
\end{minipage}
\end{minipage}\\

\footnotetext[1]{e-mail: \textcolor{blue}{beyn@math.uni-bielefeld.de}, phone: \textcolor{blue}{+49 (0)521 106 4798}, \\
                                          fax: \textcolor{blue}{+49 (0)521 106 6498}, homepage: \url{http://www.math.uni-bielefeld.de/~beyn/AG\_Numerik/}.}
\footnotetext[2]{e-mail: \textcolor{blue}{dotten@math.uni-bielefeld.de}, phone: \textcolor{blue}{+49 (0)521 106 4784}, \\
                                 fax: \textcolor{blue}{+49 (0)521 106 6498}, homepage: \url{http://www.math.uni-bielefeld.de/~dotten/}.}
\footnotetext[3]{supported by CRC 701 'Spectral Structures and Topological Methods in Mathematics',  Bielefeld University}

\vspace*{0.8cm} 
\noindent
\hspace*{5.4cm}
Date: \today
\normalparindent=12pt

\vspace{1.05cm} 
\noindent
\begin{center}
\begin{minipage}{0.9\textwidth}
  {\small
  \textbf{Abstract.} 
  In this paper we study spectra and Fredholm properties of Ornstein-Uhlenbeck operators
  \begin{equation*}
    \L v(x) := A\triangle v(x) + \langle Sx,\nabla v(x)\rangle + Df(v_{\star}(x))v(x), \,x\in\mathbb{R}^d,\,d\geqslant 2,
  \end{equation*}
  where $v_{\star}:\R^d\rightarrow\R^m$ is the profile of a rotating wave satisfying $v_{\star}(x)\to v_{\infty}\in\R^m$ as $|x|\to\infty$, 
  the map $f:\R^m\rightarrow\R^m$ is smooth, the matrix $A\in\mathbb{R}^{m,m}$ has eigenvalues with positive real parts and commutes with 
  the limit matrix $Df(v_{\infty})$. The  matrix $S\in\R^{d,d}$ is assumed to be skew-symmetric with eigenvalues
  $(\lambda_1,\ldots,\lambda_d)=(\pm i\sigma_1,\ldots,\pm i \sigma_k,0,\ldots,0)$. The spectra of these linearized operators are crucial 
  for the nonlinear stability of rotating waves in reaction diffusion systems. 
  We prove under appropriate conditions that every $\lambda\in\C$ satisfying the dispersion relation
  \begin{equation*}
    \det\Big(\lambda I_m + \eta^2 A - Df(v_{\infty}) + i\langle n,\sigma\rangle I_m\Big)=0\quad\text{for some $\eta\in\R$ and $n\in\Z^k$}
  \end{equation*}
  belongs to the essential spectrum $\sigma_{\mathrm{ess}}(\L)$  in $L^p$. For values $\Re\lambda$ to the right of the spectral bound for 
  $Df(v_{\infty})$ we show that the operator $\lambda I-\L$ is Fredholm of index $0$, solve the identification problem for the adjoint operator 
  $(\lambda I-\L)^*$, and formulate the Fredholm alternative. Moreover, we show that the set
  \begin{align*}
    \sigma(S)\cup\{\lambda_i+\lambda_j:\;\lambda_i,\lambda_j\in\sigma(S),\,1\leqslant i<j\leqslant d\}
  \end{align*}
  belongs to the point spectrum $\sigma_{\mathrm{pt}}(\L)$  in $L^p$. We determine the associated eigenfunctions and show that they decay exponentially
  in space. As an application we analyze spinning soliton solutions which occur in the Ginzburg-Landau equation and compute their numerical spectra 
  as well as associated eigenfunctions. 
  Our results form the basis for investigating nonlinear stability of rotating waves in higher space dimensions and truncations to bounded domains.
  }
\end{minipage}
\end{center}

\noindent
\textbf{Key words.} Rotating wave, systems of reaction-diffusion equations, Fredholm theory, $L^p$-spectrum, essential spectrum, point spectrum, Ornstein-Uhlenbeck operator.

\noindent
\textbf{AMS subject classification.} 35K57 (35Pxx, 47A55, 35B40, 35Q56, 47N40) 


%
%
\sect{Introduction}
\label{sec:1}

In the present paper we study operators obtained from linearizing reaction-diffusion systems
\begin{equation}
  \begin{aligned}
  \label{equ:1.1}
  \begin{split}
    u_t(x,t) & = A\triangle u(x,t) + f(u(x,t)), &&t>0,\,x\in\R^d,\,d\geqslant 2,\\
      u(x,0) & = u_0(x),                        &&t=0,\,x\in\R^d,
  \end{split}
  \end{aligned}
\end{equation}
where $A\in\R^{m,m}$ has eigenvalues with positive real part, $f:\R^m\rightarrow\R^m$ is sufficiently smooth, $u_0:\R^d\rightarrow\R^m$ are  initial data and 
$u:\R^d\times[0,\infty)\rightarrow\R^m$ denotes a vector-valued solution.

Our main interest is in rotating wave solutions of \eqref{equ:1.1} of the form
\begin{equation}
  \label{equ:1.2}
  u_{\star}(x,t) = v_{\star}(e^{-tS}x),\,t\geqslant 0,\,x\in\R^d,\,d\geqslant 2
\end{equation}
with space-dependent profile $v_{\star}:\R^d\rightarrow\R^m$ and skew-symmetric matrix $S\in\R^{d,d}$. The term $e^{-tS}$ describes 
rotations in $\R^d$, and hence $u_{\star}$ is a solution rotating at constant velocity while maintaining its shape determined by $v_{\star}$. The profile $v_{\star}$ 
is called (exponentially) localized, if it tends (exponentially) to some constant vector $v_{\infty}\in\R^m$ as $|x|\to\infty$. 

Transforming \eqref{equ:1.1} via $u(x,t)=v(e^{-tS}x,t)$ into a co-rotating frame yields the evolution equation
\begin{equation}
  \begin{aligned}
  \label{equ:1.3}
  \begin{split}
    v_t(x,t) =& A\triangle v(x,t) + \left\langle Sx,\nabla v(x,t)\right\rangle + f(v(x,t)), &&t>0,\,x\in\R^d,\,d\geqslant 2,\\
      v(x,0) =& u_0(x),                                                                     &&t=0,\,x\in\R^d.
  \end{split}
  \end{aligned}
\end{equation}
The diffusion and drift term are given by
\begin{equation} 
  \label{equ:1.4}
  A\triangle v(x):=A\sum_{i=1}^{d}\frac{\partial^2v}{\partial x_i^2}(x)\quad\text{and}\quad
  \left\langle Sx,\nabla v(x)\right\rangle:=\sum_{i=1}^{d}\sum_{j=1}^{d}S_{ij}x_j \frac{\partial v}{\partial x_i} (x).
\end{equation}
The pattern $v_{\star}$ itself appears as a stationary solution of \eqref{equ:1.3}, i.e. $v_{\star}$ solves the rotating wave equation
\begin{equation}
  \label{equ:1.5}
  A\triangle v_{\star}(x)+\left\langle Sx,\nabla v_{\star}(x)\right\rangle+f(v_{\star}(x))=0,\,x\in\R^d,\,d\geqslant 2.
\end{equation}
We write \eqref{equ:1.5} as $[\L_0 v_{\star}](x)+f(v_{\star}(x))=0$ with
the Ornstein-Uhlenbeck operator defined by
\begin{equation} 
  \label{equ:1.6}
  \left[\L_0 v\right](x) := A\triangle v(x)+\left\langle Sx,\nabla v(x)\right\rangle,\,x\in\R^d.
\end{equation}
When proving nonlinear stability of the rotating wave, a crucial role is played by the linearized operator 
\begin{equation} 
  \label{equ:1.8}
  \left[\L v\right](x) := \left[\L_0 v\right](x) + Df\left(v_{\star}(x)\right)v(x),\,x\in\R^d.
\end{equation}

The aim of this paper is to analyze Fredholm properties and spectra of the $L^p$-eigenvalue problem associated with the linearization $\L$,
\begin{equation}
  \label{equ:1.9}
  \left[\left(\lambda I-\L\right)v\right](x) = 0,\,x\in\R^d.
\end{equation}
As usual, the $L^p$-spectrum $\sigma(\L)$ of $\L$ is  decomposed into the disjoint union of point spectrum 
$\sigma_{\mathrm{pt}}(\L)$ and essential spectrum $\sigma_{\mathrm{ess}}(\L)$, cf. Definition \ref{def:1.1},
\begin{equation}
  \label{equ:1.10}
  \sigma(\L)=\sigma_{\mathrm{ess}}(\L)\dcup\sigma_{\mathrm{pt}}(\L).
\end{equation}
In Section \ref{sec:2} we evaluate the dispersion relation associated
with the limit operator,
\begin{equation}
  \label{equ:Linfty}
  \L_{\infty}=\L_0 + Df(v_{\infty})
\end{equation}
and show that its solutions belong to
$\sigma_{\mathrm{ess}}(\L)$. For every $\lambda \in \C$ with $\Re \lambda$
larger than $\Re\sigma_{\mathrm{ess}}(\L)$ we prove in Section \ref{sec:4} that the operator $\lambda I-\L$ is Fredholm of
index $0$ in $L^p$-spaces.  Finally,  in Section \ref{sec:3} we compute those eigenvalues on the imaginary axis
which are caused by Euclidean equivariance of the underlying
equation, and we prove their exponential decay in space. The whole
approach makes extensive use of our previous results on the
identification problem and on exponential decay estimates
for the wave itself and for solutions of the linearized equation, 
see \cite{BeynOtten2016a,Otten2014a,Otten2015a}.

We emphasize that the results from Section \ref{sec:2}, \ref{sec:3} and \ref{sec:5} extend results from the PhD thesis \cite{Otten2014}. 
Major novelties of this paper concern the Fredholm properties of the linearized operator in Section \ref{sec:4}.

%
%
\sect{Assumptions and main results}
\label{sec:1b}

\subsection{Assumptions}
\label{subse:1.2}

The following conditions will be needed in this paper and relations among them will be discussed below. The conditions are essential for applying previous
results from  \cite{BeynOtten2016a,Otten2014,Otten2014a,Otten2015a}.

\begin{assumption}
  \label{ass:Assumption1}
  For $A\in\K^{m,m}$ with $\K\in\{\R,\C\}$, $1<p<\infty$ and $q=\frac{p}{p-1}$ consider the conditions
  \begin{flalign}
    &\text{$A$ is diagonalizable (over $\C$)},         \tag{A1}\label{cond:A1} &\\
    &\Re\sigma(A)>0, \tag{A2}\label{cond:A2} &\\
    &\Re\left\langle w,Aw\right\rangle\geqslant\beta_A\;\forall\,w\in\K^m,\,|w|=1\text{ for some $\beta_A>0$,} \tag{A3}\label{cond:A3} &\\
    &\text{There exists $\gamma_A>0$ such that for all $z,w\in\K^m$} \tag{A4${}_p$}\label{cond:A4DC} &\\
    &\quad|z|^2\Re\langle w,Aw\rangle + (p-2)\Re\langle w,z\rangle\Re\langle z,Aw\rangle\geqslant\gamma_A |z|^2|w|^2, \nonumber &\\
    &\text{There exists $\delta_A>0$ such that for all $z,w\in\K^m$} \tag{A4${}_q$}\label{cond:A4DCq} &\\
    &\quad|z|^2\Re\langle w,A^{\herm} w\rangle + (q-2)\Re\langle w,z\rangle\Re\langle z,A^{\herm} w\rangle\geqslant\delta_A |z|^2|w|^2, \nonumber &\\
    &A\text{ is invertible and } \mu_1(A)>\frac{|p-2|}{p} \tag{A5${}_p$}\label{cond:A4}
    \; \text{(to be read as $A>0$ in case $N=1$, $\K=\R$),}  \\    
    &A\text{ is invertible and } \mu_1(A^{\herm})>\frac{|q-2|}{q} \tag{A5${}_q$}\label{cond:A4q}
     \; \text{(to be read as $A^{\herm}>0$ in case $N=1$, $\K=\R$).}
  \end{flalign}
\end{assumption}

Assumption \eqref{cond:A1} is a \begriff{system condition} and ensures that all results for scalar equations can be extended to system cases. This condition 
is independent of all other conditions and is used in \cite{Otten2014,Otten2014a} to derive an explicit formula for the heat kernel 
of $\L_{0}$. A typical case where \eqref{cond:A1} holds, is a scalar complex-valued equation when transformed into a  real-valued system of dimension $2$. 
The \begriff{positivity condition} \eqref{cond:A2} guarantees that the diffusion part $A\triangle$ 
is an elliptic operator. All eigenvalues $\lambda\in \sigma(A)$ of $A$ lie in the open right half-plane $\{\lambda\in\C\mid\Re\lambda>0\}$. 
The \begriff{strict accretivity condition} \eqref{cond:A3} is more restrictive than \eqref{cond:A2}. In \eqref{cond:A3} we use $\left\langle u,v\right\rangle:=u^{\herm} v$ 
with $u^{\herm}:=\bar{u}^{\top}$ to denote the standard inner product on $\K^m$. Recall that condition \eqref{cond:A2} is satisfied iff there exists an inner product
  $\left[\cdot,\cdot\right]$  and some $\beta_A>0$ such that  $\Re\left[w,Aw\right]\geqslant\beta_A$ for all $w\in\K^m$ with $\left[w,w\right]=1$.
 Condition \eqref{cond:A3} ensures that the differential operator $\L_{0}$ 
is closed on its (local) domain $\D^p_{\mathrm{loc}}(\L_0)$, see \cite{Otten2014,Otten2015a}. The \begriff{$L^p$-dissipativity condition} \eqref{cond:A4DC} is 
more restrictive than \eqref{cond:A3} and imposes additional requirements on the spectrum of $A$. This condition originating from \cite{CialdeaMazya2005,CialdeaMazya2009}, 
is used in \cite{Otten2014,Otten2015a} to prove $L^p$-resolvent estimates for $\L_{0}$. A geometric meaning of \eqref{cond:A4DC} can be given in terms of the 
antieigenvalues of the diffusion matrix $A$. In \cite{Otten2014,Otten2015b}, it is proved that condition \eqref{cond:A4DC} is equivalent to the \begriff{$L^p$-antieigenvalue condition} 
\eqref{cond:A4}. Condition \eqref{cond:A4} requires that the \begriff{first antieigenvalue of $A$} (see \cite{Gustafson1968,Gustafson2012}), defined by
\begin{align*}
  \mu_1(A) := \inf\left\{\frac{\Re\left\langle w,Aw\right\rangle}{|w||Aw|}:w\in\K^m,w\neq 0,Aw\neq 0 \right \}
\end{align*}
is bounded from below by a non-negative $p$-dependent constant. Condition \eqref{cond:A4} is also equivalent to the following $p$-dependent upper bound for the 
(\begriff{real}) \begriff{angle of $A$} (cf. \cite{Gustafson1968}),
\begin{align*}
  \Phi_{\R}(A):=\cos^{-1}\left(\mu_1(A)\right)<\cos^{-1}\left(\frac{|p-2|}{p}\right)\in\big(0,\frac{\pi}{2}\big],\quad 1<p<\infty.
\end{align*}
The first antieigenvalue $\mu_1(A)$ can be considered as the cosine of the maximal (real) turning angle of vectors mapped by the matrix $A$. 
Some special cases in which the first antieigenvalue can be given explicitly are treated in \cite{Otten2014,Otten2015b}. 
The \begriff{$L^q$-dissipativity condition} \eqref{cond:A4DCq} for the \begriff{conjugate index} $q=\frac{p}{p-1}$ will be used to
analyze the adjoint operator $(\lambda I-\L)^{*}$. The condition enables us to solve the identification problem for the adjoint operator, which becomes crucial 
when analyzing Fredholm properties of $\lambda I-\L$. Condition \eqref{cond:A4DCq} is more restrictive than \eqref{cond:A3} but equivalent to
 \eqref{cond:A4q}, 
and similar comments apply to the first antieigenvalue $\mu_1(A^{\herm})$ and the (real) angle $\Phi_{\R}(A^{\herm})$. However, we emphasize that $L^p$-dissipativity 
\eqref{cond:A4DC} and $L^q$-dissipativity \eqref{cond:A4DCq} for the conjugate index $q$ are generally independent of each other, even in case $p=q=2$. 
Only in case of Hermitian matrices $A\in\C^{m,m}$ both conditions coincide. 

We continue with the \begriff{rotational condition} \eqref{cond:A5} and a \begriff{smoothness condition} \eqref{cond:A6}.

\begin{assumption}
  \label{ass:Assumption2}
  The matrix $S\in\R^{d,d}$ satisfies
  \begin{flalign}
    &\text{$S$ is skew-symmetric, i.e. $S=-S^T$.}        \tag{A6}\label{cond:A5} &
  \end{flalign}
\end{assumption}

\begin{assumption}
  \label{ass:Assumption3}
  The function $f:\R^m\rightarrow\R^m$ satisfies
  \begin{flalign}
    &f\in C^2(\R^m,\R^m).                             \tag{A7}\label{cond:A6} &
  \end{flalign}
\end{assumption}

Later on in Section \ref{sec:5} we apply our results to complex-valued nonlinearities of the form
\begin{equation} \label{equ:complexversion}
  f:\C^m\rightarrow\C^m,\quad f(u)=g\left(|u|^2\right)u,
\end{equation}
where $g:\R\rightarrow\C^{m,m}$ is a sufficiently smooth function. Such nonlinearities arise for example in Ginzburg-Landau equations, Schr\"odinger equations, 
$\lambda-\omega$ systems and many other equations from physical sciences, see \cite{Otten2014} and references therein. 
Note, that in this case, the function $f$ is not holomorphic in $\C$, but its real-valued version in $\R^2$ satisfies \eqref{cond:A6} if $g$ is in $C^2$. For 
differentiable functions $f:\R^m\rightarrow\R^m$ we denote by $Df$ the \begriff{Jacobian matrix} in the real sense.

\begin{assumption}
  \label{ass:Assumption4}
  For $v_{\infty}\in\R^m$ consider the following conditions:
  \begin{flalign}
    &f(v_{\infty})=0,                                                                     \tag{A8}\label{cond:A7} &\\
    &\text{$A,Df(v_{\infty})\in\R^{m,m}$ are simultaneously diagonalizable (over $\C$),}  \tag{A9}\label{cond:A8} &\\
    &\Re\sigma\left(Df(v_{\infty})\right)<0,                                              \tag{A10}\label{cond:A9} &\\
    &\Re\left\langle w,Df(v_{\infty})w\right\rangle\leqslant-\beta_{\infty}\;\forall\,w\in\K^m,\,|w|=1\text{ for some $\beta_{\infty}>0$.} \tag{A11}\label{cond:A10} &
  \end{flalign}
\end{assumption}

The \begriff{constant asymptotic state condition} \eqref{cond:A7} requires $v_{\infty}$ to be a steady state of the nonlinear equation. The \begriff{system condition} 
\eqref{cond:A8} is an extension of Assumption \eqref{cond:A1}, and
the \begriff{coercivity condition}  \eqref{cond:A10} is again  more restrictive
than the \begriff{spectral condition} \eqref{cond:A9}.

For matrices $C\in\K^{m,m}$ with spectrum $\sigma(C)$ we denote by $\rho(C):=\max_{\lambda\in\sigma(C)}\left|\lambda\right|$ its \begriff{spectral radius} 
and by $s(C):=\max_{\lambda\in\sigma(C)}\Re\lambda$ its \begriff{spectral abscissa} (or \begriff{spectral bound}). 
With this notation, we define the following constants which appear in the linear theory from \cite{BeynOtten2016a,Otten2014,Otten2014a,Otten2015a}:
\begin{equation}
  \begin{aligned}
  \amin :=& \left(\rho\left(A^{-1}\right)\right)^{-1}, &&\amax  := \rho(A),              &&\azero := -s(-A),\\
  \aone :=& \left(\frac{\amax^2}{\amin\azero}\right)^{\frac{d}{2}},                      &&\bzero := -s(Df(v_{\infty})).               && 
  \end{aligned}
  \label{equ:aminamaxazerobzero}
\end{equation}
Recall the relations $0<\azero\leqslant\beta_A$ and $0<\bzero\leqslant\beta_{\infty}$ to the coercivity constants from \eqref{cond:A3} and \eqref{cond:A10}.

The theory in this paper is partially developed for more general differential operators, see \eqref{equ:1.12a} below. For this purpose we transfer Assumption \ref{ass:Assumption4} 
to general matrices $B_{\infty}$. Later on, we apply the results to
$B_{\infty}=-Df(v_{\infty})$.


\begin{assumption}
  \label{ass:Assumption5}
  For $B_{\infty}\in\K^{m,m}$ consider the conditions
  \begin{flalign}
    &\text{$A,B_{\infty}\in\K^{m,m}$ are simultaneously diagonalizable (over $\C$) with transformation $Y\in\C^{m,m}$,}          \tag{A9${}_{B_{\infty}}$}\label{cond:A8B} &\\
    &\Re\sigma(B_{\infty})>0,                     \tag{A10${}_{B_{\infty}}$}\label{cond:A9B} & \\
    &\Re\left\langle w,B_{\infty}w\right\rangle\geqslant\beta_{\infty}\;\forall\,w\in\K^m,\,|w|=1\text{ for some $\beta_{\infty}>0$.} \tag{A11${}_{B_{\infty}}$}\label{cond:A10B} &
  \end{flalign}
\end{assumption}

Finally, let us recall some standard definitions for closed, densely defined operators, see e.g.  \cite{Henry1981}.

\begin{definition}
  \label{def:1.1}
  Let $X$ be a (complex-valued) Banach space, $\A:X\supseteq\D(\A)\rightarrow X$ be a closed, densely defined, linear operator 
  and let $\lambda\in\C$.
  \begin{itemize}[leftmargin=0.43cm]\setlength{\itemsep}{0.1cm}
  \item[a)] The sets $\mathcal{N}(\lambda I-\A):=\{v\in\D(\A):\;(\lambda I-\A)v=0\}$ and $\mathcal{R}(\lambda I-\A):=\{(\lambda I-\A)v:\;v\in\D(\A)\}$ are called 
  the \begriff{kernel} and the \begriff{range of $\lambda I-\A$}, respectively.
  \item[b)] A value $\lambda\in\C$ belongs to the \begriff{resolvent set} $\rho(\A)$ \begriff{of $\A$} if $\left(\lambda I-\A\right):\D(\A)\rightarrow X$ 
  is bijective and has bounded inverse $\left(\lambda I-\A\right)^{-1}:X\rightarrow\D(\A)$. The inverse $\left(\lambda I-\A\right)^{-1}$ is called the 
  \begriff{resolvent of $\A$ at $\lambda$}.
  \item[c)] The set $\sigma(\A):=\C\backslash\rho(\A)$ is called the \begriff{spectrum of $\A$}. An element $\lambda\in\sigma(\A)$ satisfying $(\lambda I-\A)v=0$ for some 
  $0\neq v\in\D(\A)$ is called an \begriff{eigenvalue of $\A$} and $v$ is called an \begriff{eigenfunction of $\A$}. 
  \item[d)] $\lambda\in\sigma(\A)$ is called \begriff{isolated} if there is $\varepsilon>0$ such that $\mu\in\rho(\A)$ for all $\lambda\neq\mu\in\C$ with 
  $|\lambda-\mu|<\varepsilon$.
  $\lambda\in\sigma(\A)$ has \begriff{finite multiplicity} if 
$\cup_{k\in \N}\, \mathcal{N}((\lambda I-\A)^k)$ is finite dimensional.
  The \begriff{point spectrum of $\A$} is defined by
  \begin{align*}
    \sigma_{\mathrm{pt}}(\A):=\{\lambda\in\sigma(\A)\mid\lambda\text{ is isolated eigenvalue of finite multiplicity}\}.
  \end{align*}
  \item[e)] $\lambda\in\C$ is called a \begriff{normal point of $\A$} if $\lambda\in\rho(\A)\cup \sigma_{\mathrm{pt}}(\A)$.
  The \begriff{essential spectrum of $\A$} is defined by
  \begin{align*}
    \sigma_{\mathrm{ess}}(\A)=\left\{\lambda\in\C\mid \lambda\text{ is not a normal point of $\A$}\right\}.
  \end{align*}
  \end{itemize}
\end{definition}

Note that $\sigma(A)=\sigma_{\mathrm{ess}}(A)\dcup\sigma_{\mathrm{pt}}(A)$ according to  Definition \ref{def:1.1}.

\subsection{Outline and main results}
\label{subsec:1.1}
In Section \ref{sec:2} we investigate the essential spectrum $\sigma_{\mathrm{ess}}(\L)$ of $\L$ from \eqref{equ:1.8}, which is determined by the limiting 
behavior of $v_{\star}$ at infinity. By a far-field linearization and an angular Fourier decomposition, bounded eigenfunctions of the problem
\eqref{equ:1.9} are obtained from the $m$-dimensional eigenvalue problem (Section \ref{subsec:2.1})
\begin{equation}
  \label{equ:1.16}
  \Big(\lambda I_m+\eta^2 A+i\langle n,\sigma\rangle I_m-Df(v_{\infty})\Big)z = 0\quad\text{for some $\eta\in\R$ and $n\in\Z^k$},
\end{equation}
with nonzero eigenvalues $\pm i\sigma_1,\ldots,\pm i\sigma_k$ of $S$, $\sigma_1,\ldots,\sigma_k\in\R$ and $1\leqslant k\leqslant\lfloor\frac{d}{2}\rfloor$. 
Obviously, \eqref{equ:1.16} has a solution $0\neq z\in\C^m$ if and only if $\lambda\in\C$ satisfies the \begriff{dispersion relation for localized rotating waves}
\begin{equation}
  \label{equ:1.11}
  \det\Big(\lambda I_m + \eta^2 A - Df(v_{\infty}) + i\langle n,\sigma\rangle I_m\Big)=0
\end{equation}
for some $\eta\in\R$ and $n\in\Z^k$. Defining the \begriff{dispersion set}
\begin{equation}
  \label{equ:1.12}
  \sigma_{\mathrm{disp}}(\L):=\{\lambda\in\C:\;\lambda\text{ satisfies \eqref{equ:1.11} for some $\eta\in\R$ and $n\in\Z^k$}\},
\end{equation}
we show that $\sigma_{\mathrm{disp}}(\L)$ belongs to the essential spectrum $\sigma_{\mathrm{ess}}(\L)$ of $\L$ in $L^p$. 

\begin{theorem}[Essential spectrum at localized rotating waves]\label{thm:EssSpecLRW}
  Let $f\in C^{\max\{2,r-1\}}(\R^m,\R^m)$ for some $r\in\N$ and let the assumptions \eqref{cond:A4DC}, \eqref{cond:A5}, \eqref{cond:A7}, \eqref{cond:A8} 
  and \eqref{cond:A10} be satisfied for $\K=\C$ and for some $1<p<\infty$ with $\frac{d}{p}<r$ if $r\leqslant 2$ and $\frac{d}{p}\leqslant 2$ if $r\geqslant 3$. 
  Moreover, let $\pm i\sigma_1,\ldots,\pm i\sigma_k$ denote the nonzero eigenvalues of $S$. Then, for every $0<\varepsilon<1$ there is a constant $K_1=K_1(A,f,v_{\infty},d,p,\varepsilon)>0$ 
  with the following property: 
  For every classical solution $v_{\star}\in C^{r+1}(\R^d,\R^m)$ of 
  \begin{align*}
    A\triangle v(x)+\left\langle Sx,\nabla v(x)\right\rangle+f(v(x))=0,\,x\in\R^d,
  \end{align*}
  satisfying
  \begin{align*}
    \sup_{|x|\geqslant R_0}\left|v_{\star}(x)-v_{\infty}\right|\leqslant K_1\text{ for some $R_0>0$,}
  \end{align*}
  the dispersion set $\sigma_{\mathrm{disp}}(\L)$ from \eqref{equ:1.12} belongs to the essential spectrum $\sigma_{\mathrm{ess}}(\L)$ of the 
  linearized operator $\L$ from \eqref{equ:1.8} in $L^p(\R^d,\C^m)$, i.e. $\sigma_{\mathrm{disp}}(\L)\subseteq\sigma_{\mathrm{ess}}(\L)$ in $L^p(\R^d,\C^m)$.
\end{theorem}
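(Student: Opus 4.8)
The plan is to show that every $\lambda\in\sigma_{\mathrm{disp}}(\L)$ is not a normal point of $\L$ by constructing, for each such $\lambda$, a singular (Weyl) sequence $(v_n)\subset\D^p(\L)$ with $\|v_n\|_{L^p}=1$ and $\|(\lambda I-\L)v_n\|_{L^p}\to 0$ but with no convergent subsequence in $L^p$. This is the standard route to proving that a point lies in $\sigma_{\mathrm{ess}}$, and it avoids having to analyze Fredholm properties directly. The construction has two ingredients: a profile in the $m$-dimensional target space coming from the dispersion relation, and a spatial ansatz that lives near infinity where $Df(v_{\star}(x))\approx Df(v_{\infty})$ and localizes so as to defeat compactness.

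First I would fix $\lambda\in\sigma_{\mathrm{disp}}(\L)$ and choose $\eta\in\R$, $n\in\Z^k$, and $0\neq z\in\C^m$ solving \eqref{equ:1.16}. The formal plane-wave-type solution of the far-field (constant-coefficient) problem $(\lambda I-\L_\infty)v=0$ is obtained by combining a radial oscillation of frequency $\eta$ with an angular Fourier mode $e^{i\langle n,\varphi\rangle}$ in the $k$ rotation planes of $S$ (this is exactly the angular Fourier decomposition alluded to in Section \ref{subsec:2.1}); write $v_{\mathrm{far}}(x)=w(x)z$, where $w$ is this oscillatory scalar function. The issue is that $v_{\mathrm{far}}$ is neither in $L^p$ nor supported near infinity, so I would multiply it by a cutoff family: pick $\chi\in C_c^\infty(\R^d)$ supported in an annulus, set $\chi_n(x)=\chi(x/n - c_n e)$ for a sequence of centers $c_n e$ receding to infinity in a direction fixed relative to the rotation planes, and put $v_n := \|\chi_n v_{\mathrm{far}}\|_{L^p}^{-1}\chi_n v_{\mathrm{far}}$. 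Two features must be checked: the supports of $v_n$ escape to infinity (hence the sequence has no $L^p$-convergent subsequence — this is the non-normality), and they lie in the region $|x|\geq R_0$ where the hypothesis $\sup_{|x|\geq R_0}|v_\star(x)-v_\infty|\leq K_1$ applies.

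Next I would estimate $(\lambda I-\L)v_n$. Split it as $(\lambda I-\L_\infty)v_n + (Df(v_\infty)-Df(v_\star(x)))v_n$. The first term, by Leibniz, is a sum of terms in which at least one derivative falls on $\chi_n$; each derivative of $\chi_n$ carries a factor $n^{-1}$ (from the Laplacian $n^{-2}$, and the drift term $\langle Sx,\nabla\chi_n\rangle$ contributes $|x|\cdot n^{-1}$, which after the support restriction $|x|\lesssim n$ is $O(1)$ — so here one must be a little careful and choose the scaling of the cutoff so that $|x|/n$ stays bounded on $\mathrm{supp}\,\chi_n$, which is automatic with the annular ansatz above). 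The upshot is that $\|(\lambda I-\L_\infty)v_n\|_{L^p}\to 0$ provided the derivative-of-cutoff terms are negligible relative to $\|\chi_n v_{\mathrm{far}}\|_{L^p}$; this is where the $L^p$-dissipativity hypothesis \eqref{cond:A4DC} and the closedness of $\L_0$ on $\D^p_{\mathrm{loc}}(\L_0)$ are needed to make sense of $v_n\in\D^p(\L)$ and to control the operator. The second term is bounded by $\sup_{x\in\mathrm{supp}\,v_n}|Df(v_\infty)-Df(v_\star(x))|$, which by continuity of $Df$ and smallness of $v_\star-v_\infty$ (quantified through $K_1$, chosen via a modulus-of-continuity argument for $Df$ on a neighborhood of $v_\infty$) is as small as we like uniformly in $n$; this is exactly the role of the explicit constant $K_1(A,f,v_\infty,d,p,\varepsilon)$.

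The main obstacle, as is typical in these Weyl-sequence arguments on exterior domains, is the drift term $\langle Sx,\nabla v\rangle$: unlike the pure Laplacian it is not lower order at infinity (its coefficient grows linearly in $|x|$), so naive translation-and-cutoff does not kill $(\lambda I-\L_\infty)v_n$. The remedy is to build the cutoff from annular shells that are carried by the flow $e^{-tS}$ — i.e.\ to exploit that $\langle Sx,\nabla\rangle$ is tangent to spheres and that the angular Fourier mode $e^{i\langle n,\varphi\rangle}$ is an eigenfunction of the angular part — so that the drift acting on $v_{\mathrm{far}}$ reproduces the $i\langle n,\sigma\rangle$ term already accounted for in \eqref{equ:1.16}, and acting on $\chi_n$ only produces genuinely small contributions. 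Verifying this compatibility between the cutoff geometry, the angular decomposition, and the growth of the drift is the technical heart of the proof; once it is in place, the three estimates above combine to show $\lambda\in\sigma_{\mathrm{ess}}(\L)$, and since $\lambda\in\sigma_{\mathrm{disp}}(\L)$ was arbitrary we conclude $\sigma_{\mathrm{disp}}(\L)\subseteq\sigma_{\mathrm{ess}}(\L)$.
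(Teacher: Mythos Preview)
Your overall strategy---a Weyl sequence with supports escaping to infinity, built from the plane-wave solution of the far-field operator---is exactly the mechanism the paper uses (in Theorem~\ref{thm:EssentialSpectrumOfLQ}, to which Theorem~\ref{thm:EssSpecLRW} is reduced). The paper's construction is more concrete than yours: it first passes to multiple planar polar coordinates so that the drift $\langle Sx,\nabla\rangle$ becomes the pure angular operator $-\sum_{l=1}^k\sigma_l\partial_{\phi_l}$, which annihilates any radial cutoff. One then takes $\hat v_R(\psi)=\big(\prod_{l=1}^k\chi_R(r_l)\big)\chi_R(|\tilde y|)\,\hat v(\psi)$ with $\chi_R$ supported on $[R-1,2R+1]$ and equal to $1$ on $[R,2R]$, and a direct volume count gives $\|v_R\|_{L^p}^p\geqslant CR^d$ while $\|(\lambda I-\L_Q)v_R\|_{L^p}^p\leqslant CR^{d-1}+CR^d\eta_R$ with $\eta_R=\esssup_{|x|\geqslant R}|Q(x)|$. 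Your ``annular shells carried by the flow $e^{-tS}$'' is the right intuition but leaves the estimate unverified; the polar-coordinate route makes it mechanical. The paper then concludes $\lambda\in\sigma_{\mathrm{ess}}$ by observing that varying $\omega,\rho$ in the dispersion relation produces a continuum of spectral points through $\lambda$, rather than via the singular-sequence criterion you invoke; either works.

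There is, however, a genuine gap in your treatment of the perturbation term. You claim that $\sup_{x\in\supp v_n}|Df(v_\infty)-Df(v_\star(x))|$ is ``as small as we like uniformly in $n$'' by choosing $K_1$ small. But $K_1$ is a \emph{fixed} threshold: once $\varepsilon$ is given, $K_1$ is determined, and the hypothesis only says $|v_\star(x)-v_\infty|\leqslant K_1$ for $|x|\geqslant R_0$. This yields a fixed bound on $|Q(x)|$ at infinity, not $Q(x)\to 0$, and hence only $\limsup_n\|(\lambda I-\L)v_n\|_{L^p}\leqslant CK_1$, which does not place $\lambda$ in $\sigma(\L)$. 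What the argument actually needs is $\eta_R\to 0$ as $R\to\infty$, i.e.\ genuine decay of $v_\star(x)-v_\infty$. The paper obtains this by invoking the pointwise exponential decay of the profile from \cite[Cor.~4.3]{BeynOtten2016a}: the smallness hypothesis $\sup_{|x|\geqslant R_0}|v_\star-v_\infty|\leqslant K_1$ is exactly the input to that corollary, whose output is $|v_\star(x)-v_\infty|\leqslant C_1\exp\big(-\mu\sqrt{|x|^2+1}\big)$, and hence $|Q(x)|\to 0$. This prior decay result is precisely where the regularity assumptions $f\in C^{\max\{2,r-1\}}$, $v_\star\in C^{r+1}$, and the conditions on $d/p$ enter---your proposal neither accounts for them nor invokes the step they enable, and without it the Weyl argument does not close.
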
 

For the proof of Theorem \ref{thm:EssSpecLRW}, we consider differential operators
\begin{equation}
  \label{equ:4.1}
  \L_Q:\left(\D^p_{\mathrm{loc}}(\L_0),\left\|\cdot\right\|_{\L_0}\right)\rightarrow \left(L^p(\R^d,\C^m),\left\|\cdot\right\|_{L^p}\right),\quad 1<p<\infty
\end{equation}
of the form
\begin{equation}
  \label{equ:1.12a}
  [\L_Q v](x) = A\triangle v(x) + \left\langle Sx,\nabla v(x)\right\rangle - B_{\infty}v(x) + Q(x) v(x),\,x\in\R^d,\,d\geqslant 2,
\end{equation}
where the function $Q\in L^{\infty}(\R^d,\C^{m,m})$ is assumed to \begriff{vanish at infinity} in the sense that
\begin{equation} \label{equ:4.2}
  \underset{|x|\geqslant R}{\esssup}\left|Q(x)\right|\rightarrow 0\text{ as }R\rightarrow\infty.
\end{equation}
The maximal domain of $\L_Q$ agrees with that of the Ornstein-Uhlenbeck operator $\L_0$ from \eqref{equ:1.6}
\begin{equation}
  \label{equ:4.3}
  \D^p_{\mathrm{loc}}(\L_0) = \left\{v\in W^{2,p}_{\mathrm{loc}}(\R^d,\C^m)\cap L^p(\R^d,\C^m):\; \L_0 v\in L^p(\R^d,\C^m)\right\}
\end{equation}
for $1<p<\infty$, see \cite[Thm.~5.1]{Otten2015a}. It is a Banach space when equipped with the graph norm
\begin{equation}
  \label{equ:4.4}
  \left\|v\right\|_{\L_0} = \left\|\L_0 v\right\|_{L^p(\R^d,\C^m)} + \left\|v\right\|_{L^p(\R^d,\C^m)},\quad v\in\D^p_{\mathrm{loc}}(\L_0),
\end{equation}
and has the following representation (see \cite[Thm.~6.1]{Otten2015a}),
\begin{equation*}
  \D^p_{\mathrm{max}}(\L_0) = \left\{v\in W^{2,p}(\R^d,\C^m):\;\left\langle S\ 
\cdot \ ,\nabla v\right\rangle\in L^p(\R^d,\C^m)\right\},
\end{equation*}
where the graph norm $\left\|\cdot\right\|_{\L_0}$ is equivalent to $\left\|\cdot\right\|_{W^{2,p}}+\left\|\left\langle S\ \cdot \ ,\nabla\right\rangle\right\|_{L^{p}}$, 
see \cite[Cor.~5.26]{Otten2014}. Such a strong characterization of the domain is rather involved to prove, but will not be needed here. 
The differential operator $\L_Q$ is a variable coefficient perturbation of the (complex-valued) Ornstein-Uhlenbeck operator $\L_0$, which is studied in
depth in \cite[Sec.~3]{BeynOtten2016a} and 
\cite[Sec.~7]{Otten2014}. In Section \ref{subsec:2.2} we continue this study and determine the essential spectrum $\sigma_{\mathrm{ess}}(\L_Q)$ 
 in $L^p$ (see Theorem \ref{thm:EssentialSpectrumOfLQ}). An application of Theorem \ref{thm:EssentialSpectrumOfLQ} to $-B_{\infty}=Df(v_{\infty})$ and 
$Q(x)=Df(v_{\star}(x))-Df(v_{\infty})$ completes the proof of Theorem \ref{thm:EssSpecLRW}. For the proof of the decay \eqref{equ:4.2} 
we use \cite[Cor.~4.3]{BeynOtten2016a} to deduce that $v_{\star}(x)\to v_{\infty}$ as $|x|\to\infty$.

In Section \ref{sec:4} we analyze Fredholm properties of the linearized operator
\begin{equation}
  \label{equ:1.12e}
  \lambda I-\L:(\D^p_{\mathrm{loc}}(\L_0),\left\|\cdot\right\|_{\L_0})\rightarrow (L^p(\R^d,\C^m),\left\|\cdot\right\|_{L^p})
\end{equation}
with $\L$ given by \eqref{equ:1.8}, and of its adjoint operator
\begin{equation}
  \label{equ:1.12f}
  (\lambda I-\L)^*:(\D^q_{\mathrm{loc}}(\L_0^*),\left\|\cdot\right\|_{\L_0^*})\rightarrow (L^q(\R^d,\C^m),\left\|\cdot\right\|_{L^q}), \quad q=\frac{p}{p-1},
\end{equation}
defined by
\begin{equation}
  \label{equ:1.12c}
  \left[\L^* v\right](x) = A^{\herm} \triangle v(x)-\left\langle Sx,\nabla v(x)\right\rangle + Df\left(v_{\star}(x)\right)^{\herm} v(x),\,x\in\R^d,\,d\geqslant 2,
\end{equation}
For values $\Re\lambda>-\bzero$, with $b_0$ the spectral bound from \eqref{equ:aminamaxazerobzero}, we show that the operator 
$\lambda I-\L$ is Fredholm of index $0$. Moreover, we prove that its formal adjoint operator $(\lambda I-\L)^*$ from \eqref{equ:1.12c} 
and its abstract adoint operator (see Definition \ref{equ:DefAdj}), coincide on their common domain 
\begin{equation}
  \label{equ:4.3_q}
  \D^q_{\mathrm{loc}}(\L_0^*) = \left\{v\in W^{2,q}_{\mathrm{loc}}(\R^d,\C^m)\cap L^q(\R^d,\C^m):\; \L_0^* v\in L^q(\R^d,\C^m)\right\}.
\end{equation}
Then the Fredholm alternative applies and leads to the following result.

\begin{theorem}[Fredholm properties of the linearization $\L$]\label{thm:FredPropLin}
  Let the assumptions \eqref{cond:A4DC} and \eqref{cond:A5}--\eqref{cond:A8} be satisfied for $\K=\C$ and for some $1<p<\infty$. 
  Moreover, let $\lambda\in\C$ with $\Re\lambda\geqslant -\bzero+\gamma$ for some $\gamma>0$, where $-\bzero = s(Df(v_{\infty}))$ 
  denotes the spectral bound of $Df(v_{\infty})$.
  Then, for every $0<\varepsilon<1$ there is a constant $K_1=K_1(A,f,v_{\infty},\gamma,d,p,\varepsilon)>0$ such that 
  for every classical solution $v_{\star}\in C^2(\R^d,\R^m)$ of 
  \begin{align}
    \label{equ:NonlinearProblemRealFormulation}
    A\triangle v(x)+\left\langle Sx,\nabla v(x)\right\rangle+f(v(x))=0,\,x\in\R^d,
  \end{align}
  satisfying
  \begin{align}
    \label{equ:BoundednessConditionForVStar}
    \sup_{|x|\geqslant R_0}\left|v_{\star}(x)-v_{\infty}\right|\leqslant K_1\text{ for some $R_0>0$},
  \end{align} 
  the following statements hold:
  \begin{itemize}[leftmargin=0.43cm]\setlength{\itemsep}{0.1cm}
  \item[a)] (Fredholm properties of $\L$). The linearized operator
  \begin{equation*}
    \lambda I-\L:(\D^p_{\mathrm{loc}}(\L_0),\left\|\cdot\right\|_{\L_0})\rightarrow(L^p(\R^d,\C^m),\left\|\cdot\right\|_{L^p})
  \end{equation*}
  is Fredholm of index $0$.
  \item[b)] (Eigenvalues of $\L$). 
  In addition to a), let Assumption \eqref{cond:A4DCq} be satisfied for $q=\frac{p}{p-1}$, and let $\lambda\in\sigma_{\mathrm{pt}}(\L)$ with geometric 
  multiplicity $1\leqslant n:=\mathrm{dim}\,\mathcal{N}(\lambda I-\L)<\infty$. 
  Then $ \mathcal{N}((\lambda I-\L)^*)\subseteq \D^q_{\mathrm{loc}}(\L_0^*)$ also has dimension $n$ and the inhomogenous equation
  \begin{equation}
    \label{equ:InhomEqu}
    (\lambda I-\L)v=g \in L^p(\R^d,\C^m)
  \end{equation}
  has at least one (not necessarily unique) solution $v\in\D^p_{\mathrm{loc}}(\L_0)$ iff $g\in(\mathcal{N}((\lambda I-\L)^*))^{\bot}$, i.e.
  \begin{equation}
    \label{equ:OrthCond}
    \langle \psi,g\rangle_{q,p} = 0,\; \text{for all} \; \psi \in
\mathcal{N}((\lambda I-\L)^*).
  \end{equation}
  If the orthogonality condition \eqref{equ:OrthCond} is satisfied, then one can select a solution $v$ of \eqref{equ:InhomEqu} with
  \begin{equation}
    \label{equ:EstimateEF}
    \left\|v\right\|_{\L_0} \leqslant C\left\|g\right\|_{L^p},\quad \left\|v\right\|_{W^{1,p}} \leqslant C\left\|g\right\|_{L^p},
  \end{equation}
  where $C$ denotes a generic constant which does not depend on $g$.
  \end{itemize}
\end{theorem}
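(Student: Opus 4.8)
The plan is to deduce Theorem~\ref{thm:FredPropLin} from the essential-spectrum description in Theorem~\ref{thm:EssentialSpectrumOfLQ} together with the $L^p$-resolvent estimates for the Ornstein--Uhlenbeck operator $\L_0$ established in \cite{Otten2014,Otten2015a,BeynOtten2016a}. First I would write $\L = \L_0 - B_\infty + Q$ with $B_\infty = -Df(v_\infty)$ (which satisfies Assumption~\ref{ass:Assumption5} by \eqref{cond:A8}, \eqref{cond:A9}, \eqref{cond:A10}) and $Q(x) = Df(v_\star(x)) - Df(v_\infty)$, so that $\L = \L_Q$ in the notation of \eqref{equ:1.12a}. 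The boundedness assumption \eqref{equ:BoundednessConditionForVStar} combined with \cite[Cor.~4.3]{BeynOtten2016a} (which gives $v_\star(x) \to v_\infty$ as $|x|\to\infty$) and the $C^1$-smoothness of $f$ yields that $Q \in L^\infty(\R^d,\C^{m,m})$ vanishes at infinity in the sense of \eqref{equ:4.2}; the constant $K_1$ is chosen so small that, via uniform continuity of $Df$ near $v_\infty$, the bound $\esssup_{|x|\ge R_0}|Q(x)|$ is as small as needed later.

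\medskip
\noindent\textbf{Step 1: Fredholm property of $\lambda I - \L$.}
For $\Re\lambda > -\bzero = s(Df(v_\infty))$ the limit operator $\lambda I - (\L_0 - B_\infty)$ is invertible with bounded inverse on $L^p$: this follows from the $L^p$-resolvent estimates for $\L_0$ in \cite[Thm.]{Otten2015a} together with the commutation/simultaneous-diagonalizability hypothesis \eqref{cond:A8}, which decouples the system into scalar Ornstein--Uhlenbeck problems shifted by the eigenvalues of $Df(v_\infty)$, each solvable to the right of its spectral bound. Equivalently, by Theorem~\ref{thm:EssentialSpectrumOfLQ} with $Q\equiv 0$, the essential spectrum of $\L_0 - B_\infty$ lies in $\{\Re z \le -\bzero\}$ and the point spectrum is empty, so $\lambda\in\rho(\L_0-B_\infty)$ for $\Re\lambda\ge-\bzero+\gamma$. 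It then suffices to show that multiplication by $Q$ is a relatively compact perturbation of $\L_0 - B_\infty$ on $(\D^p_{\mathrm{loc}}(\L_0),\|\cdot\|_{\L_0})$: writing $Q = Q\chi_{\{|x|\le R\}} + Q\chi_{\{|x|>R\}}$, the first term maps $W^{2,p}_{\mathrm{loc}}\cap L^p$ compactly into $L^p$ by the Rellich--Kondrachov theorem on the ball $B_R$ (using the domain characterization $\D^p_{\mathrm{max}}(\L_0)\subseteq W^{2,p}$), while the second has operator norm $\le\esssup_{|x|>R}|Q(x)|\to 0$. Hence $\lambda I - \L = (\lambda I - (\L_0-B_\infty))(I + R_\lambda Q)$ where $R_\lambda Q$ is compact on $L^p$, so $\lambda I - \L$ is Fredholm; its index equals that of $\lambda I - (\L_0 - B_\infty)$, namely $0$, by homotopy invariance of the Fredholm index along $t\mapsto \lambda I - (\L_0 - B_\infty) - tQ$, $t\in[0,1]$ (each member is Fredholm since $tQ$ is again relatively compact). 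This proves part~a).

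\medskip
\noindent\textbf{Step 2: the adjoint, and the Fredholm alternative.}
For part~b), the formal adjoint is $\L^* = \L_0^* - B_\infty^\herm + Q^\herm$ with $\L_0^* v = A^\herm\triangle v - \langle Sx,\nabla v\rangle$, and Assumption~\eqref{cond:A4DCq} for the conjugate index $q$ is exactly what is needed to run the identification argument of \cite{Otten2015a}/Definition~\ref{equ:DefAdj}: it guarantees that $\L_0^*$ is closed on $\D^q_{\mathrm{loc}}(\L_0^*)$ and that the formal adjoint coincides with the abstract Banach-space adjoint $(\lambda I - \L)^*$ on that domain (the drift term changes sign under integration by parts because $\operatorname{div}(Sx)=\trace S = 0$ by skew-symmetry). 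Once this identification is in place, the abstract Fredholm alternative for a Fredholm-index-$0$ operator gives $\range(\lambda I - \L) = {}^{\perp}\kernel((\lambda I-\L)^*)$ and $\dim\kernel((\lambda I - \L)^*) = \dim\kernel(\lambda I - \L) = n$ (index $0$); this is precisely \eqref{equ:OrthCond}. Elliptic regularity applied to $(\lambda I - \L^*)\psi = 0$, i.e. $A^\herm\triangle\psi = \langle Sx,\nabla\psi\rangle + (B_\infty^\herm - Q(x)^\herm - \lambda)\psi$, upgrades any $\psi$ in the abstract kernel to $W^{2,q}_{\mathrm{loc}}$, giving $\kernel((\lambda I - \L)^*)\subseteq\D^q_{\mathrm{loc}}(\L_0^*)$. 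Finally, to obtain a solution of \eqref{equ:InhomEqu} satisfying \eqref{equ:EstimateEF}, restrict $\lambda I - \L$ to a closed complement of $\kernel(\lambda I - \L)$ in $\D^p_{\mathrm{loc}}(\L_0)$; this restriction is a bounded bijection onto the closed subspace $\range(\lambda I-\L)$, hence has bounded inverse by the open mapping theorem, yielding $\|v\|_{\L_0}\le C\|g\|_{L^p}$, and then $\|v\|_{W^{1,p}}\le C\|g\|_{L^p}$ follows from the embedding $\D^p_{\mathrm{max}}(\L_0)\hookrightarrow W^{2,p}\hookrightarrow W^{1,p}$ with norm equivalence from \cite[Cor.~5.26]{Otten2014}.

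\medskip
\noindent\textbf{Main obstacle.}
The delicate point is Step~1: verifying that $\lambda I - (\L_0 - B_\infty)$ is actually invertible (not merely Fredholm) on $L^p$ for $\Re\lambda > -\bzero$, and that multiplication by $Q$ is genuinely relatively compact rather than merely relatively bounded. The invertibility rests on the simultaneous-diagonalizability hypothesis \eqref{cond:A8}, which is what lets one pass from the scalar Ornstein--Uhlenbeck resolvent theory (where $L^p$-resolvent estimates on the right of the spectral bound are already subtle because of the unbounded drift) to the full system; without it the spectral picture of $\L_0 - B_\infty$ need not be a half-plane. The relative compactness of $Q\cdot$ requires the non-trivial domain characterization $\D^p_{\mathrm{max}}(\L_0)\subseteq W^{2,p}(\R^d,\C^m)$ from \cite[Thm.~6.1]{Otten2015a} before Rellich--Kondrachov can be invoked on bounded balls; this is exactly where the value of having that characterization available (even though it "will not be needed" for the statement itself) makes itself felt. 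Everything else — the index homotopy, the identification of the adjoint via \eqref{cond:A4DCq}, elliptic regularity, and the bounded-inverse estimate — is standard once these two ingredients are secured.
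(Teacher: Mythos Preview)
Your overall strategy matches the paper's, and Step~2 (adjoint identification via \eqref{cond:A4DCq}, Fredholm alternative, open-mapping estimate) is essentially Lemmas~\ref{lem:4.4}--\ref{lem:4.5}. But Step~1 has a genuine gap.

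The theorem assumes only \eqref{cond:A4DC} and \eqref{cond:A5}--\eqref{cond:A8}; neither \eqref{cond:A9} nor \eqref{cond:A10} is available, so $\bzero=-s(Df(v_\infty))$ may be nonpositive and \cite[Cor.~4.3]{BeynOtten2016a} cannot be invoked to conclude $v_\star(x)\to v_\infty$. Hypothesis \eqref{equ:BoundednessConditionForVStar} only tells you that $|Q(x)|$ is \emph{small} (controlled by the choice of $K_1$) for $|x|\ge R_0$, not that it tends to zero; your ``$\esssup_{|x|>R}|Q(x)|\to 0$ as $R\to\infty$'' argument therefore fails, and multiplication by $Q$ is not shown to be relatively compact. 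Similarly, Theorem~\ref{thm:EssentialSpectrumOfLQ} only gives the inclusion $\sigma_{\mathrm{disp}}\subseteq\sigma_{\mathrm{ess}}$ and says nothing about emptiness of point spectrum or invertibility of $\lambda I-(\L_0-B_\infty)$. The paper repairs both issues at once (Theorem~\ref{thm:4.3}): write $\lambda=\lambda_1+\lambda_2$ with $\lambda_2=-\bzero+\gamma$, so that the shifted matrix $B_\infty+\lambda_2 I$ has positive spectral bound $\tilde b_0=\gamma$; split $Q=Q_{\mathrm s}+Q_{\mathrm c}$ with $Q_{\mathrm c}=\chi_{R_0}Q$ compactly supported and $\|Q_{\mathrm s}\|_{L^\infty}$ small by \eqref{equ:PropertyQ3}; then $\lambda_1 I-\widetilde{\L}_{\mathrm s}$ (with $\widetilde{\L}_{\mathrm s}=\L_\infty+Q_{\mathrm s}-\lambda_2 I$) is a genuine homeomorphism by \cite[Thm.~3.2]{BeynOtten2016a}, and only the \emph{compactly supported} piece $Q_{\mathrm c}$ is required to yield a compact factor, which it does via the embedding $\D^p_{\mathrm{loc}}(\L_0)\subseteq W^{1,p}$ and Lemma~\ref{lem:4.2}. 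Note also that the paper uses only this $W^{1,p}$ embedding, not the stronger $W^{2,p}$ characterization you invoke (cf.\ the remark following \eqref{equ:4.4}). Your route---factorizing through the constant-coefficient $\L_\infty$ and treating all of $Q$ as the compact perturbation---is precisely Corollary~\ref{cor:4.4}, and it would work, but only under the extra decay hypothesis \eqref{equ:PropertyQ2}, which the assumptions of Theorem~\ref{thm:FredPropLin} do not supply.
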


An extension of Theorem \ref{thm:FredPropLin} provides us exponential decay of eigenfunctions and of adjoint eigenfunctions for eigenvalues $\lambda\in\C$ 
with $\Re\lambda\geqslant -\bzero+\gamma$  (Theorem \ref{thm:ExpDecEigLin}), cf. \cite[Thm.~3.5]{BeynOtten2016a} for the case of eigenfunctions. 

The idea of proof for Theorem \ref{thm:FredPropLin} is to write $\lambda=\lambda_1+\lambda_2$ with $\lambda_2:=-\bzero+\gamma$, $\lambda_1:=\lambda-\lambda_2$,  
and to decompose the variable coefficient $Q=Q_{\mathrm{s}}+Q_{\mathrm{c}}$ into the sum of a function $Q_{\mathrm{s}}$ which is small with respect 
to $\left\|\cdot\right\|_{L^{\infty}}$ and a function $Q_{\mathrm{c}}$ which is compactly supported on $\R^d$. 
This allows us to decompose the differential operator $\lambda I-\L_Q$ as follows
\begin{equation}
  \label{equ:DecompositionLQ_intro}
  \lambda I-\L_Q = (I-Q_{\mathrm{c}}(\cdot)(\lambda_1 I-\widetilde{\L}_{\mathrm{s}})^{-1})(\lambda_1 I-\widetilde{\L}_{\mathrm{s}}),
\end{equation}
where $\widetilde{\L}_{\mathrm{s}}:=\L_{\mathrm{s}}-\lambda_2 I$ and $\L_{\mathrm{s}}$ denotes a small variable coefficient perturbation, defined by
\begin{equation}
  \label{equ:Ls}
  [\L_{\mathrm{s}} v](x) = A\triangle v(x) + \left\langle Sx,\nabla v(x)\right\rangle - B_{\infty}v(x) + Q_{\mathrm{s}}(x) v(x),\,x\in\R^d.
\end{equation}
For a similar decomposition under more restrictive assumptions on $B_{\infty}$ see \cite{BeynLorenz2008,BeynOtten2016a,Otten2014}. 
Then we show that $Q_{\mathrm{c}}(\cdot)(\lambda_1 I-\widetilde{\L}_{\mathrm{s}})^{-1}$ is compact and $\lambda_1 I-\widetilde{\L}_{\mathrm{s}}$ is Fredholm of index $0$, 
which implies $\lambda I-\L_Q$ to be Fredholm of index $0$. 
A crucial ingredient for the proof of these two statements is the inclusion $\D^p_{\mathrm{loc}}(\L_0)\subset W^{1,p}(\R^d,\C^m)$, 
proved in \cite[Thm.~5.8 \& 6.8]{Otten2014}, \cite[Thm.~5.7]{Otten2014a}.   
Further, it is essential to solve the identification problem 
for the adjoint operator of $\L_Q$ in $L^q(\R^d,\C^m)$
along the lines of \cite{Otten2015a} (Lemma \ref{lem:4.4}).
We show the existence and uniqueness of a solution $\tilde{v}\in\D^q_{\mathrm{loc}}(\L_0^*)$ 
of the resolvent equation $(\lambda I-\L_Q)^*\tilde{v}=g\in L^q(\R^d,\C^m)$, 
using the corresponding result from \cite[Thm.~3.1]{BeynOtten2016a}. 
For this we employ the $L^q$-dissipativity condition \eqref{cond:A4DCq} for the adjoint operator, 
which is known to be equivalent to the $L^q$-antieigenvalue condition \eqref{cond:A4q}, see \cite{Otten2015b}. Finally,  the Fredholm alternative 
is applied to $\lambda I-\L_Q$ and $(\lambda I-\L_Q)^*$ (Theorem \ref{lem:4.5}) and exponential decay of (adojoint) eigenfunctions is shown 
(Theorem \ref{thm:APrioriEstimatesInLpRelativelyCompactPerturbation}).
 These results hold for  $-B_{\infty}=Df(v_{\infty})$ and 
$Q(x)=Df(v_{\star}(x))-Df(v_{\infty})$ and thus  complete the proof of Theorem \ref{thm:FredPropLin} (and Theorem \ref{thm:ExpDecEigLin}).
Note that a similar reasoning is used in \cite{BeynOtten2016a,Otten2014} 
to prove exponential decay of the wave profile $v_{\star}$ itself.

In Section \ref{sec:3} we investigate the point spectrum $\sigma_{\mathrm{pt}}(\L)$ of $\L$, which is determined by the symmetries of the 
underlying $\SE(d)$-group action of dimension $\frac{d(d+1)}{2}$.
By the ansatz $v=(Dv_{\star})(Ex+b)$ for $E\in\C^{d,d}$, $E^{\top}=-E$ and $b\in\C^d$, eigenfunctions of the problem \eqref{equ:1.9} (in the classical sense) 
are obtained from the $\frac{d(d+1)}{2}$-dimensional eigenvalue problem (Section \ref{subsec:3.1})
\begin{align*}
  \lambda E & = [E,S], \\
  \lambda b & = -Sb,
\end{align*}
which is to be solved for $(\lambda,E,b)$ with $\lambda\in\C$, $E\in\C^{d,d}$, $E^{\top}=-E$ and $b\in\C^d$. Defining the \begriff{symmetry set}
\begin{equation}
  \label{equ:1.13}
  \sigma_{\mathrm{sym}}(\L) := \sigma(S) \cup \{\lambda_i^S+\lambda_j^S:1\leqslant i<j\leqslant d\},
\end{equation}
we show that $\sigma_{\mathrm{sym}}(\L)$ belongs to the point spectrum $\sigma_{\mathrm{pt}}(\L)$ of $\L$ in $L^p$, determine their associated eigenfunctions, 
and show that the eigenfunctions and their adjoint counterparts decay exponentially in space.

\begin{theorem}[Point spectrum on the imaginary axis and shape of eigenfunctions]\label{thm:3.1}
  Let $f\in C^1(\R^m,\R^m)$, $S\in\R^{d,d}$ be skew-symmetric, and let $U\in\C^{d,d}$ denote the unitary matrix satisfying $\Lambda_S=U^{\herm} SU$ with  
  diagonal matrix $\Lambda_S=\diag(\lambda_1^S,\ldots,\lambda_d^S)$ and eigenvalues $\lambda_1^S,\ldots,\lambda_d^S\in\sigma(S)$. Moreover, let $v_{\star}\in C^3(\R^d,\R^m)$ be a classical 
  solution of \eqref{equ:1.5}, then the function $v:\R^d\rightarrow\C^m$ given by
  \begin{align}
    \label{equ:3.1}
    v(x) = \left\langle Ex+b,\nabla v_{\star}(x)\right\rangle = (Dv_{\star}(x))(Ex+b)
  \end{align}
  is a classical solution of the eigenvalue problem \eqref{equ:1.9} if $E\in\C^{d,d}$ and $b\in\C^d$ either satisfy
  \begin{align}
    \label{equ:3.2}
    \lambda=-\lambda_l^S,\quad E=0,\quad b=Ue_l
  \end{align}
  for some $l=1,\ldots,d$, or
  \begin{align}
    \label{equ:3.3}
    \lambda=-(\lambda_i^S+\lambda_j^S),\quad E=U(I_{ij}-I_{ji})U^T,\quad b=0
  \end{align}
  for some $i=1,\ldots,d-1$ and $j=i+1,\ldots,d$. Here, $I_{ij}\in\R^{d,d}$ denotes the matrix having the entries $1$ at the $i$-th row and $j$-th column and 
  $0$ otherwise. All the eigenvalues above lie on the imaginary axis.
\end{theorem}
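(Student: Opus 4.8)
The plan is to verify both assertions by applying the first-order differential operator $D_w u := \langle Ex+b,\nabla u\rangle$ associated with the affine field $w(x)=Ex+b$ to the rotating wave equation \eqref{equ:1.5}, which reduces $(\lambda I-\L)v=0$ to the finite-dimensional system $\lambda E=[E,S]$, $\lambda b=-Sb$, and then to check that the two explicit choices \eqref{equ:3.2}, \eqref{equ:3.3} solve it. Since $v_{\star}\in C^3$, the function $v=D_w v_{\star}$ from \eqref{equ:3.1} lies in $C^2$, so all the manipulations below are classical, and the hypothesis $f\in C^1$ is what licenses the chain rule.

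First I would record three identities. (i) For $E^{\top}=-E$ one has $[\triangle,D_w]=0$; indeed $\triangle(D_w u)=D_w(\triangle u)+2\sum_{i,j}E_{ij}\partial_i\partial_j u$, and the double sum vanishes because the Hessian is symmetric while $E$ is skew. (ii) Writing $D_S u:=\langle Sx,\nabla u\rangle$, the vector-field commutator gives $D_w D_S u-D_S D_w u=\langle[S,E]x+Sb,\nabla u\rangle$ with $[S,E]=SE-ES$, with no assumption on $S$ or $E$. (iii) Componentwise differentiation yields $D_w(f(v_{\star}))=Df(v_{\star})\,D_w v_{\star}$. Applying $D_w$ to \eqref{equ:1.5} and combining (i)--(iii) gives $0=A\triangle v+\langle Sx,\nabla v\rangle+\langle[S,E]x+Sb,\nabla v_{\star}\rangle+Df(v_{\star})v$, i.e. $\L v=\langle[E,S]x-Sb,\nabla v_{\star}\rangle$. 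Since $\lambda v=\langle\lambda Ex+\lambda b,\nabla v_{\star}\rangle$, the eigenrelation $(\lambda I-\L)v=0$ holds whenever $[E,S]=\lambda E$ and $-Sb=\lambda b$, which is exactly the reduced system.

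It then remains to verify \eqref{equ:3.2} and \eqref{equ:3.3}. From $SU=U\Lambda_S$ one gets $S(Ue_l)=\lambda_l^S Ue_l$, so $E=0$, $b=Ue_l$ solve the system with $\lambda=-\lambda_l^S$. For \eqref{equ:3.3}, transposing $SU=U\Lambda_S$ and using $S^{\top}=-S$ yields $U^{\top}S=-\Lambda_S U^{\top}$; combined with the elementary relations $I_{ab}\Lambda_S=\lambda_b^S I_{ab}$ and $\Lambda_S I_{ab}=\lambda_a^S I_{ab}$, this gives, for $E=U(I_{ij}-I_{ji})U^{\top}$, that $[E,S]=ES-SE=-U\big[(I_{ij}-I_{ji})\Lambda_S+\Lambda_S(I_{ij}-I_{ji})\big]U^{\top}=-(\lambda_i^S+\lambda_j^S)E$, so $[E,S]=\lambda E$ with $\lambda=-(\lambda_i^S+\lambda_j^S)$; moreover $E^{\top}=U(I_{ij}-I_{ji})^{\top}U^{\top}=-E$, and $b=0$ solves $-Sb=\lambda b$ trivially. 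Finally $\sigma(S)\subset i\R$ since $S$ is real and skew-symmetric, so both $-\lambda_l^S$ and $-(\lambda_i^S+\lambda_j^S)$ are purely imaginary.

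The argument is essentially bookkeeping, and the only delicate point is the transpose identity $U^{\top}S=-\Lambda_S U^{\top}$ in the rotation case: it is precisely what forces the ansatz to use $U^{\top}$ rather than $U^{\herm}$, and it hinges on $S$ being real. Conceptually, the computation works because $v=D_w v_{\star}$ is, up to sign, the infinitesimal action of the element $(E,b)$ of the complexified Lie algebra $\se(d)=\so(d)\ltimes\C^d$ on the relative equilibrium $v_{\star}$, and $\L$ intertwines this action with $-\mathrm{ad}_{(S,0)}$; the eigenvalues $-\lambda_l^S$ and $-(\lambda_i^S+\lambda_j^S)$ are exactly the eigenvalues of $-\mathrm{ad}_{(S,0)}$ on the translational part $\C^d$ and on the rotational part $\so(d)$, respectively.
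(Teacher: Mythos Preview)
Your proof is correct and follows essentially the same route as the paper's derivation in Section~\ref{subsec:3.1}: substitute the ansatz $v=(Dv_\star)(Ex+b)$, use skew-symmetry of $E$ to commute $D_w$ past $\triangle$, compute the commutator with the drift, and reduce to the finite-dimensional system $\lambda E=[E,S]$, $\lambda b=-Sb$, which is then solved explicitly. Your use of the transpose identity $U^{\top}S=-\Lambda_S U^{\top}$ to verify $[E,S]=\lambda E$ directly is a slight streamlining of the paper's argument via the substitution $\tilde{E}=U^{\herm}E\overline{U}$ in \eqref{equ:12}, and the closing Lie-algebraic interpretation via $\mathrm{ad}_{(S,0)}$ on $\se(d)$ is a nice conceptual addendum, but the substance is the same.
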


\begin{theorem}[Point spectrum at localized rotating waves]\label{thm:3.1b}
  Let $f\in C^{r-1}(\R^m,\R^m)$ for some $r\in\N$ with $r\geqslant 3$ and let the assumptions \eqref{cond:A4DC}, \eqref{cond:A5}, \eqref{cond:A7}, \eqref{cond:A8} 
  and \eqref{cond:A10} be satisfied for $\K=\C$ and for some $1<p<\infty$ with $\frac{d}{p}\leqslant 2$. 
  Then, for every $0<\varepsilon<1$ there is a constant $K_1=K_1(A,f,v_{\infty},d,p,\varepsilon)>0$ with the following property: 
  For every classical solution $v_{\star}\in C^{r+1}(\R^d,\R^m)$ of 
  \begin{align*}
    A\triangle v(x)+\left\langle Sx,\nabla v(x)\right\rangle+f(v(x))=0,\,x\in\R^d,
  \end{align*}
  satisfying
  \begin{align*}
    \sup_{|x|\geqslant R_0}\left|v_{\star}(x)-v_{\infty}\right|\leqslant K_1\text{ for some $R_0>0$,}
  \end{align*}
  the symmetry set $\sigma_{\mathrm{sym}}(\L)$ from \eqref{equ:1.13} belongs to the point spectrum $\sigma_{\mathrm{pt}}(\L)$ of the 
  linearized operator $\L$ from \eqref{equ:1.8} in $L^p(\R^d,\C^m)$, i.e. $\sigma_{\mathrm{sym}}(\L)\subseteq \sigma_{\mathrm{pt}}(\L)$ in $L^p(\R^d,\C^m)$.
\end{theorem}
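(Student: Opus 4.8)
The plan is to combine the explicit eigenfunctions furnished by Theorem~\ref{thm:3.1} with the Fredholm--index-zero result of Theorem~\ref{thm:FredPropLin}. Since $S$ is skew-symmetric, $\sigma(S)\subset i\R$ and this set is invariant under negation; the same is then true of $\{\lambda_i^S+\lambda_j^S:i<j\}$, so $\sigma_{\mathrm{sym}}(\L)\subset i\R$ and it is exactly exhausted by the eigenvalues $-\lambda_l^S$ and $-(\lambda_i^S+\lambda_j^S)$ listed in Theorem~\ref{thm:3.1}. In particular $\Re\lambda=0$ for every $\lambda\in\sigma_{\mathrm{sym}}(\L)$. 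By \eqref{cond:A10} one has $-\bzero=s(Df(v_{\infty}))\leqslant-\beta_{\infty}<0$, so each such $\lambda$ lies strictly to the right of $-\bzero$. I fix $\gamma:=\bzero/2\in(0,\bzero)$ and choose $K_1$ small enough that both the exponential decay results of \cite{BeynOtten2016a,Otten2014a,Otten2015a} and Theorem~\ref{thm:FredPropLin} (with this $\gamma$; its hypotheses \eqref{cond:A4DC}, \eqref{cond:A5}--\eqref{cond:A8} follow from those of the present theorem, using $f\in C^{r-1}\subseteq C^2$ since $r\geqslant 3$) become applicable. Then $\lambda I-\L$ is Fredholm of index $0$ on the closed half-plane $\{\Re\lambda\geqslant-\bzero/2\}$, which contains an open neighbourhood of $i\R$.

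Next, fix $\lambda\in\sigma_{\mathrm{sym}}(\L)$. By Theorem~\ref{thm:3.1} --- whose hypotheses hold since $f\in C^1$, $S$ is skew-symmetric and $v_{\star}\in C^{r+1}\subseteq C^3$ --- there is a classical solution $v(x)=(Dv_{\star}(x))(Ex+b)$ of \eqref{equ:1.9} with $(E,b)$ given by \eqref{equ:3.2} if $\lambda\in\sigma(S)$ and by \eqref{equ:3.3} otherwise. Two things must be verified. First, $v\not\equiv 0$: by \cite[Cor.~4.3]{BeynOtten2016a} the profile is localized, $v_{\star}(x)\to v_{\infty}$ as $|x|\to\infty$, and for a nontrivial wave $v_{\star}\not\equiv v_{\infty}$ this forces $v_{\star}$ to be non-constant along every nonzero real direction; hence for the translational modes \eqref{equ:3.2}, splitting $Ue_l=p+iq$ with $p,q\in\R^d$ shows $v=\langle p,\nabla v_{\star}\rangle+i\langle q,\nabla v_{\star}\rangle\not\equiv 0$, while for the rotational modes \eqref{equ:3.3} one has $v\not\equiv 0$ unless $v_{\star}$ is invariant under the one-parameter subgroup of $\SO(d)$ generated by $E$, which is excluded (and is nontrivial for the spinning solitons of Section~\ref{sec:5}). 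Second, $v\in\D^p_{\mathrm{loc}}(\L_0)$: the regularity $v_{\star}\in C^{r+1}$, $r\geqslant 3$, gives $v\in W^{2,p}_{\mathrm{loc}}(\R^d,\C^m)$; the exponential decay estimates (here $r\geqslant 3$ and $\tfrac{d}{p}\leqslant 2$ enter) yield $|Dv_{\star}(x)|\leqslant Ce^{-\mu|x|}$ for some $C,\mu>0$, whence $|v(x)|\leqslant C(1+|x|)e^{-\mu|x|}$ and $v\in L^p(\R^d,\C^m)$; and, using that $v$ solves \eqref{equ:1.9}, $\L_0 v=\lambda v-Df(v_{\star})v\in L^p(\R^d,\C^m)$ because $Df(v_{\star})\in L^{\infty}(\R^d,\C^{m,m})$. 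Thus $v$ is a nonzero eigenfunction of $\L$ in $L^p$, so $\lambda\in\sigma(\L)$.

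It remains to upgrade $\lambda\in\sigma(\L)$ to $\lambda\in\sigma_{\mathrm{pt}}(\L)$, i.e. to show $\lambda$ is isolated in $\sigma(\L)$ and of finite multiplicity. On the connected open set $\Omega:=\{\Re\mu>-\bzero/2\}\supset i\R$ the map $\mu\mapsto\mu I-\L$ is an analytic family of Fredholm operators of index $0$ which is boundedly invertible for $\Re\mu$ sufficiently large, by the $L^p$-resolvent estimates of \cite{Otten2014a,Otten2015a,BeynOtten2016a}. Standard analytic Fredholm theory (cf.\ \cite{Henry1981}) then shows that $\{\mu\in\Omega:\mu I-\L\text{ is not boundedly invertible}\}$ is discrete in $\Omega$ and that $\bigcup_{k\in\N}\mathcal{N}((\mu I-\L)^k)$ is finite dimensional at each of its points. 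Since $\sigma(\L)\setminus\Omega\subseteq\{\Re\lambda\leqslant-\bzero/2\}$ is bounded away from $i\R$, every $\lambda\in\sigma_{\mathrm{sym}}(\L)$ is isolated in $\sigma(\L)$ and of finite multiplicity, hence a normal point: $\lambda\in\sigma_{\mathrm{pt}}(\L)$. This proves $\sigma_{\mathrm{sym}}(\L)\subseteq\sigma_{\mathrm{pt}}(\L)$ in $L^p(\R^d,\C^m)$; the asserted exponential spatial decay of the eigenfunctions is the bound $|v(x)|\leqslant C(1+|x|)e^{-\mu|x|}$ obtained above, while that of the adjoint eigenfunctions follows from the extension recorded in Theorem~\ref{thm:ExpDecEigLin}.

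The Fredholm bookkeeping of the last paragraph is routine once Theorem~\ref{thm:FredPropLin} is in hand; the main obstacle lies in the second paragraph: showing that the candidate eigenfunctions do not vanish identically --- where the localization of $v_{\star}$ and its non-invariance under rotation subgroups are genuinely needed --- and tracking the regularity and decay of $v_{\star}$ precisely enough to place $v$ in $\D^p_{\mathrm{loc}}(\L_0)\subset W^{2,p}(\R^d,\C^m)$.
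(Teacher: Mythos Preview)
Your proof follows essentially the same route as the paper's: Theorem~\ref{thm:3.1} supplies the classical eigenfunctions $v=(Dv_{\star})(Ex+b)$, the exponential decay results of \cite{BeynOtten2016a} (which is where $r\geqslant 3$ and $d/p\leqslant 2$ enter) place $v$ in $\D^p_{\mathrm{loc}}(\L_0)$, and Theorem~\ref{thm:FredPropLin}(a) combined with analytic Fredholm theory yields isolation and finite multiplicity. One minor caveat: your justification that $v\not\equiv 0$ for the rotational modes (``which is excluded'') is not made rigorous --- but the paper does not address this point at all, implicitly treating nondegeneracy of the wave as part of the standing hypotheses.
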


One can extend Theorem \ref{thm:3.1b} by an application of Theorem \ref{thm:ExpDecEigLin} to show exponential decay of eigenfunctions and adjoint eigenfunctions, 
cf. \cite[Sec.~5]{BeynOtten2016a} for the case of eigenfunctions. The eigenvalue problem for the commutator generated by a skew-symmetric matrix, is analyzed for 
example in \cite[Lem.~4 \& 5]{BlochIserles2005} and \cite[Thm.~2]{TausskyWielandt19962}.
Further, we mention that the asymptotic behavior of adjoint eigenfunctions plays a role in the study of response functions, see \cite{BiktashevaHoldenBiktashev2006}.

For the proof of Theorem \ref{thm:3.1b}, we apply Theorem \ref{thm:FredPropLin}a) to eigenvalues $\lambda\in\sigma_{\mathrm{sym}}(\L)$. Here, exponential decay of the 
rotating wave $v_{\star}$, proved in \cite[Cor.~4.1]{BeynOtten2016a}, implies that $v$ from \eqref{equ:3.1} belongs to $\D^p_{\mathrm{loc}}(\L_0)$, and hence is an eigenfunction 
of $\L$ in $L^p$. Isolatedness and finite multiplicity of $\lambda$ is obtained from spectral stability of $Df(v_{\infty})$ assured by \eqref{cond:A10} and Theorem \ref{thm:FredPropLin}a).

In Section \ref{sec:5} we apply our results to the cubic-quintic complex Ginzburg-Landau equation
\begin{equation}
  \label{equ:1.15}
  u_t = \alpha\triangle u + u\left(\delta + \beta|u|^2 + \gamma|u|^4\right)
\end{equation}
which is known to exhibit spinning soliton solutions. We rewrite \eqref{equ:1.15} as a $2$-dimensional real-valued system and 
formulate the eigenvalue problem for the associated linearization at the spinning soliton. We then compute numerical spectra and eigenfunctions 
using the freezing method from \cite{BeynOttenRottmannMatthes2013,BeynThuemmler2004} and the software {\sc Comsol}, 
\cite{ComsolMultiphysics52}. This allows to compare exact and numerical spectra as well as their associated eigenfunctions.

Let us finally discuss some related results from the literature. Spectra of Ornstein-Uhlenbeck operators in various function spaces are studied in 
\cite{DaPratoLunardi1995,Kozhan2009,Metafune2001,MetafunePallaraPriola2002,VanNeerven2005}, spectra at localized rotating waves in \cite{BeynLorenz2008,Otten2014},
and spectra at spiral waves (nonlocalized rotating waves) in \cite{FiedlerScheel2003,Otten2014,SandstedeScheel2000,SandstedeScheel2001,SandstedeScheel2006,BarkleyWheeler2006}.
For scroll waves we refer to \cite{AlonsoBaerPanfilov2013,BarkleyFoulkesBiktashevBiktasheva2010}. Exponential decay is proved in \cite{BeynOtten2016a,Otten2014} for solutions of nonlinear problems  
for Ornstein-Uhlenbeck operators (with unbounded coefficients of $\nabla u$), while  \cite{GebranStuart2010,RabierStuart2000} treat solutions of real-valued quasilinear second-order equations 
(with bounded coefficients of $\nabla u$). We also refer to
 \cite{GebranStuart2010,PoganScheel2011,Volpert2011}
for various results on Fredholm properties of elliptic partial differential on
unbounded domains in settings different from ours.
Nonlinear stability of rotating waves is investigated in \cite{BeynLorenz2008,SandstedeScheelWulff1997}. For numerical approximations of rotating waves (including wave profiles, velocities and spectra), 
based on the freezing method from \cite{BeynOttenRottmannMatthes2013,BeynThuemmler2004}, we refer to \cite{BeynOtten2016a,Otten2014}. Numerical results on rotating 
waves are studied in \cite{ErmentroutVanderventel2016} for scalar excitable media, and in \cite{BeynOttenRottmannMatthes2016b} for second order evolution equations. 
Interactions of several rotating waves is analyzed numerically in \cite{BeynOttenRottmannMatthes2013,Otten2014}.
\sect{Essential spectrum and dispersion relation}
\label{sec:2}

\subsection{Formal derivation of the dispersion relation}
\label{subsec:2.1}
In this section we discuss the essential spectrum $\sigma_{\mathrm{ess}}(\L)$ 
of the linearization $\L$ from \eqref{equ:1.8}. We compute eigenvalues 
and bounded eigenfunctions of \eqref{equ:1.9} with $\L$ replaced by its 
far-field limit. These eigenvalues are determined by the dispersion relation 
\eqref{equ:1.11}. By a standard truncation procedure we then show that
these values are not isolated and lead to unbounded resolvents, hence
belong to the essential spectrum. We proceed in several steps:

\noindent
\textbf{1. The far-field operator.} 
Let $v_{\infty}\in\R^m$ denote the constant asymptotic state of the wave profile $v_{\star}$. i.e. $f(v_{\infty})=0$ and $v_{\star}(x)\to v_{\infty}\in\R^m$ as $|x|\to\infty$. 
Assuming $f\in C^1$ and introducing $Q(x)\in\R^{m,m}$ via
\begin{equation*}
  Q(x) := Df(v_{\star}(x))-Df(v_{\infty}),\quad x\in\R^d,
\end{equation*}
allows us to write \eqref{equ:1.9} as
\begin{equation}
  \label{equ:2.3}
  (\lambda I -\L_Q)v = 0,\,x\in\R^d
\end{equation}
with $\L_Q=\L_{\infty}+Q(x)$ and far-field operator
\begin{align*}
  \L_{\infty}v = A\triangle v + \left\langle Sx,\nabla v\right\rangle + Df(v_{\infty})v.
\end{align*}
Obviously, $f\in C^1$ and $v_{\star}(x)\to v_{\infty}$ imply $Q(x)\to 0$ as $|x|\to\infty$, i.e. $Q$ vanishes at infinity.

\noindent
\textbf{2. Orthogonal transformation.} 
We next transfer the skew-symmetric matrix $S$ into quasi-diagonal real form which allows us to separate the axes of rotations in \eqref{equ:2.3}.
Let $S\in\R^{d,d}$ be skew-symmetric, then $\sigma(S)\subset i\R$ with nonzero eigenvalues $\pm i\sigma_1,\ldots,\pm i\sigma_k$ and semisimple eigenvalue $0$ of multiplicity $d-2k$. 
Here, $\sigma_l$ denotes the angular velocity in the $(y_{2l-1},y_{2l})$-plane in one of the $k$ different planes of rotation. 
Moreover, there is an orthogonal matrix $P\in\R^{d,d}$ such that
\begin{equation*}
  S=P\Lambda P^{\top}\quad\text{with}\quad\Lambda=\mathrm{diag}\left(\Lambda_1,\ldots,\Lambda_k,\mathbf{0}\right),\quad
  \Lambda_j=\begin{pmatrix}0 &\sigma_j\\-\sigma_j &0\end{pmatrix},\quad 
  \mathbf{0}\in\R^{d-2k,d-2k}.
\end{equation*}
The orthogonal transformation $\tilde{v}(y):=v(T_1(y))$ with $x=T_1(y):=Py$ for $y\in\R^d$ transfers \eqref{equ:2.3} into
\begin{equation}
  \label{equ:2.4}
  (\lambda I-\tilde{\L}_Q)\tilde{v} = 0,\,y\in\R^d
\end{equation}
with $\tilde{\L}_Q=\tilde{\L}_{\infty}+Q(T_1(y))$ and 
\begin{equation*}
  \tilde{\L}_{\infty}\tilde{v} 
  = A\sum_{j=1}^{d}\partial_{y_j}^2\tilde{v} + \sum_{l=1}^{k}\sigma_l\left(y_{2l}\partial_{y_{2l-1}}-y_{2l-1}\partial_{y_{2l}}\right)\tilde{v} + Df(v_{\infty})\tilde{v}.
\end{equation*}

\noindent
\textbf{3. Transformation into several planar polar coordinates.} 
Since we have $k$ angular derivatives in $k$ different planes it is advisable to transform each plane into planar polar coordinates via
\begin{equation*}
  \begin{pmatrix}y_{2l-1}\\y_{2l}\end{pmatrix} = T(r_l,\phi_l):=\begin{pmatrix}r_l\cos\phi_l\\r_l\sin\phi_l\end{pmatrix},\quad r_l>0,\;\phi_l\in[-\pi,\pi),\;l=1,\ldots,k.
\end{equation*}
All further coordinates, $y_{2k+1},\ldots,y_d$, remain fixed. The multiple planar polar coordinates transformation 
$\hat{v}(\psi):=\tilde{v}(T_2(\psi))$ with $T_2(\psi) = (T(r_1,\phi_1),\ldots,T(r_k,\phi_k),y_{2k+1},\ldots,y_d)$, $\psi=(r_1,\phi_1,\ldots,r_k,\phi_k,y_{2k+1},\ldots,y_d)$ 
in the domain $\Omega:=((0,\infty)\times[-\pi,\pi))^k\times\R^{d-2k}$, transfers \eqref{equ:2.4} into
\begin{equation}
  \label{equ:2.5}
  (\lambda I-\hat{\L}_Q)\hat{v}=0,\,\psi\in\Omega
\end{equation}
with $\hat{\L}_Q=\hat{\L}_{\infty}+Q(T_1(T_2(\psi)))$ and
\begin{align*}
  \hat{\L}_{\infty}\hat{v} = A\bigg[\sum_{l=1}^{k}\bigg(\partial_{r_l}^2+\frac{1}{r_l}\partial_{r_l}+\frac{1}{r_l^2}\partial_{\phi_l}^2\bigg)+\sum_{l=2k+1}^{d}\partial_{y_l}^2\bigg]\hat{v} - \sum_{l=1}^{k}\sigma_l\partial_{\phi_l}\hat{v} + Df(v_{\infty})\hat{v}. 
\end{align*}

\noindent
\textbf{4. Simplified operator (limit operator, far-field operator).} 
Since the essential spectrum depends on the limiting equation for $|x|\to\infty$, we formally let $r_l\to\infty$ for any $1\leqslant l\leqslant k$. This turns \eqref{equ:2.5} into
\begin{equation}
  \label{equ:2.6}
  (\lambda I - \L_{\infty}^{\mathrm{sim}})\hat{v} = 0,\quad \psi\in\Omega
\end{equation}
with the simplified far-field operator
\begin{align*}
  \L_{\infty}^{\mathrm{sim}}\hat{v} = A\bigg[\sum_{l=1}^{k}\partial_{r_l}^2 + \sum_{l=2k+1}^{d}\partial_{y_l}^2\bigg]\hat{v}
         -\sum_{l=1}^{k}\sigma_l \partial_{\phi_l}\hat{v} + Df(v_{\infty})\hat{v}.
\end{align*}
Note that we used the property $\left|Q(x)\right|\to 0$ as $|x|\to\infty$ which was established in step 1.

\noindent
\textbf{5. Angular Fourier transform.}
Finally, we solve for eigenvalues and eigenfunctions of $\L_{\infty}^{\mathrm{sim}}$ by an angular Fourier decomposition (separation of variables) with 
$\omega\in\R^k$, $\rho,y\in\R^{d-2k}$, $n\in\Z^k$, $z\in\C^m$, $|z|=1$, $r\in(0,\infty)^k$, $\phi\in[-\pi,\pi)^k$:
\begin{align}
  \label{equ:2.7a}
  \hat{v}(\psi) = \exp\bigg(i\sum_{l=1}^{k}\omega_l r_l\bigg)\exp\bigg(i\sum_{l=1}^{k}n_l\phi_l\bigg)\exp\bigg(i\sum_{l=2k+1}^{d}\rho_l y_l\bigg)z 
                = \exp\Big(i\langle\omega,r\rangle + i\langle n,\phi\rangle + i\langle\rho,y\rangle\Big)z.
\end{align}
Inserting \eqref{equ:2.7a} into \eqref{equ:2.6} leads to the $m$-dimensional eigenvalue problem
\begin{equation}
  \label{equ:2.9}
  \big(\lambda I_m+(|\omega|^2+|\rho|^2)A+i\langle n,\sigma\rangle I_m-Df(v_{\infty})\big)z = 0.
\end{equation}

\textbf{6. Dispersion relation and dispersion set.} The \begriff{dispersion relation} for localized rotating waves of \eqref{equ:1.1} now states that 
every $\lambda\in\C$ satisfying
\begin{equation}
  \label{equ:6.15}
  \det\big(\lambda I_m+(|\omega|^2+|\rho|^2)A+i\langle n,\sigma\rangle I_m-Df(v_{\infty})\big) = 0
\end{equation}
for some $\omega\in\R^k$, $\rho\in\R^{d-2k}$ and $n\in\Z^k$ belongs to the essential spectrum of $\L$, i.e. $\lambda\in\sigma_{\mathrm{ess}}(\L)$. 
Of course, one can replace $|\omega|^2+|\rho|^2$ by any nonnegative real number, cf. \eqref{equ:1.11}. Defining the \begriff{dispersion set}
\begin{equation}
  \label{equ:6.16}
  \sigma_{\mathrm{disp}}(\L):=\{\lambda\in\C:\;\lambda\text{ satisfies \eqref{equ:6.15} for some $\omega\in\R^k$, $\rho\in\R^{d-2k}$ and $n\in\Z^k$}\},
\end{equation}
the dispersion relation formally states that $\sigma_{\mathrm{disp}}(\L)\subseteq\sigma_{\mathrm{ess}}(\L)$. 
A rigorous justification of this result is shown in the next section using suitable function spaces (see Theorem \ref{thm:EssSpecLRW}).
Our formal calculation shows that the dispersion relation is a helpful tool to locate the essential spectrum and to verify its existence. 

\begin{remark}
\begin{itemize}[leftmargin=0.43cm]\setlength{\itemsep}{0.1cm}
  \item[a)] (Several axis of rotation).
  For space dimensions $d\geqslant 3$ the axis of rotation is in general not orthogonal to some plane $(x_i,x_j)$, 
  $1\leqslant i,j\leqslant d$. In particular, for space dimensions $d\geqslant 4$ the pattern can rotate rigidly around several axes of rotation simultaneously. 
  Therefore, the idea of step 2 is to separate the axes of rotation in such a way that they are orthogonal to (completely) different planes.
  \item[b)] (Dispersion relation for spiral waves). 
  The dispersion relation for nonlocalized rotating waves, such as spiral waves and scroll waves, is harder to derive and 
  differs from \eqref{equ:6.15}. A dispersion relation for spiral waves is developed in \cite{FiedlerScheel2003,SandstedeScheel2000,SandstedeScheel2001}. 
  Their approach is based on a Bloch wave transformation and on an application of Floquet theory. A summary of these results, 
  which is structured similar to the derivation above, can be found in \cite[Sec.~9.5]{Otten2014}. The angular Fourier decomposition is also used 
  in \cite{FiedlerScheel2003} for investigating essential spectra of spiral waves.
  For spectra of spiral waves in the FitzHugh-Nagumo system we refer to \cite{SandstedeScheel2006,BarkleyWheeler2006}. 
  Results on essential spectra of nonlocalized rotating waves for space dimensions $d\geqslant 3$, such as scroll waves, are quite rare in the literature 
  and we refer to \cite{FiedlerScheel2003,BarkleyFoulkesBiktashevBiktasheva2010} and references therein.
  \end{itemize}
\end{remark}

\subsection{Essential spectrum in $L^p$}
\label{subsec:2.2}
We now analyze the essential $L^p$-spectrum of the differential operator
\begin{equation*}
  \L_Q v = A\triangle v + \left\langle Sx,\nabla v\right\rangle -B_{\infty}v + Q(x)v,
\end{equation*}
satisfying $|Q(x)|\to 0$ as $|x|\to\infty$, in which case the dispersion relation reads as
\begin{equation}
  \label{equ:6.17}
  \det\big(\lambda I_m+(|\omega|^2+|\rho|^2)A+i\langle n,\sigma\rangle I_m+B_{\infty}\big)=0
\end{equation}
for some $\omega\in\R^k$, $\rho\in\R^{d-2k}$ and $n\in\Z^k$. The following Theorem shows that the dispersion set
\begin{equation}
  \label{equ:6.18}
  \sigma_{\mathrm{disp}}(\L_Q):=\left\{\lambda\in\C:\;\lambda\text{ satisfies \eqref{equ:6.17} for some $\omega\in\R^k$, $\rho\in\R^{d-2k}$ and $n\in\Z^k$}\right\},
\end{equation}
belongs to the essential spectrum $\sigma_{\mathrm{ess}}(\L_Q)$ of $\L_Q$ in $L^p(\R^d,\C^m)$ for $1<p<\infty$. 
Note that it does not prove equality $\sigma_{\mathrm{disp}}(\L_Q)=\sigma_{\mathrm{ess}}(\L_Q)$. 
Applying this result to
\begin{equation}
  \label{equ:EssSpecSettingLRW}
  -B_{\infty} = Df(v_{\infty}),\quad Q(x)=Df(v_{\star}(x))-Df(v_{\infty}),\quad x\in\R^d,
\end{equation}
then implies the inclusion $\sigma_{\mathrm{disp}}(\L)\subseteq\sigma_{\mathrm{ess}}(\L)$ for the linearized operator $\L$ (Theorem \ref{thm:EssSpecLRW}). 

\begin{theorem}[Essential spectrum of $\L_Q$]\label{thm:EssentialSpectrumOfLQ}
  Let the assumptions \eqref{cond:A4DC}, \eqref{cond:A5}, \eqref{cond:A8B} and
  \begin{equation}
    \label{equ:PropertyQ}
    Q\in L^{\infty}(\R^d,\K^{m,m})\quad\text{with}\quad\eta_R:=\underset{|x|\geqslant R}{\esssup}\left|Q(x)\right|\rightarrow 0\text{ as }R\rightarrow\infty
  \end{equation}
  be satisfied for $1<p<\infty$ and $\K=\C$. Moreover, let $\pm i\sigma_1,\ldots,\pm i\sigma_k$ with $\sigma_1,\ldots,\sigma_k\in\R$ denote the nonzero eigenvalues of $S$. 
  Then the dispersion set $\sigma_{\mathrm{disp}}(\L_Q)$ from \eqref{equ:6.18} belongs to the essential spectrum $\sigma_{\mathrm{ess}}(\L_Q)$ of $\L_Q$ in $L^p(\R^d,\C^m)$, 
  i.e. $\sigma_{\mathrm{disp}}(\L_Q)\subseteq\sigma_{\mathrm{ess}}(\L_Q)$ in $L^p(\R^d,\C^m)$.
\end{theorem}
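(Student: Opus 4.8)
The plan is to show that every $\lambda\in\sigma_{\mathrm{disp}}(\L_Q)$ fails to be a normal point of $\L_Q$ in the sense of Definition~\ref{def:1.1}, by constructing a singular sequence: functions $u_N\in\D^p_{\mathrm{loc}}(\L_0)$ with $\|u_N\|_{L^p}=1$, $u_N\rightharpoonup 0$ weakly in $L^p(\R^d,\C^m)$, and $(\lambda I-\L_Q)u_N\to 0$ in $L^p(\R^d,\C^m)$. As a preliminary, \eqref{cond:A4DC} implies the strict accretivity \eqref{cond:A3}, so $\L_0$ is closed on $\D^p_{\mathrm{loc}}(\L_0)$ (see \cite{Otten2014,Otten2015a}); since $-B_\infty+Q(\cdot)$ acts as a bounded multiplication operator on $L^p(\R^d,\C^m)$, the operator $\L_Q$ is also closed, and it is densely defined because $C_c^\infty(\R^d,\C^m)\subseteq\D^p_{\mathrm{loc}}(\L_0)$. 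The orthogonal change of variables $x=Py$ of Subsection~\ref{subsec:2.1} is an $L^p$-isometry preserving $\D^p_{\mathrm{loc}}(\L_0)$, the essential spectrum, and the decay property \eqref{equ:PropertyQ}, so it suffices to work with the transformed operator $\tilde\L_Q=\tilde\L_\infty+\tilde Q$, and we use the $k$ planar polar coordinates $(r_l,\phi_l)$ of Subsection~\ref{subsec:2.1} to exhibit the eigenfunctions.

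Fix $\lambda\in\sigma_{\mathrm{disp}}(\L_Q)$; by \eqref{equ:6.17} pick $\omega\in\R^k$, $\rho\in\R^{d-2k}$, $n\in\Z^k$ and $z\in\C^m$ with $|z|=1$ and $\big(\lambda I_m+(|\omega|^2+|\rho|^2)A+i\langle n,\sigma\rangle I_m+B_\infty\big)z=0$, and set $w:=\exp\big(i\langle\omega,r\rangle+i\langle n,\phi\rangle+i\langle\rho,y''\rangle\big)z$, $y''=(y_{2k+1},\dots,y_d)$, the angular Fourier mode \eqref{equ:2.7a}. Expressing $\tilde\L_\infty$ in the polar coordinates and inserting $w$, the choice of $z$ annihilates the leading part and leaves the remainder $(\lambda I-\tilde\L_\infty)w=-A\sum_{l=1}^{k}\big(\tfrac{i\omega_l}{r_l}-\tfrac{n_l^2}{r_l^2}\big)w$, which is $O(1/\min_l r_l)$. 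Choose a cutoff $\chi_N$ that depends only on $r_1,\dots,r_k$ and $y_{2k+1},\dots,y_d$, equals $1$ where all of these lie in $[2N,3N]$, is supported where they lie in $[N,4N]$, and satisfies $|\nabla\chi_N|\le C/N$ and $|\triangle\chi_N|\le C/N^2$; put $v_N:=\chi_N w$. Because $\chi_N$ is independent of the angles $\phi_l$, the rotational drift $\langle Sx,\nabla\rangle$ annihilates $\chi_N$ and thus produces no commutator with it; because $\supp\chi_N\subseteq\{r_l\ge N>0\}$, the function $v_N$ is smooth with compact support (the polar singularities on the rotation axes are avoided), hence $v_N\in\D^p_{\mathrm{loc}}(\L_0)$.

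The required bounds are routine bookkeeping. Integrating $|z|^p=1$ over $\{\chi_N=1\}$ against the polar Jacobian $\prod_l r_l$ gives $\|v_N\|_{L^p}^p\ge c\,N^{d}$ for some $c>0$. On $\supp v_N$, which lies in $\{|y|\ge N\}$ and has Lebesgue measure $\lesssim N^{d}$, one has $(\lambda I-\tilde\L_Q)v_N=\chi_N(\lambda I-\tilde\L_\infty)w-2A(\nabla\chi_N\!\cdot\!\nabla w)-(\triangle\chi_N)Aw-\tilde Q\,v_N$; here the first term is $\le C/N$ pointwise, the cutoff remainder is $\le C(1/N+1/N^2)$ pointwise since $|\nabla w|$ is bounded uniformly in $N$ on $\{r_l\ge N\}$, and $|\tilde Q\,v_N|\le\eta_N$ by \eqref{equ:PropertyQ}. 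Hence $\|(\lambda I-\tilde\L_Q)v_N\|_{L^p}\le C(1/N+\eta_N)N^{d/p}$, so $u_N:=v_N/\|v_N\|_{L^p}$ satisfies $\|u_N\|_{L^p}=1$ and $\|(\lambda I-\tilde\L_Q)u_N\|_{L^p}\le C(1/N+\eta_N)\to 0$; moreover $u_N\rightharpoonup 0$, since $1<p<\infty$ and the supports escape to infinity. If $\lambda$ were a normal point of $\tilde\L_Q$, then $\lambda I-\tilde\L_Q$ would be Fredholm of index $0$ (standard, cf.\ \cite{Henry1981}: trivial for $\lambda\in\rho(\tilde\L_Q)$, and for an isolated eigenvalue of finite multiplicity via the Riesz projection onto the finite-dimensional generalized eigenspace), hence bounded below modulo the finite-rank projection $P$ onto its kernel, which yields the contradiction $1=\|u_N\|_{L^p}\le C\|(\lambda I-\tilde\L_Q)u_N\|_{L^p}+\|Pu_N\|_{L^p}\to 0$. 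Therefore $\lambda\in\sigma_{\mathrm{ess}}(\tilde\L_Q)$, and transforming back, $\lambda\in\sigma_{\mathrm{ess}}(\L_Q)$; as $\lambda$ was arbitrary, $\sigma_{\mathrm{disp}}(\L_Q)\subseteq\sigma_{\mathrm{ess}}(\L_Q)$. (Equivalently: the singular sequence already shows $\lambda\in\sigma(\L_Q)$; since by \eqref{cond:A8B} the set $\sigma_{\mathrm{disp}}(\L_Q)$ equals the countable union of nondegenerate rays $\{-t a_j-b_j-i\langle n,\sigma\rangle:t\ge 0\}$ with $a_j\in\sigma(A)$, $b_j\in\sigma(B_\infty)$, $n\in\Z^k$, it has no isolated points, so $\lambda$ is not isolated in $\sigma(\L_Q)$ and hence cannot lie in $\sigma_{\mathrm{pt}}(\L_Q)$.)

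The step requiring most care is the cutoff construction: the truncation region must lie far enough out that both $\tilde Q$ and the discarded lower-order polar terms $\tfrac1{r_l}\partial_{r_l}$, $\tfrac1{r_l^2}\partial_{\phi_l}^2$ are small, wide enough that $|\nabla\chi_N|$ and $|\triangle\chi_N|$ are small, and---crucially---depend only on the radial and nonrotating variables, so that the unbounded rotational drift does not produce an $O(1)$ commutator term. One then has to check that $\supp v_N$ and $\supp\nabla\chi_N$ have Lebesgue measure of the same order $N^{d}$ as $\{\chi_N=1\}$, so that the relative error $\|(\lambda I-\L_Q)u_N\|_{L^p}/\|u_N\|_{L^p}$ genuinely tends to $0$, and that $v_N$ indeed belongs to $\D^p_{\mathrm{loc}}(\L_0)$.
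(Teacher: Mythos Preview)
Your proof is correct and follows essentially the same approach as the paper: construct a Weyl-type singular sequence by truncating the angular Fourier mode \eqref{equ:2.7a} with a cutoff that depends only on the radial variables $r_l$ and the nonrotating coordinates $y_{2k+1},\dots,y_d$ (so that the unbounded drift produces no commutator), and estimate the quotient $\|(\lambda I-\L_Q)v_N\|_{L^p}/\|v_N\|_{L^p}\to 0$. The only cosmetic differences are that the paper takes transition layers of fixed width~$1$ (yielding the sharper bound $CR^{d-1}+CR^d\eta_R$), cuts off $|\tilde y|$ radially rather than coordinatewise, and concludes $\lambda\in\sigma_{\mathrm{ess}}$ directly by observing that varying $\omega,\rho$ shows $\lambda$ is not isolated, rather than via your Fredholm/weak-convergence contradiction---both arguments are standard and equivalent here.
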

Let us first discuss some consequences of Theorem \ref{thm:EssentialSpectrumOfLQ}.
\begin{remarks}
  \begin{itemize}[leftmargin=0.43cm]\setlength{\itemsep}{0.1cm}
  \item[a)] (Location of the dispersion set and the effect of assumption \eqref{cond:A9B}).
  From the dispersion relation \eqref{equ:6.17} and conditions
\eqref{cond:A3}, \eqref{cond:A8B} one infers $\sigma_{\mathrm{disp}}(\L_Q)\subseteq\C_{b_0}$, where
$\C_{b_0}=\left\{\lambda\in\C:\,\Re\lambda\leqslant -\bzero\right\}$
  and $-\bzero=s(-B_{\infty})$ is the spectral bound of $-B_{\infty}$. If
 in addition, the stability condition \eqref{cond:A9B} holds, then $-\bzero=s(-B_{\infty})<0$ and
 $\sigma_{\mathrm{disp}}(\L_Q)$ is located in the left half-plane.
  
  \item[b)] (Density of dispersion set in a half-plane).
  Moreover, if there exist indices $n,j \in \{1,\ldots,k\}$ such that 
$\sigma_j\neq 0$  and $\sigma_n\sigma_j^{-1}\notin\mathbb{Q}$, then $\sigma_{\mathrm{disp}}(\L_Q)$ is dense 
  in the half-plane $\C_{b_0}$, 
  which implies  $\sigma_{\mathrm{ess}}(\L_Q)=\C_{b_0}$. If on the other hand, $\sigma_n\sigma_j^{-1}\in\mathbb{Q}$
for all $n,j$, then 
  the dispersion set $\sigma_{\mathrm{disp}}(\L_Q)$ is a discrete subgroup of 
$\C_{b_0}$ which is independent of $p$. 
  The reason for this conclusion is given by Metafune in \cite[Thm.~2.6]{Metafune2001}. Therein it is proved that the essential spectrum of the drift term  $v\mapsto\left\langle S\cdot,\nabla v\right\rangle)$,
agrees with $i\R$, if and only if there exists $0\neq\sigma_n,\sigma_j\in\R$ such that 
  $\sigma_n\sigma_j^{-1}\notin\mathbb{Q}$. Otherwise, the essential spectrum
is a discrete subgroup of $i\R$ which is 
  independent of $p$.

  \begin{figure}[ht]
      \centering      
      \subfigure[$d\in\{2,3\}$, not dense]{\includegraphics[page=1, height=3.5cm] {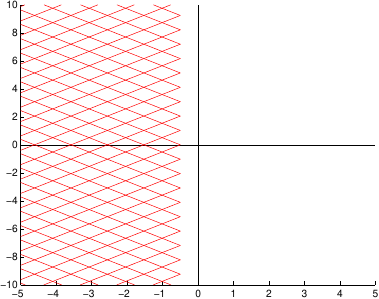} \label{fig:essentialspectrum_dequal2or3}}
      \subfigure[$d=4$, not dense]{\includegraphics[page=2, height=3.5cm] {Images.pdf} \label{fig:essentialspectrum_dequal4(notdense)}}
      \subfigure[$d=4$, dense]{\includegraphics[page=3, height=3.5cm] {Images.pdf} \label{fig:essentialspectrum_dequal4(dense)}}
      \caption{Dispersion set $\sigma_{\mathrm{disp}}(\L_Q)$ of $\L_Q$ from \eqref{equ:1.12a} for parameters $A=\frac{1}{2}\left(1+i\right)$, $B_{\infty}=\frac{1}{2}$ and $Q=0$.}
      \label{fig:EssentialSpectrum}
  \end{figure}

  \noindent
  Figure \ref{fig:EssentialSpectrum} illustrates the set $\sigma_{\mathrm{disp}}(\L_Q)$ in the scalar complex case for $A=\frac{1}{2}\left(1+i\right)$, $B_{\infty}=\frac{1}{2}$ and $Q=0$. 
  Figure \ref{fig:EssentialSpectrum}(a) shows $\sigma_{\mathrm{disp}}(\L_Q)$
for $\sigma_1=1.027$ and space dimension $d=2$ and $d=3$ (see the examples
in Section \ref{sec:5}). 
  In this case $\sigma_{\mathrm{disp}}(\L_Q)$ forms a zig-zag curve, see \cite{BeynLorenz2008} for $d=2$, and is not dense in 
  $\C_{-\frac{1}{2}}$. Note that density of $\sigma_{\mathrm{disp}}(\L_Q)$ can only occur 
   for space dimensions $d\geqslant 4$. Figures \ref{fig:EssentialSpectrum}(b)(c) show two such cases for $d=4$.
  In the first case $\sigma_1=1$, $\sigma_2=1.5$, hence
 $\sigma_1\sigma_2^{-1}\in\mathbb{Q}$ and  
  $\sigma_{\mathrm{disp}}(\L_Q)$ is not dense in $\C_{- \frac{1}{2}}$. The second case belongs to 
   $\sigma_1=1$, $\sigma_2=\frac{1}{2}\exp(1)$ for which
density occurs. 
 This shows  that $\sigma_{\mathrm{disp}}(\L_Q)$  
  may change dramatically with the eigenvalues of $S$.

  \item[c)] (Sectorial operators and analytic semigroups). 
  For $S\neq 0$, Theorem \ref{thm:EssentialSpectrumOfLQ} implies that the operator $\L_Q$ is not sectorial in $L^p(\R^d,\C^m)$, and the 
  corresponding semigroup is not analytic on $L^p(\R^d,\C^m)$ for every $1<p<\infty$, see \cite[Cor.~7.10]{Otten2014}. For the scalar real-valued case 
  we refer to \cite{Metafune2001,PruessRhandiSchnaubelt2006,VanNeervenZabczyk1999}.
  \end{itemize}
\end{remarks}

\begin{proof}
Let $R\geqslant 2$ and let $\chi_R\in C^{\infty}_{\mathrm{c}}([0,\infty),\R)$ be a cut-off function with derivatives bounded uniformly w.r.t. $R$, and
\begin{align*}
    \chi_R(r)= 0, \ r\in I_1 \cup I_5,  \quad
 \chi_R(r)=1,   \ r\in I_3, \quad
    \chi_R(r)          \in[0,1],\ r\in I_2 \cup I_4, 
   \end{align*}
$I_1=[0,R-1]$, $I_2=[R-1,R]$, $I_3=[R,2R]$, $I_4=[2R,2R+1]$, $I_5=[2R+1,\infty)$. Introducing
\begin{align*}
  w_R:=\frac{v_R}{\left\|v_R\right\|_{L^p}},\quad\;
  v_R(T_1(T_2(\psi))) := \hat{v}_R(\psi),\quad\;
  \hat{v}_R(\psi) := \bigg(\prod_{l=1}^{k}\chi_R(r_l)\bigg)\chi_R(|\tilde{y}|)\hat{v}(\psi),\quad\;
  \text{$\hat{v}$ from \eqref{equ:2.7a},} 
\end{align*}
for $\psi=\left(r_1,\phi_1,\ldots,r_k,\phi_k,\tilde{y}\right)$, $\tilde{y}=\left(y_{2k+1},\ldots,y_d\right)$, $\phi=(\phi_1,\ldots,\phi_k)\in[-\pi,\pi)^k$, 
$\mathbf{r}=(r_1,\ldots,r_k)\in(0,\infty)^k$, and $T_1,T_2$ as in Section \ref{subsec:2.1}, we have $w_R\in\D^p_{\mathrm{loc}}(\L_0)$ and show
\begin{align*}
             \left\|\left(\lambda I-\L_Q\right)w_R\right\|_{L^p}^p
          =  \frac{\left\|\left(\lambda I-\L_Q\right)v_R\right\|_{L^p}^p}{\left\|v_R\right\|_{L^p}^p} 
  \leqslant  \frac{CR^{d-1}+CR^d\eta_R}{CR^d}
          =  \frac{C}{R}+\eta_R\rightarrow 0\text{ as }R\rightarrow\infty.
\end{align*}
By this estimate continuity of the resolvent cannot hold, hence $\lambda\in\sigma(\L_Q)$. Further,
varying $\omega$ or $\rho$ in \eqref{equ:6.17} shows that $\lambda$ is not isolated, so that $\lambda\notin\sigma_{\mathrm{pt}}(\L_Q)$ and 
therefore $\lambda\in\sigma_{\mathrm{ess}}(\L_Q)$. It remains 
to verify
\begin{align*}
  \left\|v_R\right\|_{L^p}^p\geqslant CR^d,\quad\; \left\|\left(\lambda I-\L_Q\right)v_R\right\|_{L^p}^p\leqslant CR^{d-1}+CR^d\eta_R.
\end{align*}

\begin{itemize}[leftmargin=0.43cm]\setlength{\itemsep}{0.1cm}
\item[1.] The property $\chi_R(r)=0$ for $r\in I_1\cup I_5$ implies
\begin{align}
  \label{equ:EssentialSpectrumProperty1}
  (\lambda I-\L_{\infty}^{\mathrm{sim}})\hat{v}_R(\psi) = 0,\quad\text{if $|\tilde{y}|\in I_1\cup I_5$ or $r_l\in I_1\cup I_5$ for some $1\leqslant l\leqslant k$.}
\end{align}
Similarly, $\lambda\in\sigma_{\mathrm{disp}}(\L_Q)$ and $\chi_R''(r)=\chi_R'(r)=0$ for $r\in I_3$ imply
\begin{align}
  \label{equ:EssentialSpectrumProperty2}
  (\lambda I-\L_{\infty}^{\mathrm{sim}})\hat{v}_R(\psi) = 0,\quad\text{if $|\tilde{y}|\in I_3$ and $r_l\in I_3$ for every $1\leqslant l\leqslant k$.}
\end{align}
Next compute the partial derivatives 
\begin{align*}
  &\partial_{r_l}^2 \hat{v}_R(\psi) = \bigg[\frac{\chi_R''(r_l)}{\chi_R(r_l)}+2i\omega_l\frac{\chi_R'(r_l)}{\chi_R(r_l)}-\omega_l^2\bigg]\hat{v}_R(\psi),\,l=1,\ldots,k, \\
  &\partial_{y_l}^2 \hat{v}_R(\psi) = \bigg[\frac{y_l^2}{|\tilde{y}|^2}\frac{\chi_R''(|\tilde{y}|)}{\chi_R(|\tilde{y}|)}+\bigg(\frac{|\tilde{y}|^2-y_l^2}{|\tilde{y}|^3}+2i\rho_l\frac{y_l}{|\tilde{y}|}\bigg)\frac{\chi_R'(|\tilde{y}|)}{\chi_R(|\tilde{y}|)}-\rho_l^2\bigg]\hat{v}_R(\psi),\,l=2k+1,\ldots,d,
\end{align*}
and consider the case $|\tilde{y}|\in I_2\cup I_3\cup I_4$ and $r_l\in I_2\cup I_3\cup I_4$ for all $1\leqslant l\leqslant k$.
Then we use $\lambda\in\sigma_{\mathrm{disp}}(\L_Q)$ and the estimates $\left|\chi_R(r)\right|\leqslant 1$, $\chi_R'(r)\leqslant\left\|\chi_R\right\|_{C^2_{\mathrm{b}}}$, 
$\chi_R''(r)\leqslant\left\|\chi_R\right\|_{C^2_{\mathrm{b}}}$, $\left|\hat{v}(\psi)\right|=1$,  $\big|\tfrac{\hat{v}_R(\psi)}{\chi_R(r_l)}\big|\leqslant 1$, 
 $\big|\tfrac{\hat{v}_R(\psi)}{\chi_R(|\tilde{y}|)}\big|\leqslant 1$, $\frac{1}{|\tilde{y}|}\leqslant\frac{1}{R-1}\leqslant 1$ to obtain
\begin{align}
           & \left|\left(\lambda I-\L_{\infty}^{\mathrm{sim}}\right)v_{R}(\psi)\right| 
          =  \bigg|\bigg(\lambda I-A\bigg[\sum_{l=1}^{k}\partial_{r_l}^2+\sum_{l=2k+1}^{d}\partial_{y_l}^2\bigg]+\sum_{l=1}^{k}\sigma_l\partial_{\phi_l}+B_{\infty}\bigg)\hat{v}_R(\psi)\bigg|\nonumber\\
          =& \bigg|\Big(\lambda I_m-(|\omega|^2+|\rho|^2)A+i\langle n,\sigma\rangle+B_{\infty}\Big)v_{R}(\psi) 
             -A\sum_{l=1}^{k}\left(\chi_R''(r_l)+2i\omega_l\chi_R'(r_l)\right)\frac{\hat{v}_R(\psi)}{\chi_R(r_l)} \nonumber\\
           & -A\sum_{l=2k+1}^{d}\bigg(\frac{y_l^2}{|\tilde{y}|^2}\chi_R''(|\tilde{y}|)+\bigg(\frac{|\tilde{y}|^2-y_l^2}{|\tilde{y}|^3}+2i\rho_l\frac{y_l}{|\tilde{y}|}\bigg)\chi_R'(|\tilde{y}|)\bigg)\frac{\hat{v}_R(\psi)}{\chi_R(|\tilde{y}|)}\bigg| \label{equ:step1}\\
  \leqslant& |A|\sum_{l=1}^{k}\left(1+2|\omega_l|\right)\left\|\chi_R\right\|_{C^2_{\mathrm{b}}}
             +|A|\sum_{l=2k+1}^{d}\left(3+2|\rho_l|\right)\left\|\chi_R\right\|_{C^2_{\mathrm{b}}} \nonumber\\
  \leqslant& |A|\left(k+2|\omega|\sqrt{k}+3(d-2k)+2|\rho|\sqrt{d-2k}\right)\left\|\chi_R\right\|_{C^2_{\mathrm{b}}} =:C.\nonumber
\end{align}

\item[2.] Transforming variables, setting $\langle \mathbf{r} \rangle =\prod_{l=1}^k r_l$ and using $|\hat{v}(\psi)|=1$, $\chi_R(r)\geqslant 0$ $(r\in I_2\cup I_4)$, 
$\chi_R(r)=1$ $(r\in I_3)$ leads to
\begin{align*}
           & \left\|v_R\right\|_{L^p}^p 
          =  \int_{\R^d}\left|v_R(x)\right|^p dx 
          = \int_{0}^{\infty}\int_{-\pi}^{\pi}\cdots\int_{0}^{\infty}\int_{-\pi}^{\pi}\int_{\R^{d-2k}}\langle \mathbf{r} \rangle\left|\hat{v}_R(\psi)\right|^p d\psi \\
          =& \int_{R-1}^{2R+1}\int_{-\pi}^{\pi}\cdots\int_{R-1}^{2R+1}\int_{-\pi}^{\pi}\int_{R-1\leqslant|\tilde{y}|\leqslant 2R+1}\langle \mathbf{r} \rangle\left|\hat{v}_R(\psi)\right|^p d\psi \\
          =& \int_{R-1}^{2R+1}\int_{-\pi}^{\pi}\cdots\int_{R-1}^{2R+1}\int_{-\pi}^{\pi}\int_{R-1\leqslant|\tilde{y}|\leqslant 2R+1}\langle \mathbf{r} \rangle\bigg(\prod_{l=1}^{k}\chi_R^p(r_l)\bigg)
             \chi_R^p(|\tilde{y}|) d\psi \\
          =& \int_{R-1\leqslant|\tilde{y}|\leqslant 2R+1}\chi_R^p(|\tilde{y}|)d\tilde{y}\prod_{l=1}^{k}\int_{R-1}^{2R+1}\int_{-\pi}^{\pi}r_l\chi_R^p(r_l)d\phi_l dr_l \\
          =& \bigg(\int_{R-1\leqslant|\tilde{y}|\leqslant R}\chi_R^p(|\tilde{y}|)d\tilde{y}+\int_{R\leqslant|\tilde{y}|\leqslant 2R}\chi_R^p(|\tilde{y}|)d\tilde{y}
             +\int_{2R\leqslant|\tilde{y}|\leqslant 2R+1}\chi_R^p(|\tilde{y}|)d\tilde{y}\bigg) \\
           & \cdot\prod_{l=1}^{k}2\pi\bigg(\int_{R-1}^{R}r_l\chi_R^p(r_l)dr_l+\int_{R}^{2R}r_l\chi_R^p(r_l)dr_l+\int_{2R}^{2R+1}r_l\chi_R^p(r_l)dr_l\bigg) \\
  \geqslant& \bigg(\int_{R\leqslant|\tilde{y}|\leqslant 2R}1d\tilde{y}\bigg)\cdot\bigg(\prod_{l=1}^{k}2\pi\int_{R}^{2R}r_ldr_l\bigg)
          =  CR^{\tilde{d}}\prod_{l=1}^{k}3\pi R^2 = (3\pi)^k C R^{2k+\tilde{d}} = CR^{d},
\end{align*}
where $C$ is independent of $R$, $d\psi:=d\tilde{y}d\phi_k dr_k\cdots d\phi_1 dr_1$ and $\tilde{d}:=d-2k$. In the trivial case $\tilde{d}=0$ the first
integral is set to $1$, while in case $\tilde{d}\geqslant 1$ the term
$CR^{\tilde{d}}$ follows from the well-known formula
\begin{align}
  \label{equ:EssentialSpectrumProperty3}
  \tilde{d}\Gamma\big(\tfrac{\tilde{d}}{2}\big)\int_{a\leqslant|\tilde{y}|\leqslant b}1d\tilde{y}= 
2\pi^{\frac{\tilde{d}}{2}}
(b^{\tilde{d}}-a^{\tilde{d}}) \quad \text{for}\; 0< a < b < \infty.
\end{align}
\item[3.] The transformation theorem and \eqref{equ:EssentialSpectrumProperty1} imply
\begin{align*}
  & \left\|\left(\lambda I-\L_{\infty}^{\mathrm{sim}}\right)v_R\right\|_{L^p}^p
 =  \int_{\R^d}\left|\left(\lambda I-\L_{\infty}^{\mathrm{sim}}\right)v_R(x)\right|^p dx \\
 =& \int_{0}^{\infty}\int_{-\pi}^{\pi}\cdots\int_{0}^{\infty}\int_{-\pi}^{\pi}\int_{\R^{d-2k}}\langle \mathbf{r} \rangle\left|\left(\lambda I-\L_{\infty}^{\mathrm{sim}}\right)\hat{v}_R(\psi)\right|^p
    d\psi\\
 =& \int_{R-1}^{2R+1}\int_{-\pi}^{\pi}\cdots\int_{R-1}^{2R+1}\int_{-\pi}^{\pi}\int_{R-1\leqslant|\tilde{y}|\leqslant 2R+1} \langle \mathbf{r} \rangle 
    \left|\left(\lambda I-\L_{\infty}^{\mathrm{sim}}\right)\hat{v}_R(\psi)\right|^p d\psi.
\end{align*}
 We distinguish the following cases for $\tilde{d}=d-2k$.\\
\textbf{Case 1:} ($\tilde{d}=0$). From \eqref{equ:step1}, \eqref{equ:EssentialSpectrumProperty2}, the multinomial theorem and
\begin{align}
  \label{equ:EssentialSpectrumProperty4}
  \int_{R-1}^{R}r_l dr_l=\frac{1}{2}(2R-1),\quad\; \int_{R}^{2R}r_l dr_l=\frac{1}{2}3R^2,\quad\; \int_{2R}^{2R+1}r_l dr_l=\frac{1}{2}(4R+1),
\end{align}
we further obtain
\begin{align*}
          =& \int_{R-1}^{2R+1}\int_{-\pi}^{\pi}\cdots\int_{R-1}^{2R+1}\int_{-\pi}^{\pi}\
          \langle \mathbf{r} \rangle 
             \left|\left(\lambda I-\L_{\infty}^{\mathrm{sim}}\right)\hat{v}_R(\psi)\right|^p d\phi_k dr_k\cdots d\phi_1 dr_1 \\
  \leqslant& \sum_{j_1+j_2+j_3=k\atop j_2\neq k}\begin{pmatrix}k\\j_1,j_2,j_3\end{pmatrix}\left(\int_{R-1}^{R}\right)^{j_1}\left(\int_{R}^{2R}\right)^{j_2}\left(\int_{2R}^{2R+1}\right)^{j_3}
             C^p \langle \mathbf{r} \rangle (2\pi)^k dr_1\cdots dr_k \\
          =& \sum_{j_1+j_2+j_3=k\atop j_2\neq k}\begin{pmatrix}k\\j_1,j_2,j_3\end{pmatrix} \frac{C^p (2\pi)^k}{2^k} (2R-1)^{j_1}(3R^2)^{j_2}(4R+1)^{j_3}
  \leqslant  CR^{d-1}.
\end{align*}
For the last inequality we estimate powers of $R$ by
$j_1+2 j_2+j_3 = k+j_2 \leqslant 2k-1=d-1$ for $j_2 \neq k$.

\noindent
\textbf{Case 2:} ($\tilde{d}\geqslant 1$). Similarly, using \eqref{equ:step1} and
\eqref{equ:EssentialSpectrumProperty2}, \eqref{equ:EssentialSpectrumProperty3}, the multinomial theorem gives (abbreviating $d\mathbf{r}:=dr_1\cdots dr_k$ )
\begin{align*}
  \leqslant& \sum_{j_1+j_2+j_3=k}\begin{pmatrix}k\\j_1,j_2,j_3\end{pmatrix}\left(\int_{R-1}^{R}\right)^{j_1}\left(\int_{R}^{2R}\right)^{j_2}\left(\int_{2R}^{2R+1}\right)^{j_3} 
           \int_{R-1\leqslant|\tilde{y}|\leqslant R}C^p \langle \mathbf{r} \rangle (2\pi)^k d\tilde{y}d\mathbf{r} \\
           &+\sum_{j_1+j_2+j_3=k\atop j_2\neq k}\begin{pmatrix}k\\j_1,j_2,j_3\end{pmatrix}\left(\int_{R-1}^{R}\right)^{j_1}\left(\int_{R}^{2R}\right)^{j_2}\left(\int_{2R}^{2R+1}\right)^{j_3} 
           \int_{R\leqslant|\tilde{y}|\leqslant 2R}C^p \langle \mathbf{r} \rangle (2\pi)^k d\tilde{y}d\mathbf{r} \\
           &+\sum_{j_1+j_2+j_3=k}\begin{pmatrix}k\\j_1,j_2,j_3\end{pmatrix}\left(\int_{R-1}^{R}\right)^{j_1}\left(\int_{R}^{2R}\right)^{j_2}\left(\int_{2R}^{2R+1}\right)^{j_3} 
           \int_{2R\leqslant|\tilde{y}|\leqslant 2R+1}C^p \langle \mathbf{r} \rangle (2\pi)^k d\tilde{y}d\mathbf{r} \\
          =& \sum_{j_1+j_2+j_3=k}\begin{pmatrix}k\\j_1,j_2,j_3\end{pmatrix}\frac{C^p (2\pi)^k}{2^k}(2R-1)^{j_1}(3R^2)^{j_2}(4R+1)^{j_3} 
           \frac{2\pi^{\frac{\tilde{d}}{2}}}{\tilde{d}\Gamma\left(\frac{\tilde{d}}{2}\right)}(R^{\tilde{d}}-(R-1)^{\tilde{d}}) \\
           &+\sum_{j_1+j_2+j_3=k\atop j_2\neq k}\begin{pmatrix}k\\j_1,j_2,j_3\end{pmatrix}\frac{C^p (2\pi)^k}{2^k}(2R-1)^{j_1}(3R^2)^{j_2}(4R+1)^{j_3} 
           \frac{2\pi^{\frac{\tilde{d}}{2}}}{\tilde{d}\Gamma\left(\frac{\tilde{d}}{2}\right)}((2R)^{\tilde{d}}-R^{\tilde{d}}) \\
           &+\sum_{j_1+j_2+j_3=k}\begin{pmatrix}k\\j_1,j_2,j_3\end{pmatrix}\frac{C^p (2\pi)^k}{2^k}(2R-1)^{j_1}(3R^2)^{j_2}(4R+1)^{j_3} 
           \frac{2\pi^{\frac{\tilde{d}}{2}}}{\tilde{d}\Gamma\left(\frac{\tilde{d}}{2}\right)}((2R+1)^{\tilde{d}}-(2R)^{\tilde{d}}) \\
  \leqslant& \sum_{j_1+j_2+j_3=k}\begin{pmatrix}k\\j_1,j_2,j_3\end{pmatrix}CR^{j_1+2j_2+j_3+\tilde{d}-1} 
            + \sum_{j_1+j_2+j_3=k\atop j_2\neq k}\begin{pmatrix}k\\j_1,j_2,j_3\end{pmatrix}CR^{j_1+2j_2+j_3+\tilde{d}}\\
           &+\sum_{j_1+j_2+j_3=k}\begin{pmatrix}k\\j_1,j_2,j_3\end{pmatrix}CR^{j_1+2j_2+j_3+\tilde{d}-1} 
  \leqslant CR^{d-1}.
\end{align*}
This shows that $\|(\lambda I-\L_{\infty}^{\mathrm{sim}})v_R\|_{L^p}^p \leqslant CR^{d-1}$.
\item[4.] For the operator $\hat{\L}_{Q}=\hat{\L}_{\infty}+Q(T_1(T_2(\psi)))$,
equation \eqref{equ:EssentialSpectrumProperty1} and $\chi_R(r)=0$ for $r\in I_1\cup I_5$ imply
\begin{align}
  \label{equ:EssentialSpectrumProperty5}
  (\lambda I-\hat{\L}_{Q})\hat{v}_R(\psi)=0,\quad\text{if $|\tilde{y}|\in I_1\cup I_5$ or $r_l\in I_1\cup I_5$ for some $1\leqslant l\leqslant k$.}
\end{align}
Moreover, if $|\tilde{y}|\in I_3$ and $r_l\in I_3$ for every $1\leqslant l\leqslant k$, then we obtain from \eqref{equ:EssentialSpectrumProperty2}, $\chi_R'(r)\leqslant\left\|\chi_R\right\|_{C^2_{\mathrm{b}}}$, $|\hat{v}_R(\psi)|\leqslant 1$, and 
$\frac{1}{r_l}\leqslant\frac{1}{R}\leqslant 1$,
\begin{align*}
           & |(\lambda I-\hat{\L}_{Q})\hat{v}_R(\psi)| 
          =  \bigg|\left(\lambda I-\L_{\infty}^{\mathrm{sim}}\right)\hat{v}_R(\psi) - A\sum_{l=1}^{k}\left(\frac{1}{r_l}\partial_{r_l}+\frac{1}{r_l^2}\partial_{\phi_l}^2\right)\hat{v}_R(\psi)
             -Q(T_1(T_2(\psi)))\hat{v}_R(\psi) \bigg| \\
          =& \bigg|A\sum_{l=1}^{k}\bigg(\frac{i\omega_l}{r_l}+\frac{\chi_R'(r_l)}{r_l\chi_R(r_l)}-\frac{n_l^2}{r_l^2}\bigg)\hat{v}_R(\psi)+Q(T_1(T_2(\psi)))\hat{v}_R(\psi) \bigg| \\
  \leqslant& |A| \sum_{l=1}^{k}\bigg(\frac{|\omega_l|}{r_l}+\frac{\left\|\chi_R\right\|_{C^2_{\mathrm{b}}}}{r_l}+\frac{n_l^2}{r_l^2}\bigg) + \left|Q(T_1(T_2(\psi)))\right| 
  \leqslant \bigg(|A| \sum_{l=1}^{k}\left(|\omega_l|+\left\|\chi_R\right\|_{C^2_{\mathrm{b}}}+n_l^2\right)\frac{1}{r_l}+\eta_R\bigg)^{\frac{1}{p}}.
\end{align*}
Similarly, from \eqref{equ:step1}, $\left|\chi_R(r)\right|\leqslant 1$, $\chi_R'(r)\leqslant\left\|\chi_R\right\|_{C^2_{\mathrm{b}}}$,  
$\left|\hat{v}_R(\psi)\right|\leqslant 1$, $\big|\tfrac{\hat{v}_R(\psi)}{\chi_R(r_l)}\big|\leqslant 1$, $\frac{1}{r_l}\leqslant\frac{1}{R-1}\leqslant 1$, $\frac{1}{r_l^2}\leqslant 1$,
and $Q\in L^{\infty}$ we find in case $|\tilde{y}|\in I_2\cup I_3\cup I_4$ and $r_l\in I_2\cup I_3\cup I_4$ for every $1\leqslant l\leqslant k$:
\begin{align*}
           & |(\lambda I-\hat{\L}_{Q})\hat{v}_R(\psi)| 
          =  \bigg|\left(\lambda I-\L_{\infty}^{\mathrm{sim}}\right)\hat{v}_R(\psi) - A\sum_{l=1}^{k}\left(\frac{1}{r_l}\partial_{r_l}+\frac{1}{r_l^2}\partial_{\phi_l}^2\right)\hat{v}_R(\psi)
             -Q(T_1(T_2(\psi)))\hat{v}_R(\psi) \bigg| \\
          =& \bigg|\left(\lambda I-\L_{\infty}^{\mathrm{sim}}\right)\hat{v}_R(\psi)-A\sum_{l=1}^{k}\bigg(\frac{i\omega_l}{r_l}+\frac{\chi_R'(r_l)}{r_l\chi_R(r_l)}-\frac{n_l^2}{r_l^2}\bigg)\hat{v}_R(\psi)
             -Q(T_1(T_2(\psi)))\hat{v}_R(\psi) \bigg| \\
  \leqslant& \Big|\left(\lambda I-\L_{\infty}^{\mathrm{sim}}\right)\hat{v}_R(\psi)\Big| + |A|\sum_{l=1}^{k}\bigg(\frac{|\omega_l|}{r_l}+\frac{\left\|\chi_R\right\|_{C^2_{\mathrm{b}}}}{r_l}+\frac{n_l^2}{r_l^2}\bigg) + \Big|Q(T_1(T_2(\psi)))\Big| \\
  \leqslant& C + |A|\Big(|\omega|\sqrt{k}+k\left\|\chi_R\right\|_{C^2_{\mathrm{b}}}+|n|^2\Big) + \left\|Q\right\|_{L^{\infty}}
          =  C.
\end{align*}

\item[5.] Finally, let us consider $(\lambda I-\L_{Q})v_R$ in $L^p$. From the transformation theorem and \eqref{equ:EssentialSpectrumProperty5} we obtain
\begin{align*}
   & \|(\lambda I-\L_{Q})v_R\|_{L^p}^p 
  =  \int_{\R^d} |(\lambda I-\L_Q)v_R(x)|^p dx \\
  =& \int_{0}^{\infty}\int_{-\pi}^{\pi}\cdots\int_{0}^{\infty}\int_{-\pi}^{\pi}\int_{\R^{d-2k}}\langle \mathbf{r} \rangle |(\lambda I-\hat{\L}_{Q})\hat{v}_R(\psi)|^p d\psi \\
  =& \int_{R-1}^{2R+1}\int_{-\pi}^{\pi}\cdots\int_{R-1}^{2R+1}\int_{-\pi}^{\pi}\int_{R-1\leqslant|\tilde{y}|\leqslant 2R+1}\langle \mathbf{r} \rangle |(\lambda I-\hat{\L}_{Q})\hat{v}_R(\psi)|^p d\psi.
\end{align*}
Again we distinguish two cases for $\tilde{d}:=d-2k$:\\
\textbf{Case 1:} ($\tilde{d}=0$). From step 4, equation \eqref{equ:EssentialSpectrumProperty4}, and $d=2k$ we deduce
\begin{align*}
  \leqslant& \int_{R}^{2R}\int_{-\pi}^{\pi}\cdots\int_{R}^{2R}\int_{-\pi}^{\pi}\langle \mathbf{r} \rangle \left[|A|\sum_{l=1}^{k}(|\omega|+\left\|\chi_R\right\|_{C^2_{\mathrm{b}}}+n_l^2)
\frac{1}{r_l}+\eta_R\right]d\phi_k dr_k\cdots d\phi_1 dr_1 \\
           & +\sum_{j_1+j_2+j_3\atop j_2\neq k}\begin{pmatrix}k\\j_1,j_2,j_3\end{pmatrix}\left(\int_{R-1}^{R}\right)^{j_1}\left(\int_{R}^{2R}\right)^{j_2}\left(\int_{2R}^{2R+1}\right)^{j_3}C^p \langle \mathbf{r} \rangle (2\pi)^k dr_1\cdots dr_k \\
  \leqslant& (2\pi)^k\int_{R}^{2R}\cdots\int_{R}^{2R}\Bigg[|A|\Bigg(\sum_{l=1}^{k}\Bigg(\prod_{j=1\atop j\neq l}^{k}r_j\Bigg)\left(|\omega_l|+\left\|\chi_R\right\|_{C^2_{\mathrm{b}}}+n_l^2\right)\Bigg)
+\langle \mathbf{r} \rangle \eta_R\Bigg] d\mathbf{r} +CR^{d-1} \\
          =& (2\pi)^k\Bigg[|A|\sum_{l=1}^{k}\left(|\omega_l|+\left\|\chi_R\right\|_{C^2_{\mathrm{b}}}+n_l^2\right)\int_{R}^{2R}\cdots\int_{R}^{2R}\Bigg(\prod_{j=1\atop j\neq l}^{k}r_j\Bigg) d\mathbf{r} 
             +\eta_R \int_{R}^{2R}\cdots\int_{R}^{2R} \langle \mathbf{r} \rangle d\mathbf{r}\bigg] + CR^{d-1} \\
          =& (2\pi)^k|A|\sum_{l=1}^{k}\left(|\omega_l|+\left\|\chi_R\right\|_{C^2_{\mathrm{b}}}+n_l^2\right)\Bigg(\prod_{j=1\atop j\neq l}^{k}\int_{R}^{2R}r_j dr_j\Bigg)\int_{R}^{2R}dr_l 
             +(2\pi)^k \eta_R \prod_{j=1}^{k}\int_{R}^{2R}r_j dr_j +CR^{d-1} \\
          =& (2\pi)^k|A|\left(\sum_{l=1}^{k}\left(|\omega_l|+\left\|\chi_R\right\|_{C^2_{\mathrm{b}}}+n_l^2\right)\left(\frac{3}{2}\right)^{k-1}R^{2k-1}\right) 
             +(2\pi)^k \eta_R \left(\frac{3}{2}\right)^k R^{2k} +CR^{d-1} \\
  \leqslant& CR^{d-1}+CR^{d}\eta_R.
\end{align*}
For the first inequality we refer to case 1 of step 3.

\noindent
\textbf{Case 2:} ($\tilde{d}\geqslant 1$). From the procedure used in case 2 of step 5 and in case 1 and \eqref{equ:EssentialSpectrumProperty3} we obtain
\begin{align*}
  \leqslant& \int_{R}^{2R}\int_{-\pi}^{\pi}\cdots\int_{R}^{2R}\int_{-\pi}^{\pi}\int_{R\leqslant|\tilde{y}|\leqslant 2R}\langle \mathbf{r} \rangle \left[|A|\sum_{l=1}^{k}(|\omega_l|+\left\|\chi_R\right\|_{C^2_{\mathrm{b}}}+n_l^2)
\frac{1}{r_l}+\eta_R\right]
             d\psi +CR^{d-1} \\
 \leqslant& \left(CR^{2k-1}+CR^{2k}\eta_R\right)\int_{R\leqslant|\tilde{y}|\leqslant 2R}d\tilde{y} + CR^{d-1} 
 \leqslant  CR^{2k-1+\tilde{d}}+CR^{d-1}+CR^{2k+\tilde{d}}\eta_R \\
         =&  CR^{d-1} + CR^{d}\eta_R. 
\end{align*}
The constant $CR^{d-1}$ in the first inequality comes from an estimate of three sums, compare case 2 from step 3. For the second inequality compare case 1. 
This shows that $\|(\lambda I-\L_{Q})v_R\|_{L^p}^p \leqslant CR^{d-1} + CR^{d}\eta_R$.
\end{itemize}
\end{proof}

We now apply Theorem \ref{thm:EssentialSpectrumOfLQ} to \eqref{equ:EssSpecSettingLRW}
which provides us the essential $L^p$-spectrum of the linearization $\L$ from \eqref{equ:1.8}. 
We emphasize that the first condition in \eqref{equ:PropertyQ} requires $v_{\star}\in C_{b}(\R^d,\R^m)$ and $f\in C^2(\R^m,\R^m)$, whereas the second condition 
in \eqref{equ:PropertyQ} even requires $|v_{\star}(x)-v_{\infty}|\to 0$ as $|x|\to\infty$. The convergence follows from an application of \cite[Cor.~4.3]{BeynOtten2016a}, 
which provides us pointwise exponential decay estimates for $|v_{\star}(x)-v_{\infty}|$ as $|x|\to\infty$, but (depending on $d$ and $p$) \cite[Cor.~4.3]{BeynOtten2016a} 
requires more smoothness on $v_{\star}$ and $f$.


\begin{proof}[Proof (of Theorem \ref{thm:EssSpecLRW})]
  The assertion directly follows from an application of Theorem \ref{thm:EssentialSpectrumOfLQ} to the matrices $B_{\infty}$ and $Q(x)$ from \eqref{equ:EssSpecSettingLRW}. 
  Since \eqref{cond:A4DC}, \eqref{cond:A5} are satisfied and since \eqref{cond:A8} implies \eqref{cond:A8B}, it remains to check \eqref{equ:PropertyQ}. 
  From Taylor's theorem we obtain
  \begin{align}
    \label{equ:PointwiseEstimateQ}
    \begin{split}
    \left|Q(x)\right| \leqslant \int_{0}^{1}\left|D^2f(v_{\infty}+s(v_{\star}(x)-v_{\infty}))\right|ds\left|v_{\star}(x)-v_{\infty}\right|\quad\forall\,x\in\R^d.
    \end{split}
  \end{align}
  Since $f\in C^2(\R^m,\R^m)$ and $v_{\star}\in C_{\mathrm{b}}(\R^d,\R^m)$, estimate \eqref{equ:PointwiseEstimateQ} implies boundedness of $Q$ on $\R^d$, 
  i.e. $Q\in L^{\infty}(\R^d,\R^{m,m})$, which proves the first condition in \eqref{equ:PropertyQ}. An application of \cite[Cor.~4.3]{BeynOtten2016a} 
  to $|\alpha|=0$ shows a pointwise exponential estimate
  \begin{equation}
    \label{equ:PointwiseEstimateVstar}
    \left|v_{\star}(x)-v_{\infty}\right| \leqslant C_1\exp\big(-\mu\sqrt{|x|^2+1}\big)\quad\forall\,x\in\R^d\;\forall\,\mu\in[0,\mu_{\mathrm{max}})\quad\text{with}\quad\mu_{\mathrm{max}}=\frac{\sqrt{\azero\bzero}}{\amax p},
  \end{equation}
  where $\amax=\rho(A)$ denotes the spectral radius of $A$, $-\azero=s(-A)$ the spectral bound of $-A$, and $-\bzero=s(Df(v_{\infty}))$ the spectral bound of $Df(v_{\infty})$.
  Combining \eqref{equ:PointwiseEstimateQ} and \eqref{equ:PointwiseEstimateVstar} yields
  \begin{align}
    \label{equ:PointwiseEstimateQ2}
    \begin{split}
    \left|Q(x)\right| \leqslant C\exp\big(-\mu\sqrt{|x|^2+1}\big)\quad\forall\,x\in\R^d\;\forall\,\mu\in[0,\mu_{\mathrm{max}})
    \end{split}
  \end{align}
  with $C=C_1 \sup_{|y-v_{\infty}|\leqslant C_1}|D^2 f(y)|$. 
  Take a fixed $\mu\in(0,\mu_{\mathrm{max}})$ so that \eqref{equ:PointwiseEstimateQ2} implies
  \begin{equation*}
    \eta_R:=\underset{|x|\geqslant R}{\esssup}\left|Q(x)\right|\leqslant C\,\underset{|x|\geqslant R}{\esssup}\exp\left(-\mu\sqrt{|x|^2+1}\right)
    = C \exp\left(-\mu\sqrt{R^2+1}\right)\to 0\quad\text{as}\quad R\to\infty.
  \end{equation*}
  This proves the second condition in \eqref{equ:PropertyQ}.
\end{proof}

%
%
\sect{Application of Fredholm theory in $L^p(\R^d,\C^m)$}
\label{sec:4}

\subsection{Fredholm operator of index $0$}
\label{subsec:4.2}
In this section we show that the differential operator 
\begin{equation*}
  \lambda I-\L_Q:\left(\D^p_{\mathrm{loc}}(\L_0),\left\|\cdot\right\|_{\L_0}\right)\rightarrow \left(L^p(\R^d,\C^m),\left\|\cdot\right\|_{L^p}\right),\quad 1<p<\infty
\end{equation*} 
from \eqref{equ:1.12a}--\eqref{equ:4.4} is Fredholm of index $0$ provided that $\Re\lambda>-\bzero$. The matrix-valued function $Q\in C(\R^d,\C^{m,m})$ is 
assumed to be \begriff{asymptotically small}, i.e. for $|x|$ large it falls
below a certain computable threshold similar to \eqref{equ:BoundednessConditionForVStar}.
We further need the following Lemma (see \cite[Lem.~4.1]{BeynLorenz2008} for $p=2$)
which is a consequence of Sobolev imbedding and the compactness criterion
in $L^p$-spaces. 

\begin{lemma}[Compactness of multiplication operator]\label{lem:4.2}
  Let
  \begin{equation}
    \label{equ:4.5}
    M\in C(\R^d,\C^{m,m})\quad\text{with}\quad \lim_{R\to\infty}\sup_{|x|\geqslant R}\left|M(x)\right|\rightarrow 0.
  \end{equation}
  Then the operator of multiplication
  \begin{equation*}
    \widetilde{M}:(W^{1,p}(\R^d,\C^m),\left\|\cdot\right\|_{W^{1,p}})\rightarrow (L^p(\R^d,\C^m),\left\|\cdot\right\|_{L^p}),\quad u(\cdot)\longmapsto \widetilde{M}u(\cdot) := M(\cdot)u(\cdot),
  \end{equation*}
  is compact for any $1<p<\infty$.
\end{lemma}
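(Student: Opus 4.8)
The plan is to verify directly that $\widetilde M$ sends the closed unit ball $\overline B$ of $W^{1,p}(\R^d,\C^m)$ to a relatively compact subset of $L^p(\R^d,\C^m)$, using the Kolmogorov--Riesz compactness criterion. I would first record two elementary consequences of \eqref{equ:4.5}: since $M$ is continuous with $\sup_{|x|\ge R}|M(x)|\to 0$, it is bounded on $\R^d$, so $\widetilde M$ is a bounded operator from $W^{1,p}$ into $L^p$; and $M$ is uniformly continuous on $\R^d$, being uniformly continuous on each ball by Heine--Cantor and uniformly small outside a large ball.

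Then I would check the three hypotheses of the Kolmogorov--Riesz theorem for the family $\{\widetilde M u:\,u\in\overline B\}$. Uniform $L^p$-boundedness is immediate from $\|\widetilde M u\|_{L^p}\le\|M\|_{L^\infty}\|u\|_{L^p}\le\|M\|_{L^\infty}$. Uniform decay at infinity follows from
\[
  \int_{|x|\ge R}|M(x)u(x)|^p\,dx\le\Big(\sup_{|x|\ge R}|M(x)|\Big)^p\|u\|_{L^p}^p\le\Big(\sup_{|x|\ge R}|M(x)|\Big)^p\xrightarrow[R\to\infty]{}0,
\]
uniformly in $u\in\overline B$. For translation equicontinuity, writing $\tau_h g(x):=g(x+h)$ and splitting
\[
  \|\tau_h(Mu)-Mu\|_{L^p}\le\|M\|_{L^\infty}\,\|\tau_h u-u\|_{L^p}+\|\tau_h M-M\|_{L^\infty}\,\|u\|_{L^p},
\]
the first summand is bounded by $\|M\|_{L^\infty}\,|h|\,\|\nabla u\|_{L^p}\le\|M\|_{L^\infty}|h|$ via the standard difference-quotient estimate $\|\tau_h u-u\|_{L^p}\le|h|\,\|\nabla u\|_{L^p}$ on $W^{1,p}$, and the second by $\|\tau_h M-M\|_{L^\infty}\to 0$ as $|h|\to0$ by uniform continuity of $M$; both bounds are uniform for $u\in\overline B$. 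Hence $\widetilde M(\overline B)$ is relatively compact in $L^p(\R^d,\C^m)$, i.e.\ $\widetilde M$ is compact, for every $1<p<\infty$.

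As an alternative I would mention a truncation argument: write $M_R$ for the truncation of $M$ to the ball $\{|x|\le R\}$ (multiply $M$ by a continuous cutoff that is $1$ there and $0$ outside $\{|x|\le 2R\}$); then $\|\widetilde M-\widetilde{M_R}\|_{\mathcal L(W^{1,p},L^p)}\le\|M-M_R\|_{L^\infty}\to 0$ by \eqref{equ:4.5}, so since the compact operators form a closed subspace it suffices to treat compactly supported $M_R$, where compactness is the composition of the bounded restriction $W^{1,p}(\R^d)\to W^{1,p}(B)$, the Rellich--Kondrachov compact embedding $W^{1,p}(B)\hookrightarrow\hookrightarrow L^p(B)$ on the supporting ball $B$, multiplication by the bounded function $M_R$, and extension by zero $L^p(B)\to L^p(\R^d)$. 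I do not expect a genuine obstacle here: the statement is a repackaging of standard compactness tools, and the only points that require a word of justification are the uniform continuity of $M$ (needed for translation equicontinuity) and the $W^{1,p}$ difference-quotient bound.
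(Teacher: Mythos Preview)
Your proposal is correct. The paper does not actually spell out a proof of this lemma; it only remarks that the result is ``a consequence of Sobolev imbedding and the compactness criterion in $L^p$-spaces'' and refers to \cite[Lem.~4.1]{BeynLorenz2008} for the case $p=2$. Both of your arguments --- the direct Kolmogorov--Riesz verification and the truncation-plus-Rellich--Kondrachov route --- are precisely the standard ways to flesh out that hint, so your approach matches what the paper has in mind.
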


We are now ready to prove that $\lambda I-\L_Q$ is Fredholm of index $0$.

\begin{theorem}\label{thm:4.3}
  Let the assumptions \eqref{cond:A4DC}, \eqref{cond:A5} and \eqref{cond:A8B} be satisfied for $\K=\C$ and for some $1<p<\infty$. 
  Moreover, let $\lambda\in\C$ with $\Re\lambda\geqslant-\bzero+\gamma$ for some $\gamma>0$, let $0<\varepsilon<1$, and let $Q\in C(\R^d,\C^{m,m})$ satisfy
  \begin{equation}
    \label{equ:PropertyQ3}
    \sup_{|x|\geqslant R_0}|Q(x)|\leqslant\frac{\varepsilon\gamma}{2}\min\left\{\frac{1}{\kappa\aone},\frac{1}{C_{0,\varepsilon}}\right\}\quad\text{for some $R_0>0$,}
  \end{equation}
  where $-\bzero=s(-B_{\infty})$ denotes the spectral bound of $-B_{\infty}$, $\kappa=\mathrm{cond}(Y)$ the condition number of $Y$ from \eqref{cond:A8B}, 
  $\aone$ the constant from \eqref{equ:aminamaxazerobzero}, and $C_{0,\varepsilon}=C_{0,\varepsilon}(d,p,\varepsilon,\kappa,\aone)>0$ the constant from 
  \cite[Thm.~2.10]{BeynOtten2016a}. Then, the operator
  \begin{align*}
    \lambda I-\L_Q:(\D^p_{\mathrm{loc}}(\L_0),\left\|\cdot\right\|_{\L_0})\rightarrow (L^p(\R^d,\C^m),\left\|\cdot\right\|_{L^p})
  \end{align*}
  is Fredholm of index $0$. In particular, the operator $\lambda I-\L_Q$ has finite-dimensional kernel and cokernel, i.e.
  $\mathrm{dim}\,\mathcal{N}(\lambda I-\L_Q)<\infty$ and $\mathrm{codim}\,\mathcal{R}(\lambda I-\L_Q)<\infty$.
\end{theorem}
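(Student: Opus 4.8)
The plan is to carry out the factorization sketched around \eqref{equ:DecompositionLQ_intro}. Write $\lambda=\lambda_1+\lambda_2$ with $\lambda_2:=-\bzero+\gamma$ and $\lambda_1:=\lambda-\lambda_2$, so that $\Re\lambda_1\geqslant 0$ by hypothesis. Pick a cut-off $\theta\in C^{\infty}_{\mathrm{c}}(\R^d,\R)$ with $\theta\equiv 1$ on $\{|x|\leqslant R_0\}$ and set $Q_{\mathrm{c}}:=\theta Q\in C_{\mathrm{c}}(\R^d,\C^{m,m})$, $Q_{\mathrm{s}}:=(1-\theta)Q$, so that $Q=Q_{\mathrm{s}}+Q_{\mathrm{c}}$, $Q_{\mathrm{s}}$ is supported in $\{|x|\geqslant R_0\}$, and by \eqref{equ:PropertyQ3}
\[
  \left\|Q_{\mathrm{s}}\right\|_{L^{\infty}}\leqslant\frac{\varepsilon\gamma}{2}\min\left\{\frac{1}{\kappa\aone},\frac{1}{C_{0,\varepsilon}}\right\}.
\]
With $\L_{\mathrm{s}}$ as in \eqref{equ:Ls} and $\widetilde{\L}_{\mathrm{s}}:=\L_{\mathrm{s}}-\lambda_2 I$, the bounded perturbation $Q_{\mathrm{s}}$ leaves the domain unchanged, so $\widetilde{\L}_{\mathrm{s}}$ acts on $(\D^p_{\mathrm{loc}}(\L_0),\left\|\cdot\right\|_{\L_0})$, and one checks directly that $\lambda I-\L_Q=(\lambda_1 I-\widetilde{\L}_{\mathrm{s}})-Q_{\mathrm{c}}(\cdot)$, hence $\lambda I-\L_Q=(I-Q_{\mathrm{c}}(\cdot)(\lambda_1 I-\widetilde{\L}_{\mathrm{s}})^{-1})(\lambda_1 I-\widetilde{\L}_{\mathrm{s}})$ once the inverse in the middle is available.

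Next I would show that $\lambda_1 I-\widetilde{\L}_{\mathrm{s}}$ is a topological isomorphism from $(\D^p_{\mathrm{loc}}(\L_0),\left\|\cdot\right\|_{\L_0})$ onto $L^p(\R^d,\C^m)$. Note that $\widetilde{\L}_{\mathrm{s}}$ is again an operator of the form \eqref{equ:1.12a} with $B_{\infty}$ replaced by $B_{\infty}+\lambda_2 I$ and with small variable coefficient $Q_{\mathrm{s}}$; since $B_{\infty}+\lambda_2 I$ is still simultaneously diagonalizable with $A$ via the same $Y$ from \eqref{cond:A8B}, and $\min\Re\sigma(B_{\infty}+\lambda_2 I)=\bzero+\lambda_2=\gamma>0$, the spectral bound of $-(B_{\infty}+\lambda_2 I)$ equals $-\gamma<0$. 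Therefore the resolvent estimate for perturbed Ornstein--Uhlenbeck operators, \cite[Thm.~2.10]{BeynOtten2016a}, applies at the point $\lambda_1$ with $\Re\lambda_1\geqslant 0>-\gamma$ --- this is precisely the role of the threshold \eqref{equ:PropertyQ3}, whose two entries are the smallness bounds required there --- and yields that $\lambda_1 I-\widetilde{\L}_{\mathrm{s}}$ is bijective with bounded inverse. In particular $\lambda_1 I-\widetilde{\L}_{\mathrm{s}}$ is Fredholm of index $0$.

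Finally I would establish compactness of $K:=Q_{\mathrm{c}}(\cdot)(\lambda_1 I-\widetilde{\L}_{\mathrm{s}})^{-1}$ on $L^p(\R^d,\C^m)$. The inverse maps $L^p$ boundedly into $(\D^p_{\mathrm{loc}}(\L_0),\left\|\cdot\right\|_{\L_0})$, and by the continuous inclusion $\D^p_{\mathrm{loc}}(\L_0)\subset W^{1,p}(\R^d,\C^m)$ from \cite[Thm.~5.8 \& 6.8]{Otten2014}, \cite[Thm.~5.7]{Otten2014a} it maps $L^p$ boundedly into $W^{1,p}$; since $Q_{\mathrm{c}}$ has compact support it trivially satisfies \eqref{equ:4.5}, so $Q_{\mathrm{c}}(\cdot):W^{1,p}\to L^p$ is compact by Lemma \ref{lem:4.2}, whence $K$ is compact. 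Consequently $I-K$ is Fredholm of index $0$ on $L^p(\R^d,\C^m)$, and from the factorization $\lambda I-\L_Q=(I-K)(\lambda_1 I-\widetilde{\L}_{\mathrm{s}})$ --- a composition of a Fredholm operator of index $0$ with a topological isomorphism --- one concludes that $\lambda I-\L_Q$ is Fredholm of index $0$; the ``in particular'' assertion is then immediate from the definition of a Fredholm operator of index $0$. The only genuinely delicate point is the bookkeeping in the middle step: checking that the shifted matrix $B_{\infty}+\lambda_2 I$ together with $Q_{\mathrm{s}}$ meets the exact hypotheses of \cite[Thm.~2.10]{BeynOtten2016a}, and that the constant $C_{0,\varepsilon}$ occurring there is the same one appearing in \eqref{equ:PropertyQ3}; the remaining steps are a routine assembly of results already cited in the excerpt.
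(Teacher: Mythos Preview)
Your proposal follows essentially the same route as the paper: the same decomposition $\lambda=\lambda_1+\lambda_2$, the same splitting $Q=Q_{\mathrm{s}}+Q_{\mathrm{c}}$ via a cut-off at radius $R_0$, the same factorization \eqref{equ:DecompositionLQ_intro}, and the same two ingredients (isomorphism of $\lambda_1 I-\widetilde{\L}_{\mathrm{s}}$, compactness of $Q_{\mathrm{c}}(\cdot)(\lambda_1 I-\widetilde{\L}_{\mathrm{s}})^{-1}$ via Lemma~\ref{lem:4.2} and the embedding $\D^p_{\mathrm{loc}}(\L_0)\subset W^{1,p}$). The only discrepancy is in the citation for the isomorphism step: you invoke \cite[Thm.~2.10]{BeynOtten2016a}, which supplies the resolvent \emph{estimate} and the constant $C_{0,\varepsilon}$, but a resolvent estimate alone gives injectivity and boundedness of the inverse on the range, not surjectivity; the paper instead invokes \cite[Thm.~3.2]{BeynOtten2016a}, which is the unique solvability result for the small-perturbation operator and whose hypotheses (checked there as $\tilde b_0=\gamma>0$ and $\Re\lambda_1\geqslant 0>-(1-\varepsilon)\tilde b_0$) match your verification.
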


\begin{proof}
The proof follows our outline in formulas \eqref{equ:DecompositionLQ_intro},
\eqref{equ:Ls}.

  \begin{itemize}[leftmargin=0.43cm]\setlength{\itemsep}{0.1cm}
  \item[1.] Let us write $\lambda=\lambda_1+\lambda_2$ with $\lambda_2:=-\bzero+\gamma$ and $\lambda_1:=\lambda-\lambda_2$. Further, take cut-off function
  $\chi_1\in C_{\mathrm{c}}^{\infty}(\R^d,[0,1])$ with $\chi_1(x)= 1$ $(|x| \le 1)$, $\chi_1(x)=0$ $(|x| \geqslant 2)$, and define $\chi_R\in C_{\mathrm{c}}^{\infty}(\R^d,[0,1])$ 
  via $\chi_R(x)=\chi_1\left(\frac{x}{R}\right)$ for $R>0$.
  With $R_0$ from \eqref{equ:PropertyQ3} we then write
  \begin{align*}
    Q(x) = Q_{\mathrm{s}}(x) + Q_{\mathrm{c}}(x),\quad Q_{\mathrm{s}}(x):=(1-\chi_{R_0}(x))Q(x),\quad  Q_{\mathrm{c}}(x):=\chi_{R_0}(x)Q(x)
      \end{align*}
      and define $\L_{\mathrm{s}}$ as in \eqref{equ:Ls}.
  Then  $Q_{\mathrm{c}}$ has compact support in $B_{2R_0}(0)$ and $Q_{\mathrm{s}}$ satisfies due to \eqref{equ:PropertyQ3} 
  \begin{equation}
    \label{equ:BoundQs}
    \left\|Q_{\mathrm{s}}\right\|_{L^{\infty}}
    \leqslant \left\|1-\chi_{R_0}\right\|_{\infty} \sup_{|x|\geqslant R_0}|Q(x)|
    \leqslant \frac{\varepsilon\gamma}{2} \min\left\{\frac{1}{\kappa\aone},\frac{1}{C_{0,\varepsilon}}\right\}.
  \end{equation}
  Setting $\widetilde{\L}_{\mathrm{s}}:=\L_{\mathrm{s}}-\lambda_2 I$ we can factorize $\lambda I-\L_Q$ as in \eqref{equ:DecompositionLQ_intro},
  \begin{align*}
    \lambda I-\L_Q = \lambda_1 I-(\L_{\mathrm{s}}-\lambda_2 I)-Q_{\mathrm{c}}(\cdot) = \left(I-Q_{\mathrm{c}}(\cdot)(\lambda_1 I-\widetilde{\L}_{\mathrm{s}})^{-1}\right)(\lambda_1 I-\widetilde{\L}_{\mathrm{s}}).
  \end{align*}

  \item[2.] We verify the following two conditions
  \begin{itemize}[leftmargin=1.2cm]\setlength{\itemsep}{0.0cm}
    \item[a)] $\lambda_1 I-\widetilde{\L}_{\mathrm{s}}:(\D^p_{\mathrm{loc}}(\L_0),\left\|\cdot\right\|_{\L_0})\rightarrow(L^p(\R^d,\C^m),\left\|\cdot\right\|_{L^p})$ is a linear homeomorphism,
    \item[b)] $\widetilde{Q}_c(\lambda_1 I-\widetilde{\L}_{\mathrm{s}})^{-1}:(L^p(\R^d,\C^m),\left\|\cdot\right\|_{L^p})\rightarrow(L^p(\R^d,\C^m),\left\|\cdot\right\|_{L^p})$ is a compact operator,
  \end{itemize}
  where $\widetilde{Q}_c$ denotes the operator of multiplication by $Q_{\mathrm{c}}$ which is defined by
  \begin{align}\label{equ:defmultQ}
    \widetilde{Q}_c:(W^{1,p}(\R^d,\C^m),\left\|\cdot\right\|_{W^{1,p}})\rightarrow(L^p(\R^d,\C^m),\left\|\cdot\right\|_{L^p}),\quad [\widetilde{Q}_cv](x)=Q_{\mathrm{c}}(x)v(x).
  \end{align}
  Then the compact perturbation of the identity from \cite[Satz~9.8]{Alt2006}, the product theorem of Fredholm operators from \cite[Thm.~13.1]{TaylorLay1980} imply that both operators
  \begin{equation*}
    I-Q_{\mathrm{c}}(\cdot)(\lambda_1 I-\widetilde{\L}_{\mathrm{s}})^{-1}:(L^p(\R^d,\C^m),\left\|\cdot\right\|_{L^p})\rightarrow (L^p(\R^d,\C^m),\left\|\cdot\right\|_{L^p})
  \end{equation*}
  and $\lambda I -\L_{Q}$ are Fredholm of index $0$. It remains to check whether the conditions a) and b) are satisfied.

  \begin{itemize}[leftmargin=0.43cm]\setlength{\itemsep}{0.1cm}
  \item[a)]  The boundedness of $\lambda_1 I-\widetilde{\L}_{\mathrm{s}}$ 
  with respect to the graph norm $\left\|v\right\|_{\L_0}:=\left\|\L_0 v\right\|_{L^p}+\left\|v\right\|_{L^p}$ follows from
  \begin{equation*}
  \begin{aligned}
    \|(\lambda_1 I-\widetilde{\L}_{\mathrm{s}})v\|_{L^p} \leqslant & \left\|\L_0 v\right\|_{L^p} + \left|\lambda I + B_{\infty}\right| \left\|v\right\|_{L^p}
      + \left\|Q_{\mathrm{s}}\right\|_{L^{\infty}}\left\|v\right\|_{L^p}
    \leqslant C \left\|v\right\|_{\L_0}\quad \forall\,v\in D^p_{\mathrm{loc}}(\L_0).
  \end{aligned}
  \end{equation*}
  The  unique solvability of $(\lambda_1 I-\widetilde{\L}_{\mathrm{s}})v=g$ and
   thus continuity of the inverse by the inverse mapping theorem, is a consequece of
   \cite[Thm.~3.2]{BeynOtten2016a}. Note that 
  \cite[Thm.~3.2]{BeynOtten2016a} is formulated for $(\lambda I-\L_{\mathrm{s}})v=g$ and must be applied to $(\lambda_1 I-\widetilde{\L}_{\mathrm{s}})v=g$, 
  using the shifted data
  \begin{align*}
    (\widetilde{\L}_{\mathrm{s}}=\L_{\mathrm{s}}-\lambda_2 I,\lambda_1=\lambda-\lambda_2,\widetilde{B}_{\infty}=B_{\infty}+\lambda_2 I,\tilde{b}_0=\bzero+\lambda_2=\gamma)\quad\text{instead of}\quad 
    (\L_{\mathrm{s}},\lambda,B_{\infty},\bzero).
  \end{align*}
  The assumptions of \cite[Thm.~3.2]{BeynOtten2016a} are
  $\tilde{b}_0:=-s(-B_{\infty}-\lambda_2 I)>0$ and $\Re\lambda_1\geqslant -(1-\varepsilon)\tilde{b}_0$, which in our case follow from
  \begin{align*}
    &\tilde{b}_0:=-s(-B_{\infty}-\lambda_2 I) = -s(-B_{\infty})-s(-\lambda_2 I) = \bzero + \lambda_2 = \gamma >0,\\
    &\Re\lambda_1 = \Re\lambda - \lambda_2 \geqslant -\bzero + \gamma - \lambda_2 = 0 > -(1-\varepsilon)\gamma = -(1-\varepsilon)\tilde{b}_0.
  \end{align*}

  \item[b)] The operator $\widetilde{Q}_c$ is a linear bounded and compact operator. While linearity and boundedness are clear, compactness 
   follows from Lemma \ref{lem:4.2} with $M=Q_{\mathrm{c}}$. The condition \eqref{equ:4.5} is obviously satisfied 
  since $Q_{\mathrm{c}}$ is continuous and has compact support. As shown in a), $(\lambda_1 I-\widetilde{\L}_{\mathrm{s}})^{-1}:L^p(\R^d,\C^m)\rightarrow\D^p_{\mathrm{loc}}(\L_0)$ 
  is a linear bounded operator with dense range $\D^p_{\mathrm{loc}}(\L_0)\subseteq L^p(\R^d,\C^m)$, hence Fredholm of index $0$. Moreover, a key result from \cite[Sec.~5]{Otten2015a}
  guarantees a continuous imbedding  $\D^p_{\mathrm{loc}}(\L_0)\subseteq W^{1,p}(\R^d,\C^m)$. Hence Lemma \ref{lem:4.2} shows that 
  $\widetilde{Q}_c(\lambda_1 I-\widetilde{\L}_{\mathrm{s}})^{-1}:L^p(\R^d,\C^m)\rightarrow L^p(\R^d,\C^m)$ is compact.
  \end{itemize}
  \end{itemize}
\end{proof}

\begin{remark}
  \begin{itemize}[leftmargin=0.43cm]\setlength{\itemsep}{0.1cm}
  \item[a)] (Decomposition). 
 We note that the factorization \eqref{equ:DecompositionLQ_intro} is a new feature of our approach. Standard factorizations often split off 
a constant coefficient operator rather than $\L_{\mathrm{s}}$.
 For example, in  \cite{BeynLorenz2008} the following factorization is used
  \begin{equation}
    \label{equ:DecompositionLQOLD}
    \lambda I-\L_Q = \left(I-Q(\cdot)(\lambda_1 I-\widetilde{\L}_{\infty})^{-1}\right)(\lambda_1 I-\widetilde{\L}_{\infty})
  \end{equation}
  where $\widetilde{\L}_{\infty}:=\L_{\infty}-\lambda_2 I$ and $\L_{\infty}$ denotes the constant coefficient operator defined by
  \begin{equation*}
    [\L_{\infty} v](x) = A\triangle v(x) + \left\langle Sx,\nabla v(x)\right\rangle - B_{\infty}v(x),\,x\in\R^d.
  \end{equation*}
  For this factorization one only needs a resolvent estimate for
  the constant coefficient operator $\lambda_1 I-\widetilde{\L}_{\infty}$. But
  this must be compensated by the
  stronger assumption $\left|Q(x)\right|\to 0$ as $|x|\to\infty$ (instead of \eqref{equ:PropertyQ3}) in order to apply the compactness 
  from Lemma \ref{lem:4.2} with $M=Q$.
  \item[b)] (Positivity). 
  If we additionally require the positivity assumption \eqref{cond:A9B} in Theorem \ref{thm:4.3}, then one can omit the decomposition of $\lambda$. 
  In this case $\lambda I-\L_Q$ can easily be written as 
  \label{equ:DecompositionLQ}
  \begin{equation*}
    \lambda I-\L_Q = \left(I-Q_{\mathrm{c}}(\cdot)(\lambda I-\L_{\mathrm{s}})^{-1}\right)(\lambda I-\L_{\mathrm{s}}).
  \end{equation*}
  Alternatively, we may factorize $\lambda I-\L_Q$ into
  \begin{equation*}
    \lambda I-\L_Q = \left(I-Q(\cdot)(\lambda I-\L_{\infty})^{-1}\right)(\lambda I-\L_{\infty})
  \end{equation*}
  provided $Q$ satisfies $\left|Q(x)\right|\to 0$ as $|x|\to\infty$.
  \end{itemize}
\end{remark}
For reasons of completeness we formulate a result, analogous
to Theorem \ref{thm:4.3}, based on the factorization \eqref{equ:DecompositionLQOLD}.

\begin{corollary}\label{cor:4.4}
  Let the assumptions \eqref{cond:A4DC}, \eqref{cond:A5}, \eqref{cond:A8B} and 
  \begin{equation}
    \label{equ:PropertyQ2}
    Q\in C(\R^d,\K^{m,m})\quad\text{with}\quad \lim_{R\to\infty}\sup_{|x|\geqslant R}\left|Q(x)\right|=0
  \end{equation}
  be satisfied for $\K=\C$ and for some $1<p<\infty$. 
  Moreover, let $\lambda\in\C$ with $\Re\lambda\geqslant-\bzero+\gamma$ for some $\gamma>0$, where $-\bzero=s(-B_{\infty})$ denotes the spectral bound of $-B_{\infty}$.
  Then, the operator
  \begin{align*}
    \lambda I-\L_Q:(\D^p_{\mathrm{loc}}(\L_0),\left\|\cdot\right\|_{\L_0})\rightarrow (L^p(\R^d,\C^m),\left\|\cdot\right\|_{L^p})
  \end{align*}
  is Fredholm of index $0$.
\end{corollary}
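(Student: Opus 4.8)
The plan is to repeat the proof of Theorem \ref{thm:4.3} almost verbatim, using the factorization \eqref{equ:DecompositionLQOLD} in place of \eqref{equ:DecompositionLQ_intro}: it splits off the \emph{constant}-coefficient operator $\L_{\infty}$ rather than the small variable-coefficient perturbation $\L_{\mathrm{s}}$. First I would write $\lambda=\lambda_1+\lambda_2$ with $\lambda_2:=-\bzero+\gamma$ and $\lambda_1:=\lambda-\lambda_2$, set $\widetilde{\L}_{\infty}:=\L_{\infty}-\lambda_2 I$, and factor
\begin{equation*}
  \lambda I-\L_Q=\bigl(I-\widetilde{Q}\,(\lambda_1 I-\widetilde{\L}_{\infty})^{-1}\bigr)(\lambda_1 I-\widetilde{\L}_{\infty}),
\end{equation*}
where $\widetilde{Q}$ is the operator of multiplication by $Q$. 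As in Step~2 of the proof of Theorem \ref{thm:4.3}, it then suffices to verify that (a) $\lambda_1 I-\widetilde{\L}_{\infty}\colon(\D^p_{\mathrm{loc}}(\L_0),\left\|\cdot\right\|_{\L_0})\to(L^p(\R^d,\C^m),\left\|\cdot\right\|_{L^p})$ is a linear homeomorphism and (b) $\widetilde{Q}\,(\lambda_1 I-\widetilde{\L}_{\infty})^{-1}\colon L^p(\R^d,\C^m)\to L^p(\R^d,\C^m)$ is compact; the assertion then follows from the compact perturbation of the identity \cite[Satz~9.8]{Alt2006} together with the product theorem for Fredholm operators \cite[Thm.~13.1]{TaylorLay1980}.

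For (a), boundedness of $\lambda_1 I-\widetilde{\L}_{\infty}$ with respect to the graph norm is immediate from $\left\|(\lambda_1 I-\widetilde{\L}_{\infty})v\right\|_{L^p}\leqslant\left\|\L_0 v\right\|_{L^p}+\left|\lambda I+B_{\infty}\right|\left\|v\right\|_{L^p}$. Unique solvability of $(\lambda_1 I-\widetilde{\L}_{\infty})v=g$ in $\D^p_{\mathrm{loc}}(\L_0)$, and hence continuity of the inverse by the inverse mapping theorem, follows from \cite[Thm.~3.2]{BeynOtten2016a} applied with $Q_{\mathrm{s}}\equiv 0$ and the shifted data $(\widetilde{B}_{\infty}=B_{\infty}+\lambda_2 I,\ \tilde b_0=\bzero+\lambda_2=\gamma,\ \lambda_1=\lambda-\lambda_2)$; the required hypotheses $\tilde b_0>0$ and $\Re\lambda_1\geqslant-(1-\varepsilon)\tilde b_0$ are checked exactly as in part~a) of the proof of Theorem \ref{thm:4.3} (alternatively, one may invoke the constant-coefficient resolvent estimates of \cite[Sec.~7]{Otten2014}). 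For (b), I would use the continuous imbedding $\D^p_{\mathrm{loc}}(\L_0)\subseteq W^{1,p}(\R^d,\C^m)$ from \cite[Sec.~5]{Otten2015a} (see also \cite[Thm.~5.8~\&~6.8]{Otten2014}), so that $(\lambda_1 I-\widetilde{\L}_{\infty})^{-1}\colon L^p(\R^d,\C^m)\to W^{1,p}(\R^d,\C^m)$ is bounded; since $Q$ satisfies \eqref{equ:PropertyQ2}, Lemma \ref{lem:4.2} with $M=Q$ shows that $\widetilde{Q}\colon W^{1,p}(\R^d,\C^m)\to L^p(\R^d,\C^m)$ is compact, and therefore so is the composition.

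I do not expect a genuine obstacle here: both ingredients — the resolvent estimate for the constant-coefficient operator and the imbedding $\D^p_{\mathrm{loc}}(\L_0)\hookrightarrow W^{1,p}$ — are already available from the cited results. The only conceptual point, and the place where the stronger hypothesis \eqref{equ:PropertyQ2} is used, is that the \emph{entire} coefficient $Q$ is now absorbed into the compact perturbation (so Lemma \ref{lem:4.2} must be applied with $M=Q$ rather than with $M=Q_{\mathrm{c}}$), which is licit precisely because $Q$ itself vanishes at infinity; in exchange one needs a resolvent bound only for $\widetilde{\L}_{\infty}$ and not for the perturbed operator $\widetilde{\L}_{\mathrm{s}}$, so no smallness threshold like \eqref{equ:PropertyQ3} is required.
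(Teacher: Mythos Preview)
Your proposal is correct and follows exactly the approach the paper intends: Corollary \ref{cor:4.4} is stated without proof, but the preceding remark explicitly indicates that it is based on the factorization \eqref{equ:DecompositionLQOLD}, that one only needs a resolvent estimate for the constant-coefficient operator $\lambda_1 I-\widetilde{\L}_{\infty}$, and that the stronger decay assumption \eqref{equ:PropertyQ2} is precisely what allows Lemma \ref{lem:4.2} to be applied with $M=Q$. Your write-up supplies the details of this sketch faithfully.
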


\subsection{The adjoint operator}
\label{subsec:4.3}
In this section we analyze and identify the abstract adoint operator of $\L_Q$. Let us first review some results from 
\cite{Otten2014,Otten2014a,Otten2015a,Otten2015b} for the complex-valued Ornstein-Uhlenbeck operator $\L_0$ in $L^p(\R^d,\C^m)$ and its 
constant coefficient perturbation $\L_{\infty}$.

Assuming \eqref{cond:A2}, \eqref{cond:A5}, \eqref{cond:A8B} for $\K=\C$ it is shown in \cite[Thm.~4.4]{Otten2014}, \cite[Thm.~3.1]{Otten2014a} 
that the function $H_{\infty}:\R^d\times\R^d\times(0,\infty)\rightarrow\C^{m,m}$ defined by
\begin{align}
  \label{equ:HeatKernel}
  H_{\infty}(x,\xi,t)=(4\pi t A)^{-\frac{d}{2}}\exp\left(-B_{\infty}t-(4tA)^{-1}\left|e^{tS}x-\xi\right|^2\right),
\end{align}
is a heat kernel of the perturbed Ornstein-Uhlenbeck operator $\L_{\infty}$, cf. \eqref{equ:Linfty}. Under the same assumptions it is 
proved in \cite[Thm.~5.3]{Otten2014a} that the family of mappings $T_{\infty}(t):L^p(\R^d,\C^m)\rightarrow L^p(\R^d,\C^m)$, $t\geqslant 0$,
\begin{align}
  \left[T_{\infty}(t)v\right](x):= \begin{cases}
                              \int_{\R^d}H_{\infty}(x,\xi,t)v(\xi)d\xi &\text{, }t>0 \\
                              v(x) &\text{, }t=0
                            \end{cases}\quad ,\,x\in\R^d,
  \label{equ:OrnsteinUhlenbeckSemigroupLp}
\end{align}
defines a strongly continuous semigroup for each $1\leqslant p<\infty$. The semigroup $\left(T_{\infty}(t)\right)_{t\geqslant 0}$ is called an 
\begriff{Ornstein-Uhlenbeck semigroup}. The infinitesimal generator $\A_p:L^p(\R^d,\C^m)\supseteq\D(\A_p)\rightarrow L^p(\R^d,\C^m)$ of the 
semigroup $\left(T_{\infty}(t)\right)_{t\geqslant 0}$ has domain of definition
\begin{align*}
  \D(\A_p):=\left\{v\in L^p(\R^d,\C^m)\mid \A_p v :=\lim_{t\downarrow 0}t^{-1}(T_{\infty}(t)v-v)\text{ exists in $L^p(\R^d,\C^m)$}\right\},
\end{align*}
and satisfies resolvent estimates, see \cite[Cor.~6.7]{Otten2014}, \cite[Cor.~5.5]{Otten2014a}. The identification problem requires to
represent the maximal domain $\D(\A_p)$ in terms of Sobolev spaces,  and to show that the generator $\A_p$ and the differential operator 
$\L_{\infty}$ conincide on this domain. This problem is solved in \cite{Otten2015a}. Assuming \eqref{cond:A4DC}, \eqref{cond:A5}, \eqref{cond:A8B} 
for $\K=\C$ and for some $1<p<\infty$, it is shown in \cite[Thm.~5.1]{Otten2015a} that
\begin{align}
  \D(\A_p) = \D^p_{\mathrm{loc}}(\L_0)\quad\text{and}\quad \A_p v=\L_{\infty}v\text{ for all }v\in\D(\A_p).
\end{align}
Moreover, in \cite[Thm.~5.7]{Otten2014a} a-priori estimates are used to show $\D^p_{\mathrm{loc}}(\L_0)\subseteq W^{1,p}(\R^d,\C^m)$.

Next consider the variable coefficient operator $\L_Q$ and assume \eqref{cond:A2}, \eqref{cond:A5}, \eqref{cond:A8B}.
For $Q\in L^{\infty}(\R^d,\C^{m,m})$ let $\widetilde{Q}$ denote the multiplication operator in $L^p(\R^d,\C^m)$ as in \eqref{equ:defmultQ} 
and apply the bounded perturbation theorem \cite[III.1.3]{EngelNagel2000} to conclude that $\B_p := \A_p + \widetilde{Q}$ with $\D(\B_p):=\D(\A_p)$
generates a strongly continuous semigroup $(T_Q(t))_{t\geqslant 0}$ in $L^p(\R^d,\C^m)$. If we restrict $1<p<\infty$ and assume the stronger assumption 
\eqref{cond:A4DC} (or equivalently \eqref{cond:A4}) instead of \eqref{cond:A2}, an application of \cite[Thm.~5.1]{Otten2015a} solves the identification 
problem for $\B_p$, namely $\D(\B_p) = \D^p_{\mathrm{loc}}(\L_0)$ and $\B_p v = \L_{\infty}v + Qv = \L_Q v$ for all $v\in\D(\B_p)$. In particular, we 
obtain from \cite[Thm.~5.7]{Otten2014a} that $\D^p_{\mathrm{loc}}(\L_0)\subseteq W^{1,p}(\R^d,\C^m)$.

In the following we continue the process of identification for the
adjoint differential operator and relate it to the abstract definition.  
\begin{definition}[Adjoint operator]\label{equ:DefAdj}
  Let $X,Y$ be Banach spaces over $\C$ with dual spaces
  $X^{*},Y^{*}$ and duality pairings
  $\langle\cdot,\cdot\rangle_Y:Y^{*}\times Y\rightarrow\C$ and 
  $\langle\cdot,\cdot\rangle_X:X^{*}\times X\rightarrow\C$. For a densely
  defined operator $\A:X\supseteq\D(\A)\rightarrow Y$ the
   \begriff{abstract adjoint operator} $\A^{*}:Y^{*}\supseteq\D(\A^{*})\rightarrow X^{*}$ is defined by
    \begin{equation*}
      \D(\A^{*}) = \left\{y^{*}\in Y^*\mid\;\exists\,x^{*}\in X^*:\;\langle y^{*},\A x\rangle_Y = \langle x^*,x\rangle_X\;\forall\,x\in\D(\A)\right\},\quad \A^{*} y^*:=x^*.
    \end{equation*}
\end{definition}

Let us assume \eqref{cond:A4DC}, \eqref{cond:A5}, \eqref{cond:A8B} for $\K=\C$ and some $1<p<\infty$, and let $Q\in C(\R^d,\C^{m,m})$. Then we apply  
Definition \ref{equ:DefAdj} to the infinitesimal generator $\A=\B_p$ using the setting
\begin{equation} \label{equ:setspaces}
\begin{aligned}
  X=Y=L^p(\R^d,\C^m),\quad X^{*}=Y^{*}=L^q(\R^d,\C^m),\quad 1<p,q<\infty,\quad \frac{1}{p}+\frac{1}{q}=1,\\ 
  \langle w,v\rangle_{q,p}=\langle w,v\rangle_X=\langle w,v\rangle_Y=\int_{\R^d}w(x)^{\herm} v(x) dx,\quad w\in L^q,\quad v\in L^p.
\end{aligned}
\end{equation}
The abstract adjoint operator $\A^{*}=\B_p^{*}$ has maximal domain
\begin{equation}
  \label{equ:4.13}
  \D(\B_p^*) = \left\{v\in L^q(\R^d,\C^m)\mid\; \exists\,w\in L^q(\R^d,\C^m):\;\langle v,\L_Q u\rangle_{q,p}=\langle w,u\rangle_{q,p}\;\forall\,u\in\D^p_{\mathrm{loc}}(\L_0)\right\},
\end{equation}
and is defined through
\begin{equation}
  \label{equ:4.12}
  \B_p^{*}:\big(\D(\B_p^{*}),\left\|\cdot\right\|_{\B_p^*}\big)\rightarrow \left(L^q(\R^d,\C^m),\left\|\cdot\right\|_{L^q}\right),\quad \B_p^*v:=w,\quad \text{$w$ from \eqref{equ:4.13}}.
\end{equation}

Note that the element $w\in L^q(\R^d,\C^m)$ from \eqref{equ:4.13} is uniquely determined.
We compare this with the \begriff{formal adjoint (differential) operator}
 $ \L_Q^{*}:\big(\D^q_{\mathrm{loc}}(\L_0^*),\left\|\cdot\right\|_{\L_0^*}\big)\rightarrow \left(L^q(\R^d,\C^m),\left\|\cdot\right\|_{L^q}\right)$,
defined by
\begin{equation}
  \label{equ:4.10}
  [\L_Q^{*} v](x) = A^{\herm}\triangle v(x) - \left\langle Sx,\nabla v(x)\right\rangle - B_{\infty}^{\herm} v(x) + Q(x)^{\herm} v(x),\,x\in\R^d
\end{equation}
on its domain
\begin{equation*}
  \D^q_{\mathrm{loc}}(\L_0^*) = \left\{v\in W^{2,q}_{\mathrm{loc}}(\R^d,\C^m)\cap L^q(\R^d,\C^m):\; \L_0^* v = A^{\herm}\triangle v - \left\langle S\cdot,\nabla v\right\rangle\in L^q(\R^d,\C^m)\right\}, 1<q<\infty.
\end{equation*}
Definition \eqref{equ:4.10} is motivated by the following relation obtained
via integration by parts 
\begin{equation}
  \label{equ:4.11}
  \langle v,\L_Q u\rangle_{q,p} = \langle \L_Q^* v,u\rangle_{q,p}\quad\forall\,u\in\D^p_{\mathrm{loc}}(\L_0)\;\forall\,v\in\D^q_{\mathrm{loc}}(\L_0^*).
\end{equation}
The following result solves the identification problem for the adjoint operator. The proof is based on an application of \cite[Thm.~3.1]{BeynOtten2016a} 
to $(A^{\herm},-S,B_{\infty}^{\herm},Q(x)^{\herm},q=\frac{p}{p-1})$ instead of $(A,S,B_{\infty},Q(x),p)$. This  
requires the matrix $A^{\herm}$ to additonally satisfy the  $L^q$-dissipativity condition \eqref{cond:A4DCq} for the conjugate index $q:=\frac{p}{p-1}$.

\begin{lemma}[Identification of adjoint operator]\label{lem:4.4}
  Let the assumptions \eqref{cond:A4DC}, \eqref{cond:A4DCq}, \eqref{cond:A5}, \eqref{cond:A8B} and $Q\in L^{\infty}(\R^d,\K^{m,m})$ be satisfied for 
  $\K=\C$, for some $1<p<\infty$ and $q=\frac{p}{p-1}$. Then the formal adjoint operator $\L_Q^*$ and the abstract adjoint operator $\B_p^*$ 
  coincide, i.e.
  \begin{equation*}
    \D(\B_p^*) = \D^q_{\mathrm{loc}}(\L_0^*)\quad\text{and}\quad \B_p^*=\L_Q^*.
  \end{equation*}
  In particular, the corresponding graph norms are equivalent.
\end{lemma}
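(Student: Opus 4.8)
The plan is to reduce the identification of the abstract adjoint $\B_p^*$ to the already-solved identification of a \emph{generator} of a strongly continuous semigroup in $L^q$, by recognizing $\L_Q^*$ as such a generator. Concretely, the formal adjoint operator in \eqref{equ:4.10} has exactly the structure $A^{\herm}\triangle v-\langle Sx,\nabla v\rangle-B_{\infty}^{\herm}v+Q(x)^{\herm}v$, i.e. it is a variable-coefficient perturbation of an Ornstein–Uhlenbeck operator with diffusion matrix $A^{\herm}$, drift matrix $-S$, zeroth-order matrix $B_{\infty}^{\herm}$, and perturbation $Q(\cdot)^{\herm}$. The hypotheses are tailored so that the \emph{primal} theory from \cite{Otten2015a} applies to this transformed data: $-S$ is still skew-symmetric (so \eqref{cond:A5} holds), $A^{\herm},B_{\infty}^{\herm}$ are simultaneously diagonalizable via the conjugate of $Y$ (so \eqref{cond:A8B} holds for the triple), and — crucially — $A^{\herm}$ satisfies the $L^q$-dissipativity condition \eqref{cond:A4DCq}, which is precisely the instance of \eqref{cond:A4DC} needed to run the $L^q$-theory. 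Since $Q(\cdot)^{\herm}\in L^{\infty}(\R^d,\C^{m,m})$, the bounded perturbation theorem applies just as for $\B_p$.

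First I would record that, by the $L^q$-version of \cite[Thm.~5.1]{Otten2015a} applied to $(A^{\herm},-S,B_{\infty}^{\herm},q)$ together with the bounded perturbation theorem \cite[III.1.3]{EngelNagel2000} with the bounded perturbation $\widetilde{Q^{\herm}}$, the operator $\L_Q^{*}$ with domain $\D^q_{\mathrm{loc}}(\L_0^*)$ is the generator of a strongly continuous semigroup $(T_Q^{*}(t))_{t\ge 0}$ on $L^q(\R^d,\C^m)$; in particular $\L_Q^{*}$ is closed and densely defined on $\D^q_{\mathrm{loc}}(\L_0^*)$, and (again using \cite[Thm.~5.7]{Otten2014a} in its $L^q$ form) $\D^q_{\mathrm{loc}}(\L_0^*)\subseteq W^{1,q}(\R^d,\C^m)$. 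Second, the integration-by-parts identity \eqref{equ:4.11} shows $\D^q_{\mathrm{loc}}(\L_0^*)\subseteq\D(\B_p^*)$ with $\B_p^*v=\L_Q^*v$ there, so $\B_p^*$ is a closed extension of $\L_Q^*$. Third, and this is the heart of the argument, I would show the reverse inclusion $\D(\B_p^*)\subseteq\D^q_{\mathrm{loc}}(\L_0^*)$ by a resolvent/range argument: pick $\lambda_0\in\C$ with $\Re\lambda_0$ large enough that $\lambda_0 I-\L_Q^{*}$ is invertible from $\D^q_{\mathrm{loc}}(\L_0^*)$ onto $L^q$ — this is exactly the unique solvability of $(\lambda I-\L_Q)^{*}\tilde v=g$ provided by \cite[Thm.~3.1]{BeynOtten2016a} applied to $(A^{\herm},-S,B_{\infty}^{\herm},Q(\cdot)^{\herm},q)$ — and simultaneously $\lambda_0\in\rho(\B_p^*)$ since $\lambda_0\in\rho(\B_p)$ for $\Re\lambda_0$ large (the semigroup $(T_Q(t))$ has finite growth bound) and $\sigma(\B_p^*)=\overline{\sigma(\B_p)}$ for the adjoint. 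Given $v\in\D(\B_p^*)$ set $g:=(\lambda_0 I-\B_p^*)v\in L^q$ and let $\tilde v\in\D^q_{\mathrm{loc}}(\L_0^*)$ be the unique solution of $(\lambda_0 I-\L_Q^{*})\tilde v=g$; by \eqref{equ:4.11} we have $(\lambda_0 I-\B_p^*)\tilde v=g$ as well, and injectivity of $\lambda_0 I-\B_p^*$ forces $v=\tilde v\in\D^q_{\mathrm{loc}}(\L_0^*)$. Hence $\D(\B_p^*)=\D^q_{\mathrm{loc}}(\L_0^*)$ and $\B_p^*=\L_Q^*$; equivalence of the graph norms $\|\cdot\|_{\B_p^*}$ and $\|\cdot\|_{\L_0^*}$ follows from the closed graph theorem, or directly from the equivalence of $\|\cdot\|_{\L_0^*}$ with $\|\cdot\|_{W^{2,q}}+\|\langle S\cdot,\nabla\cdot\rangle\|_{L^q}$ noted after \eqref{equ:4.4}.

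The step I expect to be the main obstacle is the careful verification that \cite[Thm.~3.1]{BeynOtten2016a} (and the underlying machinery of \cite{Otten2015a,Otten2014a}) is genuinely applicable to the \emph{transposed} data $(A^{\herm},-S,B_{\infty}^{\herm},Q(\cdot)^{\herm},q)$ — i.e.\ checking that every structural hypothesis used there is symmetric under passing from $p$ to $q$ and from the matrices to their Hermitian transposes. The skew-symmetry of $-S$ and the simultaneous diagonalizability of $(A^{\herm},B_{\infty}^{\herm})$ are immediate, and the only genuinely new ingredient is the dissipativity input, which is why the hypothesis \eqref{cond:A4DCq} (equivalently the $L^q$-antieigenvalue condition \eqref{cond:A4q}, cf.\ \cite{Otten2015b}) is imposed on $A^{\herm}$ in addition to \eqref{cond:A4DC} on $A$; once this is in place, the spectral-bound bookkeeping (that $-B_{\infty}^{\herm}$ has the same spectral abscissa $-\bzero$ as $-B_{\infty}$, so the threshold $\Re\lambda_0>-\bzero$ is unchanged) is routine. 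A secondary technical point to be handled with care is the identity $\sigma(\B_p^*)=\overline{\sigma(\B_p)}$ and the nonemptiness of $\rho(\B_p)$ in a right half-plane, but both are standard for generators of $C_0$-semigroups and require no new estimates beyond the resolvent bounds already cited from \cite{Otten2014,Otten2014a}.
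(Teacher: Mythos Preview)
Your proposal is correct and follows essentially the same route as the paper: the easy inclusion $\D^q_{\mathrm{loc}}(\L_0^*)\subseteq\D(\B_p^*)$ via the integration-by-parts identity \eqref{equ:4.11}, and the reverse inclusion by solving an adjoint resolvent equation $(\bar\lambda I-\L_Q^*)\tilde v=\bar\lambda v-w$ through \cite[Thm.~3.1]{BeynOtten2016a} applied to the transposed data $(A^{\herm},-S,B_{\infty}^{\herm},Q^{\herm},q)$, which is exactly why \eqref{cond:A4DCq} is assumed.

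The only minor difference is in how you finish the reverse inclusion. You conclude $v=\tilde v$ from injectivity of $\lambda_0 I-\B_p^*$, which requires the side argument $\rho(\B_p^*)=\overline{\rho(\B_p)}\neq\emptyset$. The paper instead computes directly $\langle v,(\lambda I-\L_Q)u\rangle=\langle\tilde v,(\lambda I-\L_Q)u\rangle$ for all $u\in\D^p_{\mathrm{loc}}(\L_0)$ and invokes \emph{surjectivity of the primal operator} $\lambda I-\L_Q$ (already established) to deduce $v=\tilde v$. This avoids the detour through abstract adjoint spectral theory, but the difference is cosmetic; both arguments rest on the same solvability result for the adjoint equation.
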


\begin{proof} 
  For the proof we abbreviate $\langle\cdot,\cdot\rangle=\langle\cdot,\cdot\rangle_{q,p}$.
  \begin{itemize}[leftmargin=0.43cm]\setlength{\itemsep}{0.1cm}
  \item $\D^q_{\mathrm{loc}}(\L_0^*)\subseteq\D(\B_p^*)$: Let $v\in\D^q_{\mathrm{loc}}(\L_0^*)$ and choose $w=\L_Q^* v\in L^q(\R^d,\C^m)$, then \eqref{equ:4.11} implies
  \begin{equation*}
    \langle v,\L_Q u\rangle = \langle \L_Q^* v,u\rangle = \langle w,u\rangle\quad\forall\,u\in\D^p_{\mathrm{loc}}(\L_0),
  \end{equation*}
  which yields $v\in\D(\B_p^*)$.
  \item $\D^q_{\mathrm{loc}}(\L_0^*)\supseteq\D(\B_p^*)$: Let $v\in\D(\B_p^*)$ and let $w\in L^q(\R^d,\C^m)$ be defined according to \eqref{equ:4.13}. 
  By an application of \cite[Thm.~3.1]{BeynOtten2016a} we have a unique solution $\tilde{v}\in\D^q_{\mathrm{loc}}(\L_0^*)$ of 
  $(\bar{\lambda} I-\L_Q^*)\tilde{v}=\bar{\lambda} v-w$ in $L^q(\R^d,\C^m)$ for any $\lambda\in\C$ with $\Re\lambda>-\bzero+\kappa\aone\left\|Q\right\|_{L^{\infty}}$. 
  Therefore, from \eqref{equ:4.11} and \eqref{equ:4.13} we obtain
  \begin{equation*}
  \begin{aligned}
    \langle v,(\lambda I-\L_Q)u\rangle
    =& \langle v,\lambda u\rangle - \langle v,\L_Q u\rangle
    = \lambda \langle v,u\rangle - \langle w,u\rangle
    = \langle \bar{\lambda} v-w,u\rangle
    = \langle (\bar{\lambda} I-\L_Q^*)\tilde{v},u\rangle \\
    =& \lambda \langle \tilde{v},u\rangle - \langle \L_Q^*\tilde{v},u\rangle
    = \lambda \langle \tilde{v},u\rangle - \langle\tilde{v},\L_Q u\rangle 
    = \langle \tilde{v},(\lambda I-\L_Q)u\rangle\quad\forall\,u\in\D^p_{\mathrm{loc}}(\L_0).
  \end{aligned}
  \end{equation*}
  Since $\lambda I-\L_Q$ is onto, this implies $v=\tilde{v}\in\D^q_{\mathrm{loc}}(\L_0^*)\subset L^q(\R^d,\C^m)$. 
  \end{itemize}
\end{proof}

\begin{remark}
  \begin{itemize}[leftmargin=0.43cm]\setlength{\itemsep}{0.1cm}
  \item[a)] (Shifted adjoint operator). 
  Consider the shifted operator $\lambda I-\B_p$ and its abstract adjoint operator $(\lambda I-\B_p)^*$ for some $\lambda\in\C$, then Lemma 
  \ref{lem:4.4} directly implies
  \begin{equation}
    \label{equ:4.11a}
    \D((\lambda I-\B_p)^*) = \D^q_{\mathrm{loc}}(\L_0^*)\quad\text{and}\quad (\lambda I-\B_p)^*=(\lambda I-\L_Q)^*=\bar{\lambda}I-\L_Q^*,
  \end{equation}
  i.e. the statement from Lemma \ref{lem:4.4} remains valid when shifting the operator by a complex value $\lambda$. In particular, the domain does 
  not depend on $\lambda$. To prove the generalized version \eqref{equ:4.11a} note that \eqref{cond:A8B} implies $\widetilde{B}_{\infty}:=B_{\infty}+\lambda I$ 
  to be simultaneously diagonalizable which allows us to apply \cite[Thm.~3.1]{BeynOtten2016a} to $\lambda I-\L_Q$. This observation will  
  be crucial in Section \ref{subsec:4.4} below.
  \item[b)] (Second characterization of the domain). 
  We emphasize that according to \cite[Thm.~6.1]{Otten2015a}, the maximal domain $\D^q_{\mathrm{loc}}(\L_0^*)$ of the adjoint operator coincides with
  \begin{equation*}
    \D^q_{\mathrm{max}}(\L_0^*) = \left\{v\in W^{2,q}(\R^d,\C^m):\;\left\langle S\cdot,\nabla v\right\rangle\in L^q(\R^d,\C^m)\right\},\; 1<q<\infty.
  \end{equation*}
   \end{itemize}
\end{remark}

\subsection{Fredholm alternative}
\label{subsec:4.4}
With the abstract operators identified, let us apply the Fredholm alternative to $\lambda I-\L_Q$ and its adjoint. First, consider the abstract
setting of Definition \ref{equ:DefAdj} and let $\A$ be a Fredholm operator of index $0$ with adjoint $\A^*:Y^*\supseteq\D(\A^*)\rightarrow X^*$.
Then the \begriff{Fredholm alternative} states:
\begin{itemize}[leftmargin=0.43cm]\setlength{\itemsep}{0.1cm}
  \item \textbf{either} the homogeneous equations
        \begin{equation*}
          \A x=0\quad\text{and}\quad \A^*x^*=0
        \end{equation*}
        have only the trivial solutions $x=0\in\D(\A)$ and $x^*=0\in\D(\A^*)$,
        in which case the inhomogeneous equations
        \begin{equation*}
          \A x=y\quad\text{and}\quad \A^*x^*=y^*
        \end{equation*}
        have unique solutions $x\in\D(\A)$ and $x^*\in\D(\A^*)$ for any $y\in Y$ and $y^*\in X^*$,
  \item \textbf{or} the homogeneous equations
        \begin{equation*}
          \A x=0\quad\text{and}\quad \A^*x^*=0
        \end{equation*}
        have exactly $1\leqslant n:=\mathrm{dim}\,\mathcal{N}(\A)<\infty$ (nontrivial) linearly independent solutions $x_1,\ldots,x_n\in\D(\A)$ and 
        $x^*_1,\ldots,x^*_n\in\D(\A^*)$, in which case the inhomogeneous equation
        \begin{equation*}
          \A x=y,\quad y\in Y
        \end{equation*}
        admits at least one (not necessarily unique) solution $x\in\D(\A)$ if and only if $y\in(\mathcal{N}(\A^*))^{\bot}$.
\end{itemize}
We apply the Fredholm alternative to the operator $\A=\lambda I-\B_p$ and its adjoint $\A^*=(\lambda I-\B_p)^*$ for $\lambda\in\C$ with $\Re\lambda>-\bzero$  
using the setting \eqref{equ:setspaces} and domains $\D(\lambda I-\B_p)=\D^p_{\mathrm{loc}}(\L_0)$, $\D((\lambda I-\B_p)^*)=\D^q_{\mathrm{loc}}(\L_0^*)$.
\begin{lemma}\label{lem:4.5}
  Let the assumptions \eqref{cond:A4DC}, \eqref{cond:A4DCq}, \eqref{cond:A5}, \eqref{cond:A8B} be satisfied for $\K=\C$, for some $1<p<\infty$ and for $q=\frac{p}{p-1}$. 
  Moreover, let $\lambda\in\C$ with $\Re\lambda\geqslant-\bzero+\gamma$ for some $\gamma>0$, let $0<\varepsilon<1$, and let $Q\in C(\R^d,\C^{m,m})$ satisfy \eqref{equ:PropertyQ3}, 
  where $-\bzero=s(-B_{\infty})$ denotes the spectral bound of $-B_{\infty}$.
  Then
  \begin{itemize}[leftmargin=0.43cm]\setlength{\itemsep}{0.1cm}
  \item \textbf{either} the homogeneous equations
        \begin{equation*}
          (\lambda I-\L_Q)v=0\quad\text{and}\quad (\lambda I-\L_Q)^*\psi=0
        \end{equation*}
        have only the trivial solutions $v=0\in\D^p_{\mathrm{loc}}(\L_0)$ and $\psi=0\in\D^q_{\mathrm{loc}}(\L_0^*)$, in which case 
        the inhomogeneous equations
        \begin{equation*}
          (\lambda I-\L_Q)v=h\quad\text{and}\quad (\lambda I-\L_Q)^*\psi=\phi
        \end{equation*}
        have unique solutions $v\in\D^p_{\mathrm{loc}}(\L_0)$ and $\psi\in\D^q_{\mathrm{loc}}(\L_0^*)$ for any $h\in L^p(\R^d,\C^m)$ and $\phi\in L^q(\R^d,\C^m)$.
  \item \textbf{or} the homogeneous equations
        \begin{equation*}
          (\lambda I-\L_Q)v=0\quad\text{and}\quad (\lambda I-\L_Q)^*\psi=0
        \end{equation*}
        have exactly $1\leqslant n:=\mathrm{dim}\,\mathcal{N}(\lambda I-\L_Q)<\infty$ (nontrivial) linearly independent solutions $v_1,\ldots,v_n\in\D^p_{\mathrm{loc}}(\L_0)$ and 
        $\psi_1,\ldots,\psi_n\in\D^q_{\mathrm{loc}}(\L_0^*)$, in which case
        the inhomogeneous equation
        \begin{equation*}
          (\lambda I-\L_Q)v=h,\quad h\in L^p(\R^d,\C^m)
        \end{equation*}
        admits at least one (not necessarily unique) solution $v\in\D^p_{\mathrm{loc}}(\L_0)$ if and only if $h\in(\mathcal{N}((\lambda I-\L_Q)^*))^{\bot}$.
\end{itemize}
\end{lemma}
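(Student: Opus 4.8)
The plan is to assemble three facts that are already available: the Fredholm property of $\lambda I-\L_Q$ from Theorem \ref{thm:4.3}, the identification of its adjoint from Lemma \ref{lem:4.4} (in the shifted form \eqref{equ:4.11a}), and the abstract Fredholm alternative recalled just above. First I would check that the present hypotheses \eqref{cond:A4DC}, \eqref{cond:A5}, \eqref{cond:A8B}, the bound $\Re\lambda\geqslant-\bzero+\gamma$ and the smallness condition \eqref{equ:PropertyQ3} are exactly those of Theorem \ref{thm:4.3}. Hence the closed, densely defined operator $\A:=\lambda I-\L_Q=\lambda I-\B_p:(\D^p_{\mathrm{loc}}(\L_0),\left\|\cdot\right\|_{\L_0})\rightarrow L^p(\R^d,\C^m)$ is Fredholm of index $0$; in particular $n:=\dim\mathcal{N}(\A)=\mathrm{codim}\,\mathcal{R}(\A)<\infty$ and $\mathcal{R}(\A)$ is closed.

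Next I would identify the adjoint. Since, in addition, \eqref{cond:A4DCq} now holds for $q=\tfrac{p}{p-1}$ and $Q\in C(\R^d,\C^{m,m})\subseteq L^{\infty}(\R^d,\C^{m,m})$, Lemma \ref{lem:4.4} together with the shifted version \eqref{equ:4.11a} applies and gives that the abstract adjoint $\A^*=(\lambda I-\B_p)^*$ coincides with the formal adjoint $(\lambda I-\L_Q)^*=\bar\lambda I-\L_Q^*$ from \eqref{equ:4.10}, defined on $\D(\A^*)=\D^q_{\mathrm{loc}}(\L_0^*)$; in particular $\mathcal{N}((\lambda I-\L_Q)^*)\subseteq\D^q_{\mathrm{loc}}(\L_0^*)$, as required in the statement. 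Because $\A$ is Fredholm of index $0$ with closed range in the reflexive spaces of \eqref{equ:setspaces}, the closed range theorem yields $\mathcal{R}(\A)={}^{\perp}\mathcal{N}(\A^*)$ with respect to the pairing $\langle\cdot,\cdot\rangle_{q,p}$, i.e.\ $h\in\mathcal{R}(\A)$ if and only if $\langle\psi,h\rangle_{q,p}=0$ for every $\psi\in\mathcal{N}((\lambda I-\L_Q)^*)$, and moreover $\dim\mathcal{N}(\A^*)=\mathrm{codim}\,\mathcal{R}(\A)=n$ and $\mathrm{codim}\,\mathcal{R}(\A^*)=\dim\mathcal{N}(\A)=n$.

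Finally I would feed these facts into the abstract Fredholm alternative. In the first branch $n=0$: then $\A$ has trivial kernel and, being index $0$ with closed range, is bijective, hence boundedly invertible by the open mapping theorem for closed operators; symmetrically $\A^*$ has trivial kernel and cokernel and is bijective, giving unique solutions $v\in\D^p_{\mathrm{loc}}(\L_0)$, $\psi\in\D^q_{\mathrm{loc}}(\L_0^*)$ of the two inhomogeneous equations for all data. In the second branch $1\leqslant n<\infty$: $\mathcal{N}(\A)$ and $\mathcal{N}(\A^*)$ both have dimension $n$, and solvability of $(\lambda I-\L_Q)v=h$ is equivalent to $h\in\mathcal{R}(\A)=(\mathcal{N}((\lambda I-\L_Q)^*))^{\bot}$, which is precisely the asserted orthogonality condition.

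I do not expect a genuine obstacle here; once Theorem \ref{thm:4.3} and Lemma \ref{lem:4.4} are in place the proof is bookkeeping. The one point that must be handled carefully is consistency of dualities: the annihilator ``$\bot$'' in the statement has to be taken in the pairing $\langle\cdot,\cdot\rangle_{q,p}$ of \eqref{equ:setspaces} under which the adjoint was identified, and the closed range theorem must be invoked in its form for unbounded closed operators between reflexive Banach spaces, so that closedness of $\mathcal{R}(\A)$ is equivalent to closedness of $\mathcal{R}(\A^*)$ and the annihilator identities $\mathcal{R}(\A)={}^{\perp}\mathcal{N}(\A^*)$, $\mathcal{R}(\A^*)=\mathcal{N}(\A)^{\bot}$ hold; this is standard, cf.\ \cite{TaylorLay1980}.
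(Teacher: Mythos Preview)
Your proposal is correct and mirrors the paper's own proof: apply Theorem~\ref{thm:4.3} to get Fredholm index $0$, use Lemma~\ref{lem:4.4} in its shifted form \eqref{equ:4.11a} to identify the abstract adjoint with $(\lambda I-\L_Q)^*$ on $\D^q_{\mathrm{loc}}(\L_0^*)$, and then read off the dichotomy from the abstract Fredholm alternative. One cosmetic point: the inclusion $C(\R^d,\C^{m,m})\subseteq L^{\infty}$ is not literal; you need \eqref{equ:PropertyQ3} together with continuity on the compact ball $\overline{B_{R_0}(0)}$ to conclude $Q\in L^{\infty}$, which you should state explicitly.
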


\begin{proof}
  The assertion follows from Fredholm's alternative applied to $(\A,\D(\A))=(\lambda I-\B_p,\D(\lambda I-\B_p))$ and its adjoint $(\A^*,\D(\A^*))=((\lambda I-\B_p)^*,\D((\lambda I-\B_p)^*))$. For this purpose 
  recall $\D(\lambda I-\B_p)=\D^p_{\mathrm{loc}}(\L_0)$ and $\lambda I-\B_p=\lambda I-\L_Q$ from above as well as
  $\D((\lambda I-\B_p)^*)=\D^q_{\mathrm{loc}}(\L_0^*)$ and $(\lambda I-\B_p)^*=(\lambda I-\L_Q)^*$ from Lemma \ref{lem:4.4}. 
  Finally, Theorem \ref{thm:4.3} shows that $\lambda I-\B_p=\lambda I-\L_Q$ is Fredholm of index $0$.
\end{proof}

\subsection{Exponential decay}
\label{subsec:4.5}
Next we prove that any solution $v$ of $(\lambda I-\L_Q)v=0$ decays exponentially in space. The proof is based on an application of \cite[Thm.~3.5]{BeynOtten2016a}. The result is formulated in terms of radial weight functions  
  \begin{align}
    \label{equ:WeightFunctionsLQ}
    \theta(x,\mu) = \exp\left(\mu\sqrt{|x|^2+1}\right),\;x\in\R^d,\;\mu\in\R
  \end{align}
and the associated \begriff{exponentially weighted Lebesgue} and 
\begriff{Sobolev spaces} for $1 \le p < \infty$ and $k\in \N_0$
\begin{align*}
  L_{\theta}^{p}(\R^d,\K^m)   :=& \{u\in L^1_{\mathrm{loc}}(\R^d,\K^m) : 
\|u\|_{L^p_{\theta}}=\left\| \theta u \right\|_{L^p}<\infty\},
 \\
  W_{\theta}^{k,p}(\R^d,\K^m) :=& \{u\in L^p_{\theta}(\R^d,\K^m):
\|u\|^p_{W_{\theta}^{k,p}}= \sum_{|\beta|\le k} \|D^{\beta}u\|_{L^p_{\theta}}^p < \infty \}.
\end{align*}
The following theorem also uses the constants $\azero,\aone,\amax$ from \eqref{equ:aminamaxazerobzero}, $\gamma_A$ from \eqref{cond:A4DC}, $\delta_A$ from \eqref{cond:A4DCq}, $\bzero,\kappa$ as in Theorem \ref{thm:4.3}, $\beta_{\infty}\in\R$ such that $\Re\langle w,B_{\infty}w\rangle\geqslant\beta_{\infty}|w|^2$ for all $w\in\C^m$, and 
the constant $C_{0,\varepsilon}$ from \cite[Thm.~2.10 with $C_{\theta}=1$]{BeynOtten2016a} for $0<\varepsilon<1$.

\begin{theorem}[A-priori estimates in weighted $L^p$-spaces]\label{thm:APrioriEstimatesInLpRelativelyCompactPerturbation}
  Let the assumptions \eqref{cond:A4DC}, \eqref{cond:A4DCq}, \eqref{cond:A5}, \eqref{cond:A8B} be satisfied for $\K=\C$, for some $1<p<\infty$ and for $q=\frac{p}{p-1}$. 
  Moreover, let $\lambda\in\C$ with $\Re\lambda\geqslant-\bzero+\gamma$ for some $\gamma>0$, let $0<\varepsilon<1$ and let $Q\in C(\R^d,\C^{m,m})$ satisfy
  \begin{align}
    \label{equ:ConditionOnQ}
    \underset{|x|\geqslant R_0}{\esssup}\left|Q(x)\right| \leqslant
    \frac{\varepsilon\gamma}{2} \min\left\{\frac{1}{\kappa\aone},\frac{1}{C_{0,\varepsilon}},\frac{\beta_{\infty}-\bzero}{\gamma}+1\right\}\,\text{for some $R_0>0$}.
  \end{align}
 Consider weight functions $\theta_j(x)=\theta(x,\mu_j),j=1,2,3,4$ 
with exponents $\mu_j$ satisfying
  \begin{align}
    \label{equ:ExponentialRatesLQ}
-\sqrt{\varepsilon\frac{\gamma_A(\beta_{\infty}-\bzero+\gamma)}{2d|A|^2}}\leqslant\mu_1\leqslant 
0\leqslant\mu_2\leqslant\varepsilon\frac{\sqrt{\azero\gamma}}{\amax p},
  \end{align}
  and 
  \begin{align}
    \label{equ:ExponentialRatesLQstart}
    -\sqrt{\varepsilon\frac{\delta_A(\beta_{\infty}-\bzero+\gamma)}{2d|A|^2}}\leqslant\mu_3\leqslant 0\leqslant\mu_4\leqslant\varepsilon\frac{\sqrt{\azero\gamma}}{\amax q}.
  \end{align}
  Then every solution $v\in W^{2,p}_{\mathrm{loc}}(\R^d,\C^m)\cap L^p_{\theta_1}(\R^d,\C^m)$ resp. $\psi\in W^{2,q}_{\mathrm{loc}}(\R^d,\C^m)\cap L^q_{\theta_3}(\R^d,\C^m)$ of
  \begin{equation*}
    (\lambda I-\L_Q)v=g\quad\text{in $L^p_{\mathrm{loc}}(\R^d,\C^m)$}\quad\text{resp.}\quad (\lambda I-\L_Q)^{*}\psi=\phi\quad\text{in $L^q_{\mathrm{loc}}(\R^d,\C^m)$}
  \end{equation*} 
 with $g\in L^p_{\theta_2}(\R^d,\C^m)$ resp. $\phi\in L^q_{\theta_4}(\R^d,\C^m)$ 
satisfies $v\in W^{1,p}_{\theta_2}(\R^d,\C^m)$ resp. $\psi\in W^{1,q}_{\theta_4}(\R^d,\C^m)$. Moreover, the following estimates hold:
  \begin{align}
    \left\|v\right\|_{W^{k,p}_{\theta_2}} \leqslant &
 C_1 \left(\Re\lambda+\bzero\right)^{-\frac{k}{2}}
 \left(\left\|v\right\|_{L^p_{\theta_1}}
   +\left\|g\right\|_{L^p_{\theta_2}}\right), \quad k=0,1, \label{equ:equ:ExpDecStatVstarVariableCoefficientPerturbation}\\
       \left\|\psi\right\|_{W^{k,q}_{\theta_4}} \leqslant& 
C_3 \left(\Re\lambda+\bzero \right)^{-\frac{k}{2}}\left(\left\|\psi\right\|_{L^q_{\theta_3}}
      +\left\|\phi\right\|_{L^q_{\theta_4}}\right), \quad k=0,1. \label{equ:equ:ExpDecStatVstarVariableCoefficientPerturbationAdjoint}
  \end{align}
\end{theorem}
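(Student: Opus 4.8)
The plan is to reduce both estimates to the weighted $L^p$ a-priori estimate \cite[Thm.~3.5]{BeynOtten2016a}, which already upgrades a bare membership ``$v\in L^p_{\theta_1}$'' (with a possibly growing weight, $\mu_1\leqslant 0$) to exponential decay with quantitative control, provided the zeroth-order perturbation is \emph{globally} small. Since our $Q$ is only small outside the ball $B_{R_0}(0)$, the first step is the decomposition already used in the proof of Theorem \ref{thm:4.3}: write $\lambda=\lambda_1+\lambda_2$ with $\lambda_2:=-\bzero+\gamma$, $\lambda_1:=\lambda-\lambda_2$, take the cut-off $\chi_{R_0}$ from that proof, and split $Q=Q_{\mathrm{s}}+Q_{\mathrm{c}}$ with $Q_{\mathrm{s}}:=(1-\chi_{R_0})Q$, $Q_{\mathrm{c}}:=\chi_{R_0}Q$. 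With $\widetilde{B}_{\infty}:=B_{\infty}+\lambda_2 I$ and $\widetilde{\L}_{\mathrm{s}}:=A\triangle+\langle S\,\cdot\,,\nabla\rangle-\widetilde{B}_{\infty}+Q_{\mathrm{s}}(\cdot)$, any solution of $(\lambda I-\L_Q)v=g$ solves $(\lambda_1 I-\widetilde{\L}_{\mathrm{s}})v=g+Q_{\mathrm{c}}(\cdot)v$, and the extra term $Q_{\mathrm{c}}(\cdot)v$ is compactly supported, hence lies in $L^p_{\theta}$ for every admissible weight, with $\|Q_{\mathrm{c}}(\cdot)v\|_{L^p_{\theta_2}}\leqslant C\,\|v\|_{L^p(B_{2R_0}(0))}\leqslant C'\|v\|_{L^p_{\theta_1}}$ for a $\lambda$-independent constant, because $\theta_1$ is bounded from below on $B_{2R_0}(0)$.

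Next I would apply \cite[Thm.~3.5]{BeynOtten2016a} to the shifted data $(A,S,\widetilde{B}_{\infty},Q_{\mathrm{s}},p)$ and the parameter $\lambda_1$, with right-hand side $g+Q_{\mathrm{c}}(\cdot)v\in L^p_{\theta_2}$. Here the hypotheses must be matched carefully: $\widetilde{b}_0:=-s(-\widetilde{B}_{\infty})=\bzero+\lambda_2=\gamma>0$; $\Re\lambda_1=\Re\lambda-\lambda_2\geqslant 0>-(1-\varepsilon)\widetilde{b}_0$; the shifted coercivity constant is $\widetilde{\beta}_{\infty}=\beta_{\infty}+\lambda_2=\beta_{\infty}-\bzero+\gamma$; the smallness bound \eqref{equ:ConditionOnQ} is precisely what is needed for $\|Q_{\mathrm{s}}\|_{L^{\infty}}$ (its third entry being $\widetilde{\beta}_{\infty}/\widetilde{b}_0$); and the exponent windows \eqref{equ:ExponentialRatesLQ} are exactly the admissible ranges of \cite[Thm.~3.5]{BeynOtten2016a} for the shifted data, since $A$ — hence $\azero,\amax,|A|,\gamma_A$ — is unchanged and $\widetilde{b}_0=\gamma$. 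Using $\Re\lambda_1+\widetilde{b}_0=\Re\lambda+\bzero$ this gives $v\in W^{1,p}_{\theta_2}$ and $\|v\|_{W^{k,p}_{\theta_2}}\leqslant C_1(\Re\lambda+\bzero)^{-k/2}(\|v\|_{L^p_{\theta_1}}+\|g+Q_{\mathrm{c}}(\cdot)v\|_{L^p_{\theta_2}})$; absorbing $\|Q_{\mathrm{c}}(\cdot)v\|_{L^p_{\theta_2}}$ into $\|v\|_{L^p_{\theta_1}}$ as above and relabelling the constant yields \eqref{equ:equ:ExpDecStatVstarVariableCoefficientPerturbation}.

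For the adjoint estimate I would first invoke Lemma \ref{lem:4.4} together with \eqref{equ:4.11a} to identify $(\lambda I-\L_Q)^{*}$ with $\bar\lambda I-\L_Q^{*}$, $\L_Q^{*}$ being the formal adjoint from \eqref{equ:4.10}. This is again an operator of the same type, with transposed data $(\widehat{A},\widehat{S},\widehat{B}_{\infty},\widehat{Q})=(A^{\herm},-S,B_{\infty}^{\herm},Q^{\herm})$ and conjugate index $q$: \eqref{cond:A4DCq} furnishes the $L^q$-dissipativity of $\widehat{A}$, \eqref{cond:A4DC} the companion dissipativity of $\widehat{A}^{\herm}=A$, $\widehat{S}$ is skew-symmetric, $A^{\herm},B_{\infty}^{\herm}$ remain simultaneously diagonalizable, and $\rho(\widehat{A})=\amax$, $-s(-\widehat{A})=\azero$, $|\widehat{A}|=|A|$, $-s(-\widehat{B}_{\infty})=\bzero$, $\Re\langle w,\widehat{B}_{\infty}w\rangle=\Re\langle w,B_{\infty}w\rangle$. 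Running the previous two steps verbatim with $(p,\gamma_A,\theta_1,\theta_2)$ replaced by $(q,\delta_A,\theta_3,\theta_4)$ — so that \eqref{equ:ExponentialRatesLQstart} is now the admissible exponent window — produces $\psi\in W^{1,q}_{\theta_4}$ and \eqref{equ:equ:ExpDecStatVstarVariableCoefficientPerturbationAdjoint}.

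No new analytic idea enters: the substantive work — turning ``no exponential growth'' into exponential decay — is carried entirely by \cite[Thm.~3.5]{BeynOtten2016a}. The only real obstacle is the bookkeeping of the reduction: verifying that after the shift by $\lambda_2$ the hypotheses of \cite[Thm.~3.5]{BeynOtten2016a} hold with the \emph{precise} constants of \eqref{equ:ConditionOnQ}, \eqref{equ:ExponentialRatesLQ}, \eqref{equ:ExponentialRatesLQstart} — in particular that the third entry $\tfrac{\beta_{\infty}-\bzero}{\gamma}+1$ of the minimum in \eqref{equ:ConditionOnQ} is exactly the threshold $\widetilde{\beta}_{\infty}/\widetilde{b}_0$ forced by the shifted coercivity; that the compactly supported remainder $Q_{\mathrm{c}}(\cdot)v$ is absorbed with a $\lambda$-independent constant, so that the scaling $(\Re\lambda+\bzero)^{-k/2}$ survives; and that the transposed data for the adjoint still satisfy all of \eqref{cond:A4DC}, \eqref{cond:A4DCq}, \eqref{cond:A5}, \eqref{cond:A8B} while carrying the same geometric constants.
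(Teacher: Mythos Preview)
Your core strategy---shift $\lambda$ by $\lambda_2=-\bzero+\gamma$, verify that the shifted data $(\widetilde{B}_{\infty},\tilde{b}_0=\gamma,\tilde{\beta}_{\infty}=\beta_{\infty}-\bzero+\gamma)$ satisfy the hypotheses of \cite[Thm.~3.5]{BeynOtten2016a}, and then repeat with transposed data for the adjoint---is exactly what the paper does, and your bookkeeping of the constants is correct.

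The one unnecessary detour is your splitting $Q=Q_{\mathrm{s}}+Q_{\mathrm{c}}$. You assume \cite[Thm.~3.5]{BeynOtten2016a} requires the zeroth-order perturbation to be \emph{globally} small, but in fact that theorem already accepts the localized condition $\esssup_{|x|\geqslant R_0}|Q(x)|\leqslant\ldots$ (this is precisely why the paper calls it the ``relatively compact perturbation'' estimate). Hence the paper applies \cite[Thm.~3.5]{BeynOtten2016a} directly to $\widetilde{\L}_Q=\L_Q-\lambda_2 I$ with the full $Q$, checking only the three conditions $\tilde{\beta}_{\infty}>0$, $\Re\lambda_1\geqslant-(1-\varepsilon)\tilde{\beta}_{\infty}$, and \eqref{equ:ConditionOnQ} rewritten in the tilde variables. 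Your route via $Q_{\mathrm{s}}$ and the absorption of $Q_{\mathrm{c}}(\cdot)v$ into $\|v\|_{L^p_{\theta_1}}$ is correct and harmless, just redundant; it essentially reproves inside the present proof a step that \cite[Thm.~3.5]{BeynOtten2016a} already contains.
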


\begin{remark} The constants in \eqref{equ:equ:ExpDecStatVstarVariableCoefficientPerturbation}, \eqref{equ:equ:ExpDecStatVstarVariableCoefficientPerturbationAdjoint} 
are of the form $C_j=C_j(\gamma,|\mu_{j+1}-\mu_j|,R_0,\left\|Q\right\|_{L^{\infty}},\varepsilon)$, $j=1,3$,
and do not depend on $\lambda$, $v$ and $\psi$. Their precise dependence can be traced back to \cite[Thm.~2.10 with $C_{\theta}=1$]{BeynOtten2016a}.
Due to the choice of exponents in \eqref{equ:ExponentialRatesLQ}, \eqref{equ:ExponentialRatesLQstart},
the main effect is to show that solutions of inhomogeneous equations lie in a small space
of exponentially decaying solutions, provided they come from a large space of exponentially
growing solutions and provided the inhomogeneity belongs to the same small space of exponentially
decreasing functions.
\end{remark}

\begin{proof}
  Decompose $\lambda\in\C$ into $\lambda=\lambda_1+\lambda_2$ with $\lambda_2:=-\bzero+\gamma$, $\lambda_1:=\lambda-\lambda_2$, and 
  write $\lambda I-\L_Q = \lambda_1 I-\widetilde{\L}_Q$ with $\widetilde{\L}_Q:=\L_Q-\lambda_2 I$.
  This implies
  \begin{align}
    \label{equ:ModifiedResEq}
    g = (\lambda I-\L_Q)v = (\lambda_1-\widetilde{\L}_Q)v.
  \end{align}
  Introducing the matrix $\widetilde{B}_{\infty}:=B_{\infty}+\lambda_2 I$ and the two quantities $\tilde{b}_0 := \bzero + \lambda_2 = \gamma$, 
  $\tilde{\beta}_{\infty} := \beta_{\infty} + \lambda_2$, which satisfy $0<\tilde{b}_0\leqslant\tilde{\beta}_{\infty}$, we now apply 
  \cite[Thm.~3.5]{BeynOtten2016a} to \eqref{equ:ModifiedResEq} with
  \begin{align*}
    (\widetilde{L}_Q,\lambda_1,\widetilde{B}_{\infty},\tilde{b}_0,\tilde{\beta}_{\infty})\quad\text{instead of}\quad (\L_Q,\lambda,B_{\infty},\bzero,\beta_{\infty}).
  \end{align*}
  For this purpose, one must check the following three properties
  \begin{align*}
    \tilde{\beta}_{\infty}>0,\quad \Re\lambda_1\geqslant -(1-\varepsilon)\tilde{\beta}_{\infty},\quad\text{and}\quad
    \underset{|x|\geqslant R_0}{\esssup}\left|Q(x)\right| \leqslant \frac{\varepsilon}{2}\min\left\{\frac{\tilde{b}_0}{\kappa\aone},\frac{\tilde{b}_0}{C_{0,\varepsilon}},\tilde{\beta}_{\infty}\right\}.
  \end{align*}
  Using $0<\bzero\leqslant\tilde{\beta}_{\infty}$, $0<\varepsilon<1$, $\Re\lambda\geqslant-\bzero+\gamma$, $\lambda=\lambda_1+\lambda_2$ and $\lambda_2=-\bzero+\gamma$, we obtain
  \begin{align*}
    \tilde{\beta}_{\infty} =  \beta_{\infty} - \bzero + \gamma \geqslant \gamma >0,
\qquad
    \Re\lambda_1 = \Re\lambda - \lambda_2 \geqslant  0 \geqslant -(1-\varepsilon)\tilde{\beta}_{\infty}.
  \end{align*} 
  The $Q$-estimate follows from \eqref{equ:ConditionOnQ} using $\gamma=\tilde{b}_0$ and $\beta_{\infty}-\bzero+\gamma=\tilde{\beta}_{\infty}-\tilde{b}_0+\gamma=\tilde{\beta}_{\infty}$. 
  Replacing $(\eqref{cond:A4DC},p,v,g,\mu_1,\mu_2)$ by $(\eqref{cond:A4DCq},q,\psi,\phi,\mu_3,\mu_4)$, the same approach yields the assertion for 
  solutions $\psi$ of the adjoint problem $(\lambda I-\L_Q^*)\psi=\phi$.
\end{proof}

\subsection{Fredholm properties of the linearized operator and exponential decay of eigenfunctions}
\label{subsec:4.6}
We now apply the previous results from Section \ref{sec:4} to
\begin{align*}
  -B_{\infty}=Df(v_{\infty})\quad\text{and}\quad Q(x)=Df(v_{\star}(x))-Df(v_{\infty})
\end{align*}
in which case the linearization $\L$ from \eqref{equ:1.8} coincides with the variable coefficient operator $\L_Q$ from \eqref{equ:1.12a}. 
This allows us to transfer the Fredholm property (Corollary \ref{cor:4.4}), the Fredholm alternative (Lemma \ref{lem:4.5}) and the exponential 
decay (Theorem \ref{thm:APrioriEstimatesInLpRelativelyCompactPerturbation}) to the linearized operator $\L$ and its adjoint $\L^*$. 

Our main Theorem \ref{thm:FredPropLin} generalizes \cite[Lem.~2.4]{BeynLorenz2014} to higher space dimensions $d$ and to the general $L^p$-case. Moreover, it 
provides us exponential decay for the eigenfunctions of the linearized operator $\L$ and for the eigenfunctions of the adjoint operator $\L^*$.
In the following, let $\amax=\rho(A)$ denote the spectral radius of $A$, $-\azero=s(-A)$ the spectral bound of $-A$, $-\bzero=s(Df(v_{\infty}))$ 
the spectral bound of $Df(v_{\infty})$ and let $\beta_{\infty}$ be from \eqref{cond:A10}.

\begin{proof}[Proof (of Theorem \ref{thm:FredPropLin})]
  With $-B_{\infty}=Df(v_{\infty})$ and $Q(x)=Df(v_{\star}(x))-Df(v_{\infty})$ we obtain $\L=\L_Q$.
  \begin{itemize}[leftmargin=0.43cm]\setlength{\itemsep}{0.1cm}
  \item[a)] An application of Theorem \ref{thm:4.3} proves that $\lambda I-\L$ is Fredholm of index $0$ with finite-dimensional kernel and cokernel. In order to apply 
  Theorem \ref{thm:4.3}, note that the assumptions \eqref{cond:A4DC} and \eqref{cond:A5} are directly satisfied, and \eqref{cond:A8B} follows from \eqref{cond:A8}. 
  The property $Q\in C(\R^d,\C^{m,m})$ follows from \eqref{cond:A6} and $v_{\star}\in C^2(\R^d,\R^m)$. Similarly, condition \eqref{equ:PropertyQ3} follows from 
  \eqref{cond:A6} and \eqref{equ:BoundednessConditionForVStar}
  \begin{align*}
               \left|Q(x)\right|
            =& \left|Df(v_{\star}(x))-Df(v_{\infty})\right|
    \leqslant  \int_{0}^{1}\left|D^2 f(v_{\infty}+s\left(v_{\star}(x)-v_{\infty}\right)\right| ds \left|v_{\star}(x)-v_{\infty}\right| \\
      \leqslant& K_1\bigg(\sup_{z\in B_{K_1}(v_{\infty})}\left|D^2 f(z)\right|\bigg)
    \leqslant \frac{\varepsilon\gamma}{2}  \min\left\{\frac{1}{\kappa\aone},\frac{1}{C_{0,\varepsilon}}\right\},
  \end{align*}
  provided we choose $K_1=K_1(A,f,v_{\infty},\gamma,d,p,\varepsilon)>0$ such that
  \begin{align}
    \label{equ:thresholdconstant}
    K_1\bigg(\sup_{z\in B_{K_1}(v_{\infty})}\left|D^2 f(z)\right|\bigg)\leqslant
    \frac{\varepsilon\gamma}{2} \min\left\{\frac{1}{\kappa\aone},\frac{1}{C_{0,\varepsilon}}\right\}.
  \end{align}
  \item[b)] Since $\lambda\in\sigma_{\mathrm{pt}}(\L)$ has geometric multiplicity $n=\mathrm{dim}\,\mathcal{N}(\lambda I-\L)$ for some $n\in\N$, we deduce from 
  Lemma \ref{lem:4.5} (\textbf{or} case) that the homogeneous equations $(\lambda I-\L)v=0$ and $(\lambda I-\L)^*\psi=0$ have exactly $n$ (nontrivial) linearly 
  independent solutions $v_1,\ldots,v_n\in\D^p_{\mathrm{loc}}(\L_0)$ and $\psi_1,\ldots,\psi_n\in\D^q_{\mathrm{loc}}(\L_0^*)$. Further, Lemma \ref{lem:4.5} 
  implies that for any $g\in L^p(\R^d,\C^m)$ the inhomogeneous equation $(\lambda I-\L)v=g$ has at least one (not necessarily unique) solution 
  $v\in\D^p_{\mathrm{loc}}(\L_0)$ if and only if $g\in(\mathcal{N}((\lambda I-\L)^*))^{\bot}$, which corresponds \eqref{equ:OrthCond}. Finally, the estimates from 
  \eqref{equ:EstimateEF} follow from abstract results of Fredholm theory.
  \end{itemize}
\end{proof}

\begin{theorem}[Exponential decay of eigenfunctions]\label{thm:ExpDecEigLin}
  Let all assumptions of Theorem \ref{thm:FredPropLin} a)-b) hold.
  \begin{itemize}[leftmargin=0.43cm]\setlength{\itemsep}{0.1cm}
  \item[a)] (Exponential decay of eigenfunctions in weighted $L^p$-spaces). 
Consider weight functions $\theta_j(x)=\theta(x,\mu_j)$, $j=1,\ldots,4$ 
with exponents that satisfy
 \eqref{equ:ExponentialRatesLQ} and \eqref{equ:ExponentialRatesLQstart}.
   
  Then every classical solution $v\in C^2(\R^d,\C^m)$ and $\psi\in C^2(\R^d,\C^m)$ of the eigenvalue problems 
  \begin{align}
    \label{equ:NonlinearEigenvalueProblemRealFormulation}
    (\lambda I -\L)v = 0\quad\text{and}\quad (\lambda I -\L)^*\psi = 0,
  \end{align}
  such that $v\in L^p_{\theta_1}(\R^d,\C^m)$ and $\psi\in L^q_{\theta_3}(\R^d,\C^m)$ 
  satisfies $v\in W^{1,p}_{\theta_2}(\R^d,\C^m)$ and $\psi\in W^{1,q}_{\theta_4}(\R^d,\C^m)$.
  \item[b)] (Pointwise exponential decay of eigenfunctions). In addition to a),
let $\frac{d}{p}\leqslant 2$, $f\in C^{k}(\R^m,\R^m)$, $v_{\star}\in C^{k+1}(\R^d,\R^m)$ and 
  $v\in C^{k+1}(\R^d,\C^m)$ for some $k\in\N$ with $k\geqslant 2$. Then $v$ belongs to $W^{k,p}_{\theta_2}(\R^d,\C^m)$ and satisfies the pointwise estimate
  \begin{align} \label{equ:ratederiv}
    \left|D^{\alpha}v(x)\right| \leqslant C\exp\left(-\mu_2\sqrt{|x|^2+1}\right),
    \quad x\in\R^d
  \end{align}
  for every exponential decay rate $0\leqslant\mu_2\leqslant\varepsilon\frac{\sqrt{\azero\gamma}}{\amax p}$ and for every multi-index $\alpha\in\N_0^d$ 
  with $|\alpha|<k-\tfrac{d}{p}$. 
  \item[c)] (Pointwise exponential decay of adjoint eigenfunctions). In addition to b), let $\max\{\tfrac{d}{p},\tfrac{d}{q}\}\leqslant 2$ and $\psi\in C^{k+1}(\R^d,\C^m)$. 
  Then $\psi$ belongs to $W^{k,q}_{\theta_4}(\R^d,\C^m)$ and satisfies the pointwise estimate
  \begin{align} \label{equ:ratederivadjoint}
    \left|D^{\alpha}\psi(x)\right| \leqslant C\exp\left(-\mu_4\sqrt{|x|^2+1}\right),
    \quad x\in\R^d
  \end{align}
  for every exponential decay rate $0\leqslant\mu_4\leqslant\varepsilon\frac{\sqrt{\azero\gamma}}{\amax q}$ and for every multi-index $\alpha\in\N_0^d$ 
  with $|\alpha|<k-\tfrac{d}{q}$. 
  \end{itemize}
\end{theorem}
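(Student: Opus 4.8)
The plan is to derive all three parts from the a-priori estimates of Theorem~\ref{thm:APrioriEstimatesInLpRelativelyCompactPerturbation}, applied with $-B_\infty=Df(v_\infty)$ and $Q(x)=Df(v_\star(x))-Df(v_\infty)$, so that $\L=\L_Q$ and $\L^*=\L_Q^*$. For part~a) this is nearly immediate: a classical solution $v\in C^2$ with $v\in L^p_{\theta_1}(\R^d,\C^m)$ lies in $W^{2,p}_{\mathrm{loc}}\cap L^p_{\theta_1}$ and solves $(\lambda I-\L_Q)v=0$ in $L^p_{\mathrm{loc}}$, and likewise $\psi\in L^q_{\theta_3}$ solves $(\lambda I-\L_Q)^*\psi=0$. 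It therefore suffices to verify condition~\eqref{equ:ConditionOnQ} for $Q$, which — exactly as in the proof of Theorem~\ref{thm:FredPropLin}a) — follows from Taylor's theorem,
\begin{equation*}
  \esssup_{|x|\geqslant R_0}|Q(x)| \;\leqslant\; K_1\,\Bigl(\,\sup_{z\in B_{K_1}(v_\infty)}|D^2f(z)|\,\Bigr),
\end{equation*}
once $K_1$ is chosen small enough; shrinking $K_1$ only tightens, hence preserves, the hypotheses inherited from Theorem~\ref{thm:FredPropLin}. With $g=0$ and $\phi=0$, Theorem~\ref{thm:APrioriEstimatesInLpRelativelyCompactPerturbation} then yields $v\in W^{1,p}_{\theta_2}$ and $\psi\in W^{1,q}_{\theta_4}$.

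For part~b) the strategy is a regularity bootstrap obtained by differentiating the eigenvalue equation. Using $\partial_i\langle Sx,\nabla v\rangle=\langle Sx,\nabla\partial_i v\rangle+\sum_j S_{ji}\partial_j v$ and $\partial_i\bigl(Df(v_\star)v\bigr)=Df(v_\star)\partial_i v+D^2f(v_\star)[\partial_i v_\star,v]$ one gets
\begin{equation*}
  (\lambda I-\L)(\partial_i v) \;=\; -\sum_j S_{ji}\partial_j v \;-\; D^2f(v_\star)[\partial_i v_\star,v].
\end{equation*}
By part~a) the right-hand side lies in $L^p_{\theta_2}$: the terms $\partial_j v$ are in $L^p_{\theta_2}$, and since $Dv_\star$ is bounded the map $v\mapsto D^2f(v_\star)[\partial_i v_\star,v]$ is bounded on $L^p_{\theta_2}$. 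As $\partial_i v\in L^p_{\theta_2}\subseteq L^p_{\theta_1}$ (because $\mu_1\leqslant 0\leqslant\mu_2$), a further application of Theorem~\ref{thm:APrioriEstimatesInLpRelativelyCompactPerturbation} gives $\partial_i v\in W^{1,p}_{\theta_2}$. Iterating — at the $m$-th round one differentiates the equation once more, produces lower-order terms already known to be in $L^p_{\theta_2}$, and consumes one further derivative of $f$ and of $v_\star$ — yields after $k-1$ rounds $v\in W^{k,p}_{\theta_2}(\R^d,\C^m)$; this is where the smoothness $f\in C^k$, $v_\star,v\in C^{k+1}$, $k\geqslant 2$, enters. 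Since $|D^\beta\theta_2|\leqslant C_\beta\theta_2$ for every $\beta$, one has $v\in W^{k,p}_{\theta_2}$ if and only if $\theta_2 v\in W^{k,p}(\R^d,\C^m)$, with equivalent norms. The Sobolev embedding $W^{k,p}(\R^d)\hookrightarrow C^{|\alpha|}_{\mathrm b}(\R^d)$, valid for $|\alpha|<k-\tfrac dp$, shows $D^\alpha(\theta_2 v)\in L^\infty$, and an induction on $|\alpha|$ using again $|D^\beta\theta_2|\leqslant C_\beta\theta_2$ converts this into $\theta_2\,D^\alpha v\in L^\infty$, i.e. \eqref{equ:ratederiv}. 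The standing hypothesis $\tfrac dp\leqslant 2$ is what keeps the $L^p$-regularity machinery (already used for $v_\star$ in \cite[Cor.~4.3]{BeynOtten2016a}) available throughout.

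Part~c) repeats this verbatim for the adjoint. With $\L^*v=A^{\herm}\triangle v-\langle Sx,\nabla v\rangle-B_\infty^{\herm}v+Q(\cdot)^{\herm}v$ and $Q^{\herm}=(Df(v_\star)-Df(v_\infty))^{\herm}$, differentiation produces $(\lambda I-\L)^*(\partial_i\psi)$ equal to $\partial_j\psi$-terms with constant coefficients plus a term $\bigl(D^2f(v_\star)[\partial_i v_\star,\cdot]\bigr)^{\herm}\psi$, all in $L^q_{\theta_4}$ by part~a); the adjoint half of Theorem~\ref{thm:APrioriEstimatesInLpRelativelyCompactPerturbation} then gives $\partial_i\psi\in W^{1,q}_{\theta_4}$, and iteration together with the weighted Sobolev embedding with $p$ replaced by $q$ yields $\psi\in W^{k,q}_{\theta_4}$ and \eqref{equ:ratederivadjoint} for $|\alpha|<k-\tfrac dq$ under $\max\{\tfrac dp,\tfrac dq\}\leqslant 2$.

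The main obstacle is the bootstrap in part~b) and its adjoint analogue: one must check at each differentiation step that the newly created inhomogeneity stays in the \emph{small} weighted space $L^p_{\theta_2}$ while the function being estimated still belongs to the \emph{large} space $L^p_{\theta_1}$ — precisely the hypothesis pattern Theorem~\ref{thm:APrioriEstimatesInLpRelativelyCompactPerturbation} is built to exploit. The only quantitative input is that every coefficient produced by differentiating $v_\star$ is bounded, so that multiplication by it is bounded on $L^p_{\theta_2}$; this holds because $v_\star$ and its derivatives decay geometrically by \cite[Cor.~4.3]{BeynOtten2016a}. Careful bookkeeping of how many derivatives of $f$ and $v_\star$ each round consumes is what fixes the smoothness requirements $f\in C^k$, $v_\star,v,\psi\in C^{k+1}$, $k\geqslant 2$.
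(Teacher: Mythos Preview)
Your approach is correct and coincides with the paper's: part~a) follows directly from Theorem~\ref{thm:APrioriEstimatesInLpRelativelyCompactPerturbation} once \eqref{equ:ConditionOnQ} is verified via the Taylor estimate and a sufficiently small choice of $K_1$, and parts~b) and~c) proceed by the differentiation bootstrap plus weighted Sobolev embedding that the paper merely cites as \cite[Thm.~5.1(2)]{BeynOtten2016a}. One harmless slip: differentiating $(\lambda I-\L)v=0$ gives $(\lambda I-\L)(\partial_i v)=+\sum_j S_{ji}\partial_j v + D^2f(v_\star)[\partial_i v_\star,v]$, not minus, but the sign is immaterial for membership in $L^p_{\theta_2}$.
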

\begin{proof}
  As in the proof of Theorem \ref{thm:FredPropLin}. let $-B_{\infty}=Df(v_{\infty})$, $Q(x)=Df(v_{\star}(x))-Df(v_{\infty})$. 
 Assertion a) follows directly from an application of Theorem \ref{thm:APrioriEstimatesInLpRelativelyCompactPerturbation} if 
  $K_1$ from \eqref{equ:thresholdconstant} is chosen such that
  \begin{align*}
    K_1\bigg(\sup_{z\in B_{K_1}(v_{\infty})}\left|D^2 f(z)\right|\bigg)\leqslant
    \frac{\varepsilon\gamma}{2}\min\left\{\frac{1}{\kappa\aone},\frac{1}{C_{0,\varepsilon}},\frac{\beta_{\infty}-\bzero}{\gamma}+1\right\}.
  \end{align*}
  The proof of b) works in quite an analogous fashion as in \cite[Thm.~5.1(2)]{BeynOtten2016a} and will not be repeated here. 
 Similarly, assertion c) follows when applying the theory from \cite{BeynOtten2016a} to the adjoint operator. 
\end{proof}

%
%
\sect{Point spectrum and exponential decay of eigenfunctions}
\label{sec:3}

\subsection{Formal derivation of isolated eigenvalues and eigenfunctions}
\label{subsec:3.1}
In this section we compute the set of eigenvalues $\lambda$ (and associated eigenfunctions $v$) of \eqref{equ:1.9}, which are caused by 
symmetries of the $\SE(d)$-group action and belong to the point spectrum $\sigma_{\mathrm{pt}}(\L)$ of the linearization $\L$ (Theorem \ref{thm:3.1}).

Consider the eigenvalue problem
\begin{align}
  \label{equ:1}
  0 = (\lambda I-\L)v = \lambda v - A\triangle v - (Dv)(Sx) + Df(v_{\star})v,\,x\in\R^d.
\end{align}
Assume an eigenfunction $v$ of the form
\begin{align}
  \label{equ:2}
  v = (D v_{\star})(Ex+b)\quad\quad\text{for some $E\in\C^{d,d}$, $b\in\C^d$, $E^{\top}=-E$, $v_{\star}\in C^3(\R^d,\R^m)$.}
\end{align}
Plugging \eqref{equ:2} into \eqref{equ:1} and using the equalities
\begin{align}
  &\lambda v = (Dv_{\star})(\lambda(Ex+b)),
  \label{equ:3} \\
  &A\triangle v = (D(A\triangle v_{\star}))(Ex+b),
  \label{equ:4} \\
  &(D v)(Sx) = (D((Dv_{\star})(Sx)))(Ex+b) + (Dv_{\star})([E,S]x-Sb),
  \label{equ:5} \\
  &Df(v_{\star})v = (D(f(v_{\star})))(Ex+b),
  \label{equ:6} 
\end{align}
with Lie brackets $[E,S]:=ES-SE$, we obtain
\begin{align}
  \label{equ:7}
  0 = (Dv_{\star})\left((\lambda E-[E,S])x + (\lambda b+Sb)\right) - D\left(A\triangle v_{\star}+(Dv_{\star})(Sx)+f(v_{\star})\right)(Ex+b).
\end{align}
Since $v_{\star}$ satisfies the rotating wave equation
\begin{align}
  \label{equ:8}
  0 = A\triangle v_{\star}+(Dv_{\star})(Sx)+f(v_{\star}),\,x\in\R^d,
\end{align}
the second term in \eqref{equ:7} vanishes and we end up with
\begin{align}
  \label{equ:9}
  0 = (Dv_{\star})\left((\lambda E-[E,S])x + (\lambda b+Sb)\right),\,x\in\R^d.
\end{align}
Comparing coefficients in \eqref{equ:9} yields the finite-dimensional eigenvalue problem
\begin{subequations}
  \label{equ:10}
  \begin{align}
  \lambda E &= [E,S],
  \label{equ:10a} \\
  \lambda b &= -Sb,
  \label{equ:10b}  
  \end{align}
\end{subequations}
which is to be solved for $(\lambda,E,b)$. Since $E$ is required to be skew-symmetric, we expect $\frac{d(d+1)}{2}$ nontrivial solutions. 
If $(\lambda,E)$ is a solution of \eqref{equ:10a}, then $(\lambda,E,0)$ solves \eqref{equ:10}, and
if $(\lambda,b)$ is a solution of \eqref{equ:10b}, then $(\lambda,0,b)$ solves \eqref{equ:10}. The eigenvalue problem \eqref{equ:10a} is treated in 
\cite{BlochIserles2005}, but for completeness we discuss its solution here. 
Since $S$ is skew-symmetric there  
is a unitary matrix $U\in\C^{d,d}$  such that $S=U\Lambda_S U^{\herm}$, 
where $\Lambda_S=\diag(\lambda_1^S,\ldots,\lambda_d^S)$ and $\lambda_1^S,\ldots,\lambda_d^S\in i\R$ denote the eigenvalues of $S$.
In particular, this implies $S^{\top}=\overline{U}\Lambda_S U^{\top}$.
First of all, equation \eqref{equ:10b} has solutions $(\lambda,b)=(-\lambda_l^S,Ue_l)$, so that
 \eqref{equ:10} has solutions $(\lambda,E,b)=(-\lambda_l^S,0,Ue_l)$ for $l=1,\ldots,d$.
Next we solve \eqref{equ:10a}. Multiplying \eqref{equ:10a} from the left by $U^{\herm}$, from the right by $\bar{U}$, defining $\tilde{E}=U^{\herm}E\overline{U}$, 
and using skew-symmetry of $S$ and $\tilde{E}$, we obtain
\begin{align}
  \label{equ:12}
  \lambda\tilde{E} = \lambda U^{\herm}E\overline{U} = U^{\herm}[E,S]\overline{U} = -U^{\herm}\big(E\overline{U}\Lambda_S U^{\top} + U\Lambda_S U^{\herm}E\big)\overline{U} 
  = -\tilde{E}\Lambda_S - \Lambda_S\tilde{E} = \tilde{E}^{\top}\Lambda_S - \Lambda_S\tilde{E}.
\end{align}
Equation \eqref{equ:12} has solutions $(\lambda,\tilde{E})=(-(\lambda_i^S+\lambda_j^S),I_{ij}-I_{ji})$,
where $I_{ij}=e_ie_j^{\top}$ has the entry $1$ in the $i$th row and the $j$th column and is $0$ otherwise. Therefore, equation \eqref{equ:10a} has solutions 
$(\lambda,E)=(-(\lambda_i^S+\lambda_j^S),U(I_{ij}-I_{ji})U^{\top})$, and \eqref{equ:10} has solutions $(\lambda,E,b)=(-(\lambda_i^S+\lambda_j^S),U(I_{ij}-I_{ji})U^{\top},0)$ 
for $i=1,\ldots,d-1$, $j=i+1,\ldots,d$.

Collecting these eigenvalues and using skew-symmetry of $S$ once more, we find that the \begriff{symmetry set}
\begin{align}
  \label{equ:3.12b}
  \sigma_{\mathrm{sym}}(\L) = \sigma(S) \cup \{\lambda_i^S+\lambda_j^S:1\leqslant i<j\leqslant d\}
\end{align}
belongs to  $\sigma_{\mathrm{pt}}(\L)$. The rigorous version of this result is Theorem \ref{thm:3.1b}, which
will be proved in the next section.

\subsection{Point spectrum in $L^p$ and exponential decay of eigenfunctions}
\label{subsec:3.2}


\begin{proof}[Proof (of Theorem \ref{thm:3.1b})]
  We show that any $\lambda\in\sigma_{\mathrm{sym}}(\L)$ belongs to $\sigma(\L)$ and is an isolated eigenvalue of finite multiplicity. Then, we obtain 
  $\lambda\in\sigma_{\mathrm{pt}}(\L)$, hence $\sigma_{\mathrm{sym}}(\L)\subseteq\sigma_{\mathrm{pt}}(\L)$, by the definition of the point spectrum, cf. 
  Definition \ref{def:1.1}d). 
  Let $\lambda\in\sigma_{\mathrm{sym}}(\L)$ and let $E,b$ be given by \eqref{equ:3.2} and \eqref{equ:3.3}. Then $v(x)=(Dv_{\star}(x))(Ex+b)$ is a 
  classical solution of \eqref{equ:1.5} according to Theorem \ref{thm:3.1}. An application of \cite[Cor.~4.1]{BeynOtten2016a} implies $v_{\star}\in W^{3,p}_{\theta}(\R^d,\R^m)$, 
  thus $v\in W^{2,p}(\R^d,\C^m)$ and $\L_0 v\in L^p(\R^d,\C^m)$, and hence $v\in\D^p_{\mathrm{loc}}(\L_0)$ solves \eqref{equ:1.5} in $L^p$. 
  Therefore, $v$ is an eigenfunction of $\L$ in $L^p$ with eigenvalue $\lambda$. From $v\in\D^p_{\mathrm{loc}}(\L_0)$ we further deduce that 
  $\mathcal{N}(\lambda I-\L)\neq\emptyset$, hence $\lambda I-\L$ is not injective and thus $\lambda\in\sigma(\L)$. 
  The spectral stability of $Df(v_{\infty})$ from \eqref{cond:A10} implies that $\sigma_{\mathrm{ess}}(\L)$ lies in the left half-plane $\{\lambda\in\C:\Re\lambda\leqslant -\bzero\}$, 
  cf. Theorem \ref{thm:EssSpecLRW}. Thus,  $\lambda\notin\sigma_{\mathrm{ess}}(\L)$, therefore $\lambda\in\sigma_{\mathrm{pt}}(\L)$. 
  In fact, applying Theorem \ref{thm:FredPropLin}a) yields $\lambda I-\L$ to be Fredholm of index $0$, hence the eigenvalue $\lambda$ is isolated and of finite multiplicity. 
\end{proof}

\begin{remark}
  \begin{itemize}[leftmargin=0.43cm]\setlength{\itemsep}{0.1cm}
  \item[a)] (Rotational term).
  Since $v_{\star}\in C^3(\R^d,\R^m)$, the rotational term
  \begin{align*}
    v(x)=\left\langle Sx,\nabla v_{\star}(x)\right\rangle\in C^2(\R^d,\R^m),\;d\geqslant 2
  \end{align*} 
  is a classical solution of $\L v=0$, i.e. $v$ is an eigenfunction of $\L$ with eigenvalue $\lambda=0$. This can either be shown directly 
  or it can be deduced from Theorem \ref{thm:3.1} with $(\lambda,E,b)=(0,S,0)$, cf. \cite{BeynLorenz2008} for $d=2$.
  \item[b)] (Multiplicities of isolated eigenvalues). 
  Theorem \ref{thm:3.1} gives also information about the multiplicity of the isolated eigenvalues of $\L$. More precisely, for 
  any fixed skew-symmetric $S\in\R^{d,d}$, Theorem \ref{thm:3.1} yields a lower bound for the multiplicities. But note that 
  multiplicities depend on $S$, i.e. varying $S$ may lead to different eigenvalues with different multiplicities.
  \end{itemize}
\end{remark}

Figure \ref{fig:PointSpectrumOfTheLinearization} shows the eigenvalues $\lambda\in\sigma_{\mathrm{sym}}(\L)$ from Theorem 
\ref{thm:3.1} and lower bounds for their corresponding multiplicities for different space dimensions $d=2,3,4,5$. Corresponding to 
\eqref{equ:3.12b}, the eigenvalues $\lambda\in\sigma(S)$ are illustrated by the blue circles, the eigenvalues 
$\lambda\in\left\{\lambda_i+\lambda_j\mid \lambda_i,\lambda_j\in\sigma(S),\,1\leqslant i<j\leqslant d\right\}$ 
are illustrated by the green crosses. The imaginary values to the right of the symbols denote the precise values of eigenvalues 
and the numbers to the left the lower bounds for their corresponding multiplicities. We observe that for space dimension $d$ 
there are $\frac{d(d+1)}{2}$ eigenvalues on the imaginary axis that are caused by the symmetries of the $\SE(d)$-group action.

\begin{example}[Point spectrum of $\L$ for $d=2$]\label{exa:PointSpectrum2D} 
  In case $d=2$ the skew-symmetric matrix $S\in\R^{2,2}$, the diagonal matrix $\Lambda_S\in\C^{2,2}$ and the unitary matrix $U\in\C^{2,2}$, satisfying 
  $S=U\Lambda_S U^{\herm}$, are given by
  \begin{align*}
    S=\begin{pmatrix}0 &S_{12}\\-S_{12}&0\end{pmatrix},\quad
    \Lambda_S=\begin{pmatrix}i\sigma_1&0\\0&-i\sigma_1\end{pmatrix},\quad
    U=\frac{1}{\sqrt{2}}\begin{pmatrix}1 &1\\i&-i\end{pmatrix}
  \end{align*}
  with $\sigma_1=S_{12}$, $k=1$, $\lambda_1^S=i\sigma_1$, $\lambda_2^S=-i\sigma_1$.
  Therefore, using the relation $U(I_{12}-I_{21})U^T=-i(I_{12}-I_{21})$, Theorem \ref{thm:3.1} implies the following eigenvalues and eigenfunctions of $\L$, 
  cf. \cite[Lem.~2.3]{BeynLorenz2008},
  \begin{equation}
    \begin{aligned}
    \label{equ:Eigenfunctions2D}
    \begin{split}
      \lambda_1     &= 0,              &&v_1=D^{(1,2)}v_{\star},\\
      \lambda_{2,3} &= \pm i\sigma_1,  &&v_{2,3}=D_1 v_{\star}\pm i D_2 v_{\star}
    \end{split}
    \end{aligned}
  \end{equation}
\end{example}

\begin{figure}[ht]
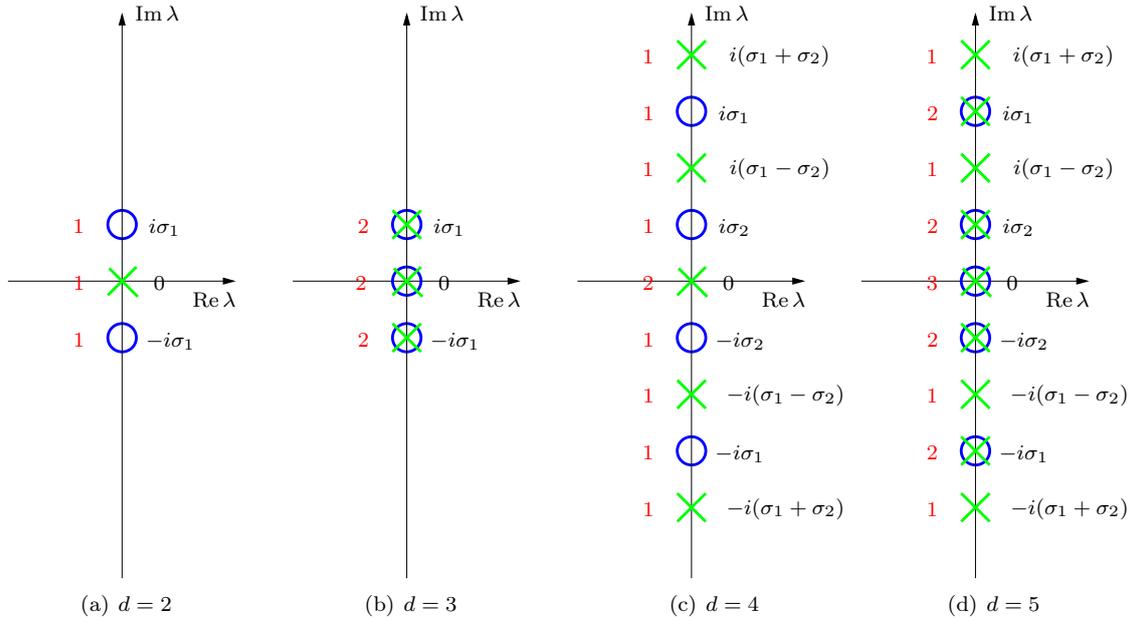

  \centering
  \subfigure[$d=2$]{\includegraphics[page=4, height=7.6cm]{Images.pdf} \label{fig:Pointspectrum_d2}}\hspace*{0.5cm}
  \subfigure[$d=3$]{\includegraphics[page=5, height=7.6cm]{Images.pdf} \label{fig:Pointspectrum_d3}}\hspace*{0.5cm}
  \subfigure[$d=4$]{\includegraphics[page=6, height=7.6cm]{Images.pdf} \label{fig:Pointspectrum_d4}}
  \subfigure[$d=5$]{\includegraphics[page=7, height=7.6cm]{Images.pdf}\label{fig:Pointspectrum_d5}}
  \caption{Point spectrum of the linearization $\L$ on the imaginary axis $i\R$ for space dimensions $d=2,3,4,5$ and $\mathrm{dim}\,\SE(d)=3,6,10,15$ given by Theorem \ref{thm:3.1}.}
  \label{fig:PointSpectrumOfTheLinearization}
\end{figure}

\begin{example}[Point spectrum of $\L$ for $d=3$]\label{exa:PointSpectrum3D} 
  In case $d=3$ the skew-symmetric matrix $S\in\R^{3,3}$, the diagonal matrix $\Lambda_S\in\C^{3,3}$ and the unitary matrix $U\in\C^{3,3}$, satisfying 
  $S=U\Lambda_S U^{\herm}$, are given by
  \begin{align*}
    S=\begin{pmatrix}0 &S_{12} &S_{13} \\ -S_{12} &0 &S_{23} \\ -S_{13} & -S_{23} &0\end{pmatrix},\;
    \Lambda_S=\begin{pmatrix}i\sigma_1&0&0\\0&-i\sigma_1&0\\0&0&0\end{pmatrix},\;
    U= \frac{1}{\sigma_1}\begin{pmatrix}
         \frac{\sigma_1 S_{13}-iS_{12}S_{23}}{\sigma_{12}} 
        &\frac{\sigma_1 S_{13}+iS_{12}S_{23}}{\sigma_{12}}
        &S_{23} \\ 
         \frac{\sigma_1 S_{23}+iS_{12}S_{13}}{\sigma_{12}} 
        &\frac{\sigma_1 S_{23}-iS_{12}S_{13}}{\sigma_{12}}
        &-S_{13} \\ 
         \frac{i(S_{13}^2+S_{23}^2)}{\sigma_{12}} 
        &\frac{-i(S_{13}^2+S_{23}^2)}{\sigma_{12}}
        &S_{12}
      \end{pmatrix},
  \end{align*}
  with $\sigma_1=\sqrt{S_{12}^2+S_{13}^2+S_{23}^2}$, $\sigma_{12}=\sqrt{2(S_{13}^2+S_{23}^2)}$, $k=1$, $\lambda_1^S=i\sigma_1$, $\lambda_2^S=-i\sigma_1$ and $\lambda_3^S=0$.
  Therefore, using the relations
  \begin{align*}
    &U(I_{12}-I_{21})U^T=\frac{i}{\sigma_1}S, \\
    &U(I_{13}-I_{31})U^T=\frac{1}{2 \sigma_1}
                        \begin{pmatrix}
                           0
                          &-\sigma_{12}
                          &\frac{2(S_{12}S_{13}+i\sigma_1 S_{23})}{\sigma_{12}} \\
                           \sigma_{12}
                          &0
                          &-\frac{2(-S_{12}S_{23}+i\sigma_1 S_{13})}{\sigma_{12}} \\
                           -\frac{2(S_{12}S_{13}+i\sigma_1 S_{23})}{\sigma_{12}}
                          &\frac{2(-S_{12}S_{23}+i\sigma_1 S_{13})}{\sigma_{12}}
                          &0
                        \end{pmatrix}, \\
    &U(I_{23}-I_{32})U^T= \frac{1}{2\sigma_1}\begin{pmatrix}
                           0
                          &-\sigma_{12}
                          &-\frac{2(-S_{12}S_{13}+i\sigma_1 S_{23})}{\sigma_{12}} \\
                           \sigma_{12}
                          &0
                          &\frac{2(S_{12}S_{23}+i\sigma_1 S_{13})}{\sigma_{12}} \\
                           \frac{2(-S_{12}S_{13}+i\sigma_1 S_{23})}{\sigma_{12}}
                          &-\frac{2(S_{12}S_{23}+i\sigma_1 S_{13})}{\sigma_{12}}
                          &0
                        \end{pmatrix},
  \end{align*}
  Theorem \ref{thm:3.1} yields the following eigenvalues and eigenfunctions of $\L$, 
  \begin{equation}
  \begin{aligned}
  \label{equ:Eigenfunctions3D}
  \begin{split}
    \lambda_1 &= 0,          &&v_1=S_{12}D^{(1,2)}v_{\star} + S_{13}D^{(1,3)}v_{\star} + S_{23}D^{(2,3)}v_{\star},\\
    \lambda_2 &= 0,          &&v_2=S_{23}D_1 v_{\star} - S_{13}D_2 v_{\star} + S_{12}D_3 v_{\star},\\
    \lambda_{3,4} &= \pm i\sigma_1,  &&v_{3,4}=(\sigma_1 S_{13}\pm iS_{12}S_{23})D_1 v_{\star} 
                                      + (\sigma_1 S_{23}\pm iS_{12}S_{13})D_2 v_{\star} \pm i\frac{\sigma_{12}^2}{2}D_3 v_{\star}, \\
    \lambda_{5,6} &= \pm i\sigma_1,  &&v_{5,6}=-\frac{\sigma_{12}^2}{2}D^{(1,2)}v_{\star}+(S_{12}S_{13}\mp i\sigma_1 S_{23})D^{(1,3)}v_{\star} 
                +(S_{12}S_{23}\pm i\sigma_1 S_{13})D^{(2,3)}v_{\star}.
  \end{split}
  \end{aligned}
  \end{equation}
\end{example}


%
%
\sect{Numerical spectra and eigenfunctions at spinning solitons in the cubic-quintic complex Ginzburg-Landau equation}
\label{sec:5}
Consider the \begriff{cubic-quintic complex Ginzburg-Landau equation (QCGL)}, \cite{GinzburgLandau1950},
\begin{align}
  \label{equ:ComplexQuinticCubicGinzburgLandauEquation}
  u_t = \alpha\triangle u + u\left(\delta + \beta\left|u\right|^2 + \gamma\left|u\right|^4\right)
\end{align}
where $u:\R^d\times[0,\infty)\rightarrow\C$, $d\in\{2,3\}$, $\alpha,\beta,\gamma,\delta\in\C$ with $\Re\alpha>0$ and $f:\C\rightarrow\C$ given by
\begin{align}
  \label{equ:NonlinearityGLComplexVersion}
  f(u) := u\left(\delta + \beta\left|u\right|^2 + \gamma\left|u\right|^4\right).
\end{align}

For the parameters, see \cite{CrasovanMalomedMihalache2001},
\begin{align}
  \label{equ:SpinningSolitonsParameters} 
  \alpha=\frac{1}{2}+\frac{1}{2}i,\quad\beta=\frac{5}{2}+i,\quad\gamma=-1-\frac{1}{10}i,\quad\mu=-\frac{1}{2}
\end{align}
this equation exhibits so called \begriff{spinning soliton} solutions.

\begin{figure}[ht]
  \includegraphics[page=8, width=0.97\textwidth]{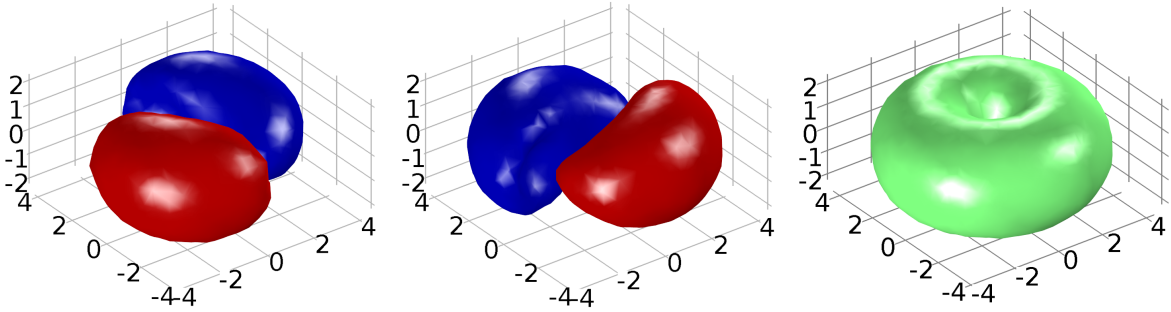}
  \label{fig:QCGL3DProfileSpinningSoliton}
  \caption{Isosurfaces of spinning solitons of QCGL \eqref{equ:ComplexQuinticCubicGinzburgLandauEquation} for parameters \eqref{equ:SpinningSolitonsParameters} and $d=3$: $\Re v_{\star}(x)=\pm0.5$ (left), $\Im v_{\star}(x)=\pm0.5$ (middle), $|v_{\star}(x)|=0.5$ (right).}
\end{figure}

Figure \ref{fig:QCGL3DProfileSpinningSoliton} shows the isosurfaces of $\Re v_{\star}(x)=\pm0.5$ (left), $\Im v_{\star}(x)=\pm0.5$ (middle), and 
$|v_{\star}(x)|=0.5$ (right) of a spinning soliton profile $v_{\star}$ for $d=3$. The rotational velocity matrix $S$ from Example \ref{exa:PointSpectrum3D} takes the values
$(S_{12},S_{13},S_{23}) = (0.6888,-0.0043,-0.0043)$. Therefore, the eigenvalues of $S$ are $\sigma(S)=\{0,\pm i\sigma_1\}$ with $\sigma_1=\sqrt{S_{12}^2+S_{13}^2+S_{23}^2}\approx 0.6888$. 
Moreover, the temporal period $T^{3d}$ for exactly one rotation of the soliton is $T^{3d}=\frac{2\pi}{|\sigma_1|}=9.1216$. The profile $v_{\star}$ and the velocity matrix $S$ 
of the spinning soliton are computed simultaneously by the \begriff{freezing method} from \cite{BeynOttenRottmannMatthes2013,BeynThuemmler2004}. For more detailed information concerning the 
computation of $v_{\star}$ and $S$ we refer to \cite{Otten2014}. To our knowledge there is no explicit formula for spinning soliton solutions of 
\eqref{equ:ComplexQuinticCubicGinzburgLandauEquation}, only implicit formulas and numerical approximations are available. 

The real-valued version of \eqref{equ:ComplexQuinticCubicGinzburgLandauEquation} reads as follows
\begin{align}
  \label{equ:ComplexQuinticCubicGinzburgLandauEquationRealVersion}
  \mathbf{u}_t = \mathbf{A}\triangle\mathbf{u} + \mathbf{f}(\mathbf{u})\quad\text{with}\quad 
  \textbf{A}:= \begin{pmatrix} \alpha_1 & -\alpha_2 \\ \alpha_2 & \alpha_1\end{pmatrix},\quad
  \mathbf{u}=\begin{pmatrix} u_1 \\ u_2 \end{pmatrix}
\end{align}
and $\mathbf{f}:\R^2\rightarrow\R^2$ given by
\begin{align}
  \label{equ:NonlinearityGLRealVersion}
  \mathbf{f}\begin{pmatrix} u_1 \\ u_2 \end{pmatrix} := 
  \begin{pmatrix} \left(u_1\delta_1-u_2\delta_2\right) + \left(u_1\beta_1-u_2\beta_2\right)\left(u_1^2+u_2^2\right) + \left(u_1\gamma_1-u_2\gamma_2\right)\left(u_1^2+u_2^2\right)^2 \\
                  \left(u_1\delta_2+u_2\delta_1\right) + \left(u_1\beta_2+u_2\beta_1\right)\left(u_1^2+u_2^2\right) + \left(u_1\gamma_2+u_2\gamma_1\right)\left(u_1^2+u_2^2\right)^2\end{pmatrix},
\end{align}
where $u=u_1+iu_2$, $\alpha=\alpha_1+i\alpha_2$, $\beta=\beta_1+i\beta_2$, $\gamma=\gamma_1+i\gamma_2$, $\delta=\delta_1+i\delta_2$.

The real-valued formulation \eqref{equ:ComplexQuinticCubicGinzburgLandauEquationRealVersion} of the QCGL \eqref{equ:ComplexQuinticCubicGinzburgLandauEquation} yields the following
constants
\begin{align*}
  \azero=\Re\alpha,\quad \amin=\amax=|\textbf{A}|=|\alpha|,\quad \aone=\left(\frac{|\alpha|}{\Re\alpha}\right)^{\frac{d}{2}},\quad
  \bzero=\beta_{\infty}=-\Re\delta,\quad v_{\infty}=0,
\end{align*}
it satisfies our assumptions \eqref{cond:A1}--\eqref{cond:A10} provided that
\begin{align}
  \label{equ:AssumptionsQCGL}
  \Re\alpha>0,\quad \Re\delta<0,\quad p_{\min}= \frac{2|\alpha|}{|\alpha|+\Re\alpha}<p<\frac{2|\alpha|}{|\alpha|-\Re\alpha}= p_{\max}.
\end{align}
In particular, \eqref{cond:A4} and \eqref{cond:A4q} for $1<p<\infty$ and $q=\frac{p}{p-1}$ lead to the same restriction on $p$, namely
\begin{align*}
  \frac{\Re\alpha}{|\alpha|}=\mu_1(\alpha)=\mu_1(\bar{\alpha}) > \frac{|q-2|}{2}=\frac{|p-2|}{2},
\end{align*}
which is equivalent to the last condition in \eqref{equ:AssumptionsQCGL} and $q_{\min}:=p_{\min}<q<p_{\max}=:q_{\max}$. In particular, if $p$ approaches $p_{\max}$ 
(or $p_{\min}$) then $q$ approaches $p_{\min}$ (or $p_{\max}$). Note that the application of Theorem \ref{thm:EssSpecLRW} and \ref{thm:3.1b} additionally requires 
$\tfrac{d}{p}\leqslant 2$. For the parameter values \eqref{equ:SpinningSolitonsParameters} this allows us to choose $p$ such that
\begin{align}
  \label{equ:PBoundGinzburgLandau}
  1.1716\approx\frac{4}{2+\sqrt{2}}=p_{\min}<p<p_{\max}=\frac{4}{2-\sqrt{2}}\approx 6.8284\quad\text{and}\quad p\geqslant \frac{d}{2},
\end{align}
e.g. $p=2,\ldots,6$. For a more detailed discussion of the assumptions \eqref{cond:A1}--\eqref{cond:A10} we refer to \cite{BeynOtten2016a,Otten2014}. 

Next we study the application of our spectral theorems and compare with
numerical eigenfunction computations.
Transforming the real-valued version \eqref{equ:ComplexQuinticCubicGinzburgLandauEquationRealVersion} into a co-rotating frame, linearizing at 
a rotating wave solution of \eqref{equ:ComplexQuinticCubicGinzburgLandauEquationRealVersion}, and applying temporal Fourier transform leads us 
to the eigenvalue problem $(\lambda I-\L)\mathbf{v}=0$ for the linearized operator
\begin{align*}
  \L \mathbf{v}(x) = \mathbf{A}\triangle\mathbf{v}(x) + \left\langle Sx,\nabla \mathbf{v}(x)\right\rangle + D\mathbf{f}(\mathbf{v_{\star}}(x))\mathbf{v}(x),
\end{align*}
where $D\mathbf{f}(\mathbf{u})$ denotes the Jacobian of $\mathbf{f}:\R^2\rightarrow\R^2$ defined by $(D\mathbf{f}(\mathbf{u}))_{ij}=
\frac{\partial\mathbf{f}_i}{\partial\mathbf{u}_j}(\mathbf{u})$, $i,j\in\{1,2\}$, $u\in\R^2$.
The associated adjoint eigenvalue problem reads as $(\lambda I-\L)^*\mathbf{\psi}=0$ with adjoint linearized operator
\begin{align*}
  \L^* \mathbf{\psi}(x) = \mathbf{A}^T\triangle\mathbf{\psi}(x) - \left\langle Sx,\nabla \mathbf{\psi}(x)\right\rangle + D\mathbf{f}(\mathbf{v_{\star}}(x))^T\mathbf{\psi}(x).
\end{align*}

We solve numerically the eigenvalues problems $(\lambda I-\L)\mathbf{v}=0$ and $(\lambda I-\L)^*\mathbf{\psi}=0$, obtained from the QCGL 
\eqref{equ:ComplexQuinticCubicGinzburgLandauEquationRealVersion}.
 The computations are realized by the CAE software Comsol Multiphysics \cite{ComsolMultiphysics52}. We use 
continuous piecewise linear finite elements with maximal stepsize $\triangle x=0.8$, and homogeneous Neumann boundary conditions to 
compute $\textrm{neig}=800$ eigenvalues which are located near $\sigma=-\bzero$ (measured radially) and satisfy the eigenvalue tolerance 
$\textrm{etol}=10^{-7}$. The profile $v_{\star}$ and the velocity matrix $S$ 
are obtained by simulation. A more detailed discussion 
of how to get these quantities may be found in \cite{Otten2014}.


Figure \ref{fig:QCGL3DSpectrumSpinningSoliton}(a) shows the dispersion set $\sigma_{\mathrm{disp}}(\mathcal{L})$ (red lines) and the symmetry set $\sigma_{\mathrm{sym}}(\mathcal{L})$ 
(blue circles). Both of them belong to the analytical spectrum $\sigma(\L)$ of $\L$. Recall the entries and the spectrum of the velocity matrix $S\in\R^{3,3}$, namely
\begin{align}
  \label{equ:QCGLSpinningSoliton3DEssentialSpectrumParameters}
  (S_{12},S_{13},S_{23}) = (0.6888,-0.0043,-0.0043),\;\; \sigma(S)=\left\{\pm\sigma_1 i\right\},\;\; \sigma_1=\sqrt{S_{12}^2+S_{13}^2+S_{23}^2}=0.6888.
\end{align} 
Therefore, the symmetry set reads as follows, cf. \eqref{equ:3.12b},
\begin{align}
  \label{equ:SymmetrySetQCGL}
  \sigma_{\mathrm{sym}}(\mathcal{L}) = \{0,\pm i\sigma_1\}
\end{align}
and the dispersion set as, cf. \eqref{equ:6.16},
\begin{align}
  \label{equ:DispersionSetQCGL}
  \sigma_{\mathrm{disp}}(\mathcal{L}) = \{\lambda = -\eta^2\alpha_1 + \delta_1 + i(\pm\eta^2\alpha_2 \mp\delta_2 - n\sigma_1): \eta\in\R,\, n\in\Z\}.
\end{align}
As shown in Theorem \ref{thm:3.1}, Example \ref{exa:PointSpectrum3D} and Figure \ref{fig:PointSpectrumOfTheLinearization}(b), the eigenvalues from $\sigma_{\mathrm{sym}}(\mathcal{L})$, caused by the group symmetries of $\SE(3)$, 
lie on the imaginary axis, have (at least) multiplicity $2$, and belong to the point spectrum $\sigma_{\mathrm{pt}}(\L)$ in $L^p$. 
Similarly, as shown in Theorem \ref{thm:EssSpecLRW} and Figure \ref{fig:EssentialSpectrum}(a), the eigenvalues from $\sigma_{\mathrm{disp}}(\mathcal{L})$ belong to the 
essential spectrum $\sigma_{\mathrm{ess}}(\L)$ in $L^p$ and form a zig zag structure consisting of infinitely many copies of cones. The cones open to the left and 
their tips are located at $-\bzero+in\sigma_1$, $n\in\Z$. This is easily  seen from \eqref{equ:DispersionSetQCGL}. Therefore, the distance of two neighboring tips of 
the cones equals $\sigma_1=0.6888$. Theorem \ref{thm:FredPropLin} shows that $\lambda I-\L$ is Fredholm of index $0$, provided that $\Re\lambda$ is located to the right 
of $-\bzero$ (back dashed line). Therefore, there is no essential spectrum
to the right of the line $-\bzero$, i.e. all values $\lambda$ with $\Re\lambda>-\bzero$ either 
belong to the resolvent set $\rho(\L)$ or to the point spectrum $\sigma_{\mathrm{pt}}(\L)$. For the regions enclosed between the black dashed line and the essential spectrum we believe that 
the operator $\lambda I-\L$ is Fredholm of index $0$, but we don't have
a proof. Similarly, the Fredholm index for those $\lambda$ lying in the
 rhombic regions within the dispersion set 
remains an open problem. To conclude, we suggest that in general both sets $\sigma_{\mathrm{pt}}(\L)$ and $\sigma_{\mathrm{ess}}(\L)$ may be larger than 
$\sigma_{\mathrm{sym}}(\mathcal{L})$ and $\sigma_{\mathrm{disp}}(\mathcal{L})$, respectively. Moreover, the spectrum of the adjoint $\L^*$ coincides with the spectrum of $\L$. 

\begin{figure}[ht]
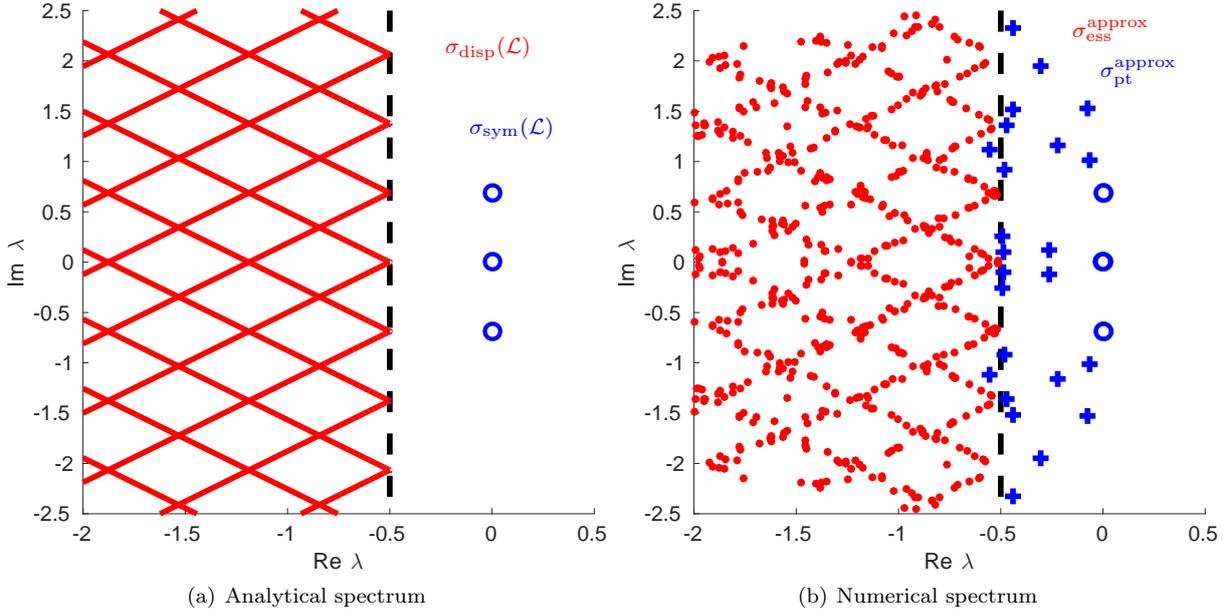

  \centering
  \subfigure[Analytical spectrum]{\includegraphics[page=10, width=0.49\textwidth]{Images.pdf}}
  \subfigure[Numerical spectrum]{\includegraphics[page=9, width=0.49\textwidth]{Images.pdf}}
  \caption{Spectrum of QCGL linearized at a spinnig soliton for parameters \eqref{equ:SpinningSolitonsParameters} and $d=3$.}
  \label{fig:QCGL3DSpectrumSpinningSoliton}
\end{figure}

Figure \ref{fig:QCGL3DSpectrumSpinningSoliton}(b) shows an approximation $\sigma^{\mathrm{approx}}$ of the spectrum $\sigma(\L)$ of $\L$ linearized about the spinning soliton 
$v_{\star}$ for $d=3$. The numerical spectrum $\sigma^{\mathrm{approx}}$ of $\L$ is divided into the approximation of the essential spectrum $\sigma^{\mathrm{approx}}_{\mathrm{ess}}$ 
(red dots) and the approximation of the point spectrum $\sigma^{\mathrm{approx}}_{\mathrm{pt}}$ (blue circles and plus signs). The set $\sigma^{\mathrm{approx}}_{\mathrm{ess}}$ 
is an approximation of $\sigma_{\mathrm{ess}}(\L)$. The result shows that (at least in this case) we can expect $\sigma_{\mathrm{ess}}(\L)=\sigma_{\mathrm{disp}}(\mathcal{L})$. 
Similarly, the set $\sigma^{\mathrm{approx}}_{\mathrm{pt}}$ is an approximation of $\sigma_{\mathrm{pt}}(\L)$, which contains the approximation of $\sigma_{\mathrm{sym}}(\mathcal{L})$ 
(blue circles) and 12 additional complex-conjugate pairs of isolated eigenvalues satisfying $\Re\lambda>-\bzero$. In particular, one of these pairs lie between the black dashed 
line and the essential spectrum. Further computations show that they seem to persist under spatial mesh refinement and also when enlargeing the spatial domain. The case $d=2$ 
is also treated in \cite[Sec.~8]{BeynLorenz2008}.

Let us briefly return to Figure \ref{fig:QCGL3DSpectrumSpinningSoliton}(a) and discuss analytical results on eigenfunctions and their adjoints. In Theorem \ref{thm:3.1} we 
derived explicit formulas for the eigenfunctions associated to eigenvalues from the symmetry set $\sigma_{\mathrm{sym}}(\mathcal{L})$. In case $d=3$, the six eigenfunctions 
are those from Example \ref{exa:PointSpectrum3D}. We briefly recall that each $\lambda$ which is located to the right of $-\bzero$, either belongs to $\rho(\L)$ or to 
$\sigma_{\mathrm{pt}}(\L)$. In case $\lambda\in\sigma_{\mathrm{pt}}(\L)$ with $\Re\lambda>-\bzero$, Theorem \ref{thm:ExpDecEigLin} shows that the associated eigenfunction 
and adjoint eigenfunction decay exponentially in space with exponential decay rates given by, see \eqref{equ:ExponentialRatesLQstart} with $\gamma=\Re\lambda+\bzero$,
\begin{align}
  \label{equ:bound_p} 
  0\leqslant\mu_2\leqslant\varepsilon\frac{\sqrt{\Re\alpha(\Re\lambda-\Re\delta)}}{|\alpha|p}<\frac{\sqrt{\Re\alpha(\Re\lambda-\Re\delta)}}{|\alpha|\max\{p_{\min},\frac{d}{2}\}},
\end{align}
and
\begin{align}
  \label{equ:bound_q}
  0\leqslant\mu_4\leqslant\varepsilon\frac{\sqrt{\Re\alpha(\Re\lambda-\Re\delta)}}{|\alpha|q}<\frac{\sqrt{\Re\alpha(\Re\lambda-\Re\delta)}}{|\alpha|p_{\min}}.
\end{align}
The upper bounds show that the decay rates are affected by the spectral gap $\Re\lambda-\Re\delta$ between the eigenvalue $\lambda\in\sigma_{\mathrm{pt}}(\L)$ and 
the spectral bound $\bzero=-\Re\delta$ (black dashed line) of $D\mathbf{f}(\mathbf{v_{\infty}})$. Therefore, the decay rates are large for eigenvalues farther to 
the right of $\bzero$, and they become smaller as $\Re\lambda$ approaches the spectral bound $\bzero=-\Re\delta$. For the eigenvalues from the symmetry set 
$\sigma_{\mathrm{sym}}(\mathcal{L})$, parameters from \eqref{equ:SpinningSolitonsParameters}, and $d=3$, we obtain the following upper bounds for the exponential decay 
rates of the eigenfunctions and their adjoints
\begin{align*}
  0\leqslant\mu_2<\frac{\sqrt{2}}{3}\approx 0.4714\quad\text{and}\quad 0\leqslant\mu_4<\frac{4}{1+\sqrt{2}}\approx 1.6569.
\end{align*}
Note that the bounds \eqref{equ:bound_p} and \eqref{equ:bound_q} are uniform in $p$ and $q$, since they are computed from the bound $p_{\mathrm{min}}$. It is easily 
seen that the bounds coincide for $p=q=2$. But the bounds change when $p\neq 2\neq q$: For $p\to \max\{p_{\min},\frac{d}{2}\}$ we obtain $q\to\min\{p_{\max},\frac{d}{2}\}$ 
and thus the estimated decay rate of eigenfunction becomes maximal, while the 
estimated decay rate of the adjoint becomes minimal. Conversely, if $p\to p_{\max}$ we obtain $q\to p_{\min}$ and 
thus the decay rate of eigenfunction becomes minimal, while the decay rate of adjoint becomes maximal. 

\begin{figure}[ht]
  \centering
  \includegraphics[page=11, width=0.9\textwidth]{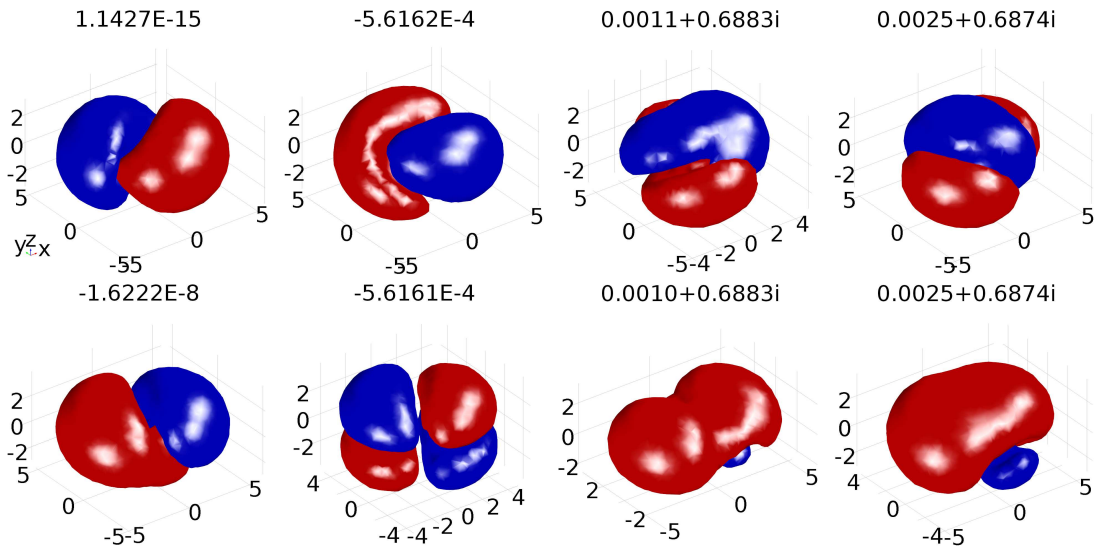}
  \caption{Eigenfunctions and adjoint eigenfunctions of QCGL linearized at a spinning soliton with $d=3$}
  \label{fig:QCGL3DEigenfunctionsSpinningSoliton}
\end{figure}

In Figure \ref{fig:QCGL3DEigenfunctionsSpinningSoliton} we visualize the approximate eigenfunctions $\mathbf{v}:\R^3\rightarrow\C^2$ (upper row) from 
\eqref{equ:Eigenfunctions3D} for  eigenvalues $\lambda_1,\lambda_2,\lambda_3,\lambda_5 \in \sigma_{\mathrm{sym}}(\L)$, and adjoint eigenfunctions $\mathbf{\psi}:\R^3\rightarrow\C^2$ (lower row). 
More precisely, Figure \ref{fig:QCGL3DEigenfunctionsSpinningSoliton} shows the isosurfaces at level values $\{-1.2,1.2\}$ of the real 
parts of their first components, i.e. $\mathbf{v}_j(x)=\mathbf{\psi}_j(x)=-\frac{1}{2}$ (red surface) and $\mathbf{v}_j(x)=\mathbf{\psi}_j(x)=\frac{1}{2}$ (blue surface). 
The approximate eigenvalues from $\sigma_{\mathrm{sym}}(\L)$ are displayed in the title of each subfigure. For a more detailed comparison  of numerical 
and theoretical decay rates of 
eigenfunctions we refer to \cite[Sec.~6.3]{BeynOtten2016a}.


\def\cprime{$'$} \def\cprime{$'$}

\end{document}